\title{Torsion and K-theory for some free wreath products}
\author{Amaury Freslon}
\author{Rub\'{e}n Martos}
\keywords{Quantum groups, K-theory, Baum-Connes conjecture}
\subjclass[2010]{20G42, 46L80, 19K35}
\address{A. Freslon, Laboratoire de Math\'ematiques d'Orsay, Univ. Paris-Sud, CNRS, Universit\'e Paris-Saclay, 91405 Orsay, France}
\address{R. Martos, Univ. Paris Diderot, Sorbonne Paris Cité, IMJ-PRG, UMR 7586 CNRS, Sorbonne Universit\'es, UPMC Univ. Paris 06, F-75013, Paris, France}
\email{amaury.freslon@math.u-psud.fr, ruben.martos@imj-prg.fr}
\date{}
\theoremstyle{plain}
\newtheorem{thm}{Theorem}[section]
\newtheorem{prop}[thm]{Proposition}
\newtheorem{cor}[thm]{Corollary}
\newtheorem{lem}[thm]{Lemma}
\theoremstyle{definition}
\newtheorem{de}[thm]{Definition}
\newtheorem{ex}[thm]{Example}
\theoremstyle{remark}
\newtheorem{rem}[thm]{Remark}
\DeclareMathOperator{\id}{id}
\DeclareMathOperator{\Ind}{Ind}
\DeclareMathOperator{\Irr}{Irr}
\DeclareMathOperator{\Rep}{Rep}
\DeclareMathOperator{\Res}{Res}
\DeclareMathOperator{\Span}{span}
\DeclareMathOperator{\Stab}{Stab}
\newcommand{\C}{\mathbb{C}}
\newcommand{\CC}{\mathcal{C}}
\newcommand{\CI}{\mathcal{CI}}
\newcommand{\D}{\Delta}
\newcommand{\F}{\mathbf{F}}
\newcommand{\G}{\mathbb{G}}
\newcommand{\HH}{\mathbb{H}}
\newcommand{\N}{\mathbb{N}}
\newcommand{\R}{\mathbb{R}}
\newcommand{\T}{\mathcal{T}}
\newcommand{\Z}{\mathbb{Z}}
\begin{document}

\begin{abstract}
We classify torsion actions of free wreath products of arbitrary compact quantum groups by $S_{N}^{+}$ and use this to prove that if $\G$ is a torsion-free compact quantum group satisfying the strong Baum-Connes property, then $\G\wr_{\ast}S_{N}^{+}$ also satisfies the strong Baum-Connes property. We then compute the K-theory of free wreath products of classical and quantum free groups by $SO_{q}(3)$.
\end{abstract}

\maketitle

\section{Introduction}

This paper is concerned with K-theory for C*-algebras associated to discrete quantum groups. Since the fundamental work of M. Pimsner and D.-V. Voiculescu \cite{pimsner1982k} proving that $C^{*}_{r}(\F_{n})\ncong C^{*}_{r}(\F_{m})$ if $n\neq m$ as a corollary of the computation of their $K_{1}$-groups, the computation of K-theory groups of reduced C*-algebra has been an important subject. A very general method to carry these computations by geometric means was developed under the name of \emph{Baum-Connes conjecture} (see \cite{baum1994classifying}). The idea is to build an \emph{assembly map} between the K-theory on the right-hand side and the K-homology of a toplogical space associated to the group on the left-hand side. The conjecture is that this map is an isomorphism. Even though the strongest forms of the conjecture are known to fail \cite{higson2002counterexamples}, it is still a powerful method for a large class of groups, for instance those satisfying the Haagerup property.

If we turn to discrete quantum groups, the very definition of the assembly map and the statement of a proper Baum-Connes conjecture is unclear. D. Goswami and A. Kuku gave in \cite{goswami2003complete} a non-commutative version of the classifying space for proper actions of a discrete quantum group together with an assembly map. However, there is no method available to prove that the assembly map is bijective or to compute the K-homology of the left-hand side. Forgetting the geometric aspects of the problem and focusing on the homological algebra, R. Meyer and R. Nest gave in \cite{meyer2006baum} a version of the Baum-Connes conjecture in terms of the triangulated structure of the equivariant Kasparov category. Using this, they were able to prove it for duals of compact Lie groups.

The first application of this method to genuine quantum groups is due to C. Voigt in \cite{voigt2011baum}, where he proved the Baum-Connes conjecture for free orthogonal quantum groups and computed their K-theory. Later, he adapted with R. Vergnioux ideas of G. Kasparov an G. Skandalis \cite{kasparov1991groups} to prove in \cite{vergnioux2013k} the Baum-Connes conjecture for arbitrary free products of free unitary and orthogonal quantum groups and compute their K-theory. All these cases have an important feature : they are \emph{torsion-free} in a K-theoretic sense. In the presence of torsion, things become more complicated and the only results so far are also due to C. Voigt \cite{voigt2015structure} for quantum automorphism groups of finite-dimensional C*-algebras.

Our goal in the present paper is to investigate the Baum-Connes conjecture for the free wreath product of an arbitrary compact quantum group $\G$ by the quantum permutation group $S_{N}^{+}$, as defined by J. Bichon in \cite{bichon2004free}. This study naturally splits into two parts :
\begin{itemize}
\item The first one is the computation of the torsion of the corresponding discrete quantum groups. This is a difficult problem in general, but Y. Arano and K. De Commer introduced in \cite{arano2015torsion} a powerful method to show that a quantum group is torsion-free. Free wreath products are never torsion-free, but we will see that their method can still give enough information on the torsion to yield a complete classification. Following the ideas of \cite{voigt2015structure}, it is then easy to deduce that $\G\wr_{\ast}S_{N}^{+}$ has the so-called \emph{strong Baum-Connes property} if $\G$ is a torsion-free compact quantum group already satisfying the strong Baum-Connes property.
\item The strong Baum-Connes property means that we have concrete homological tools to compute the K-theory. However, one still has to find an explicit projective resolution of the trivial action, preferably of length one. We show that this can be done for several choices of $\G$ provided that we do not consider $\G\wr_{\ast}S_{N}^{+}$ but the monoidally equivalent quantum group $\G\wr_{\ast}SO_{q}(3)$.
\end{itemize}

Let us end this introduction with a short overview of the organization of the paper. Section \ref{sec:preliminaries} contains some background on compact quantum groups and the formalism of based rings. We then classify in Section \ref{sec:torsion} the torsion actions of any free wreath product $\G\wr_{\ast}S_{N}^{+}$. We also investigate the behaviour of torsion actions under free complexification, enven though this is not needed in the sequel. As a consequence, we prove in Theorem \ref{thm:baumconnestorsionfree} that if $\G$ is torsion-free and has the strong Baum-Connes property, then $\G\wr_{\ast}S_{N}^{+}$ also has the strong Baum-Connes property. Eventually, in Section \ref{sec:ktheory} we compute the K-theory of $\G\wr_{\ast}SO_{q}(3)$ for several quantum groups $\G$.

\subsection*{Acknowledgments}

It is a pleasure to thank Roland Vergnioux for stimulating discussions on the content of Section \ref{sec:ktheory}. The first author also thanks Yuki Arano for fruitful conversations on Section \ref{sec:torsion}. Both authors are grateful to the referees for comments and suggestions which improved the quality of the manuscript.

\section{Preliminaries}\label{sec:preliminaries}

In this section we will recall some basic definitions and facts concerning compact quantum groups and torsion, mainly to fix notations. The reader may refer for instance to \cite{neshveyev2014compact} for details and proofs. A \emph{compact quantum group} $\G$ is given by a C*-algebra $C(\G)$ together with a $*$-homomorphism $\D : C(\G) \rightarrow C(\G)\otimes C(\G)$ satisfying the coassociativity condition $(\D\otimes \id)\circ\D = (\id\otimes \D)\circ\D$ and such that both $(1\otimes C(\G))\Delta(C(\G))$ and $(C(\G)\otimes 1)\Delta(C(\G))$ span a dense subspace of $C(\G)\otimes C(\G)$. Our main focus will be representations of such objects. By the results of \cite{woronowicz1987compact}, these representations are always equivalent to a direct sum of finite-dimensional unitary ones, so that we will only define the latter.

\begin{de}
A \emph{unitary representation of dimension $n$} of $\G$ is a unitary element $u\in M_{n}(\C)\otimes C(\G)$ such that
\begin{equation*}
\D(u_{ij}) = \sum_{k=1}^{n}u_{ik}\otimes u_{kj}.
\end{equation*}
A \emph{morphism} between representations $u$ and $v$ of dimension $n$ and $m$ respectively is a linear map $T : \C^{n}\rightarrow \C^{m}$ such that $(T\otimes \id)u = v(T\otimes \id)$. Two representations are said to be \emph{equivalent} if there is a bijective morphism between them. Eventually a representation is said to be \emph{irreducible} if its only self-intertwiners are scalar multiples of the identity.
\end{de}

As an example, there is always a \emph{trivial representation} $\varepsilon_{\G} = 1\otimes 1\in \C\otimes C(\G)$. The equivalence classes of irreducible finite-dimensional unitary representations of $\G$ form a set denoted by $\Irr(\G)$. One way to gather all the information about the representation theory of $\G$ is to consider its \emph{representation category} $\Rep(\G)$ whose objects are all finite-dimensional representations and morphisms are representation morphisms, which is a \emph{rigid tensor C*-category}. If $F\subset \Irr(\G)$, one can consider the smallest full rigid tensor subcategory $\CC_{F}$ of $\Rep(\G)$ containing $F$. By Woronowicz's Tannaka-Krein duality \cite{woronowicz1988tannaka}, there is then an associated C*-subalgebra $C(\HH)\subset C(\G)$ such that restricting the coproduct to $C(\HH)$ endows it with the structure of a compact quantum group $\HH$. Moreover, $\Rep(\HH)$ naturally identifies with $\CC_{F}$ so that we will say that $\widehat{\HH}$ is the quantum discrete subgroup of $\widehat{\G}$ \emph{generated} by $F$.

Defining torsion (or torsion-freeness) for discrete quantum groups is not straightforward. The definition we use emerged from the developement of an analogue of the Baum-Connes conjecture for quantum groups. This conjecture concerns the equivariant KK-theory of C*-algebras acted upon by $\G$, so that we need a dynamical characterization of torsion, which we recall from \cite{meyer2008homological} (see also \cite{voigt2015structure}).

\begin{de}
A \emph{torsion action} $(A, \alpha)$ of a compact quantum group $\G$ is given by a finite-dimensional C*-algebra $A$ together with an action $\alpha : A\rightarrow A\otimes C(\G)$ which is ergodic, i.e. $\{a\in A, \alpha(a) = a\otimes 1\}=\C.1$.
\end{de}

As an example, if $\gamma\in \Irr(\G)$ and $u^{\gamma}\in B(H_{\gamma})\otimes C(\G)$ is a representative, we can define a torsion action $\alpha_{\gamma}$ on $B(H_{\gamma})$ by setting $\alpha_{\gamma}(T) = (u^{\gamma})^{*}(T\otimes 1)u^{\gamma}$. If $\G$ is the dual of a discrete group $\Gamma$, then all irreducible representations are one-dimensional and the above action is always trivial. For a general quantum group, it is trivial up to equivariant Morita equivalence.

\begin{de}
Let $\G$ be a compact quantum group. A torsion action of $\G$ is said to be \emph{trivial} if it is equivariantly Morita equivalent to the trivial action. If all torsion actions are trivial, then $\G$ is said to be \emph{torsion-free}.
\end{de}

\begin{rem}
If $\G$ is the dual of a discrete group $\Gamma$, then it is torsion-free in the above sense if and only if $\Gamma$ is torsion-free. This comes from the fact that finite-dimensional ergodic coactions of $\Gamma$ always arise from finite subgroups of $\Gamma$, see for instance \cite[Prop 4.2]{voigt2015structure}.
\end{rem}

Classifying torsion actions is not an easy task in general. In \cite{arano2015torsion}, Y. Arano and K. De Commer developed a method to deal with this problem in a combinatorial way, through the notion of \emph{fusion ring}. Since this will be fundamental in the present work, we recall the definitions in detail. The reader may find a complete exposition of the subject in \cite[Chap 3]{etingof2015tensor}, where these objects are called \emph{$\Z_{+}$-rings} but we will rather use the conventions and terminology of \cite{arano2015torsion}. Let $I$ be a set with a distinguished element $\varepsilon$ and an involution $i\mapsto \overline{i}$ fixing $\varepsilon$. A ring structure $\otimes$ on $\Z_{I}$ is given by structure constants $\lambda_{i_{1}, i_{2}}^{i_{3}}$ such that
\begin{equation*}
i_{1}\otimes i_{2} = \sum_{i_{3}\in I}\lambda_{i_{1}, i_{2}}^{i_{3}}i_{3},
\end{equation*}
where all but finitely many terms vanish. Let us write $i_{3}\subset i_{1}\otimes i_{2}$ if $\lambda_{i_{1}, i_{2}}^{i_{3}}\neq 0$.

\begin{de}
A $I$-based ring is a ring structure $\otimes$ on $\Z_{I}$ with positive integer structure constants such that
\begin{itemize}
\item $\overline{i_{1}\otimes i_{2}} = \overline{i_{2}}\otimes \overline{i_{1}}$,
\item $\varepsilon\subset \overline{i_{1}}\otimes i_{2}$ if and only if $i_{1} = i_{2}$.
\end{itemize}
A \emph{dimension function} on a based ring is a unital ring homomorphism $d : (\Z_{I}, \otimes)\rightarrow \R$ such that
\begin{itemize}
\item $d(i) > 0$ for all $i\in I$,
\item $d(\overline{i}) = d(i)$.
\end{itemize}
A based ring endowed with a dimension function is called a \emph{fusion ring}.
\end{de}

Let $\G$ be a compact quantum group, set $I = \Irr(\G)$ and endow it with the involution coming from the conjugation of representations. The tensor product of representations turns $\Z_{I}$ into a based ring and it is a fusion ring when endowed with the dimension function $d$ of representations. This ring will be denoted by $R_{\G}$ in the sequel and it is the only type of fusion ring we will be interested in. The main idea of \cite{arano2015torsion} is that torsion actions give rise to specific modules over the fusion ring of a compact quantum group. As before, if $J$ is a set, then any $\Z_{I}$-module structure on $\Z_{J}$ is given by structure constants $\lambda_{i, j_{1}}^{j_{2}}$ such that
\begin{equation*}
i\otimes j_{1} = \sum_{j_{2}\in J}\lambda_{i, j_{1}}^{j_{2}}j_{2},
\end{equation*}
where again all but finitely many terms vanish. We will write $j_{2}\subset i\otimes j_{1}$ if $\lambda_{ij_{1}}^{j_{2}}\neq 0$.

\begin{de}
A \emph{$J$-based module} is a $(\Z_{I}, \otimes)$-module structure on $\Z_{J}$ with positive integer structure constants such that $j_{1}\subset i\otimes j_{2}$ if and only if $j_{2}\subset \overline{i}\otimes j_{1}$. It is said to be \emph{cofinite} if for all $j, j'\in J$, $\{i\in I, j'\subset i\otimes j\}$ is finite. It is said to be \emph{connected} if for any $j, j'\in J$, there exists $i\in I$ such that $j'\subset i\otimes j$. A \emph{compatible dimension function} on a based module is a linear map $d : (\Z_{J}, \otimes) \rightarrow \R$ such that
\begin{itemize}
\item $d(j) > 0$ for all $j\in J$,
\item $d(i\otimes j) = d(i)d(j)$ for all $i\in I$ and $j\in J$.
\end{itemize}
A based module endowed with a compatible dimension function is called a \emph{fusion module}.
\end{de}

Note that if $M$ is a cofinite based module over a fusion ring $R$ with dimension function $d$, then we can define a $\Z_{I}$-valued bilinear form by
\begin{equation*}
\langle j_{1}, j_{2}\rangle = \sum_{i\in I}\lambda_{\overline{i}j_{1}}^{j_{2}}i.
\end{equation*}
which is equivariant in the sense that $\langle i\otimes j_{1}, j_{2}\rangle = i\otimes \langle j_{1}, j_{2}\rangle$. Then, for any $j_{0}\in J$ the map $d : j\mapsto d(\langle j, j_{0}\rangle)$ is a compatible dimension function on $M$.

\begin{ex}
Let $R = \Z_{I}$ be a based ring. The regular action turns it into a based $R$-module and the dimension function on the fusion ring yields a compatible dimension function on the module. It follows from the definition of a based ring that this module is cofinite and connected. A fusion $R$-module $M$ is said to be \emph{standard} if it is isomorphic to $R$ with this fusion module structure.
\end{ex}

Let us now explain the idea of \cite{arano2015torsion}. If $(A, \alpha)$ is a torsion action of a compact quantum group $\G$, then the category of $\G$-equivariant Hilbert modules over $A$ is a \emph{module C*-category} over the representation category of $\G$. As a consequence, its Grothendieck group is a based module over $R_{\G}$. The finite-dimensionality of the action implies that the module is cofinite, while the ergodicity yields connectedness. If the action is trivial, then the module category is exactly the representation category of $\G$, hence the corresponding module is $R_{\G}$. This justifies the following definitions :

\begin{de}
Let $R$ be an $I$-based ring. A based module is said to be a \emph{torsion module} if it is cofinite and connected. The based ring $R$ is said to be \emph{torsion-free} if any torsion module is standard.
\end{de}

According to \cite[Thm 2.8]{arano2015torsion}, a compact quantum group $\G$ is torsion-free if $R_{\G}$ is torsion-free. However, the converse may not hold since it is not clear that any torsion module of $R_{\G}$ can be traced back to a torsion action of $\G$. However, we will see that knowing the torsion modules of $R_{\G}$ is often enough to understand the torsion actions of $\G$. Let us conclude by a remark on the link between torsion actions and monoidal equivalence which will be crucial. If $\G$ and $\HH$ are monoidally equivalent compact quantum groups in the sense of \cite{bichon2006ergodic}, then there is a one-to-one correspondence between their actions which was explicitly described in \cite{de2010actions}. Moreover, this correspondence can be lifted to an equivalence of categories which preserves finite-dimensionality and ergodicity as explained in \cite[Sec 8]{voigt2011baum}. Therefore, the equivariant Morita equivalence classes of torsion actions are also in one-to-one correspondence. This means that we only have to find a simple monoidally equivalent model to study torsion in a given compact quantum group.

\section{Classification of torsion actions}\label{sec:torsion}

In this section we will investigate how torsion actions behave under several constructions. The main goal is to classify the torsion actions of free wreath products in order to be able to prove a suitable version of the Baum-Connes conjecture for them. The general strategy is as follows : we first work at the level of fusion rings and fusion modules, where everything can be treated combinatorially. Then, we use these results to deduce the corresponding statements for genuine actions using the setting of module C*-categories and Tannaka-Krein duality for ergodic actions of compact quantum groups developed in \cite{de2013tannaka}.

\subsection{Free product}

Given two compact quantum groups $\G_{1}$ and $\G_{2}$, S. Wang defined in \cite{wang1995free} their free product $\G_{1}\ast\G_{2}$ (here we are slightly abusing notation since this should be understood as the dual of the free product of the dual discrete quantum groups). Since we will not need the explicit definition but only the representation theory, let us recall what irreducible representations of $\G_{1}\ast\G_{2}$ look like. They can be indexed by alternating words in $\Irr(\G_{1})\setminus\{\varepsilon_{1}\}$ and $\Irr(\G_{2})\setminus\{\varepsilon_{2}\}$ with the following fusion rules :
\begin{equation*}
w\beta_{1}\otimes \beta_{2} w' = w\beta_{1}\beta_{2} w'
\end{equation*}
if $\beta_{1}$ and $\beta_{2}$ are not representations of the same factor and
\begin{equation*}
w\beta_{1}\otimes \beta_{2} w' = \sum_{\underset{\gamma\neq\varepsilon}{\gamma\subset\beta_{1}\otimes \beta_{2}}}w\gamma w' \oplus \delta_{\beta_{1}, \overline{\beta}_{2}}(w\otimes w')
\end{equation*}
otherwise. It was proven in \cite[Thm 1.25]{arano2015torsion} that if $\G_{1}$ and $\G_{2}$ are torsion-free, then their free product also is. However if $(A, \alpha)$ is a torsion action for say $\G_{1}$, then it is also a torsion action for $\G_{1}\ast\G_{2}$ through the inclusion
\begin{equation*}
A\otimes C(\G_{1})\subset A\otimes C(\G_{1}\ast\G_{2}).
\end{equation*}
Thus, the best that we can expect in general is that torsion actions of the free product all arise from this construction. This is what we are going to prove now. To do this, let us first introduce some terminology. If $\G$ and $\HH$ are two compact quantum groups with $C(\HH)\subset C(\G)$, then any action $\alpha$ of $\HH$ gives rise to an action of $\G$ called the \emph{induced action} and denoted by $\Ind_{\HH}^{\G}(\alpha)$. At the level of fusion modules, this corresponds to the usual construction of an induced module, namely considering the $R_{\G}$-module $R_{\G}\otimes_{R_{\HH}}N$, which will be denoted by $\Ind_{R_{\HH}}^{R_{\G}}(N)$. For convenience, let us give a more explicit description in the case of free products. For two base sets $I_{1}$ and $I_{2}$, we will denote by $W(I_{1}, I_{2})$ the set of alternating words ending in $I_{2}$.

\begin{lem}\label{lem:inducedfreeproduct}
Let $R_{1}$ and $R_{2}$ be two based rings with respective base sets $I_{1}$ and $I_{2}$ and let $N$ be a based $R_{1}$-module with base set $J$. The induced module $\Ind_{R_{1}}^{R_{1}\ast R_{2}}(N)$ is the based module with base set
\begin{equation*}
\{wj, j\in J, w\in W(I_{1}, I_{2})\cup\{\emptyset\}\}
\end{equation*}
with the obvious action of $R_{1}\ast R_{2}$.
\end{lem}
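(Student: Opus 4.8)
The plan is to unwind the definition of the induced module and then exhibit the claimed basis by showing that $R_{1}\ast R_{2}$ is \emph{free} as a right module over $R_{1}$. Recall that $\Ind_{R_{1}}^{R_{1}\ast R_{2}}(N) = (R_{1}\ast R_{2})\otimes_{R_{1}}N$, where $R_{1}\ast R_{2}$ is regarded as a right $R_{1}$-module through the canonical inclusion $\iota : R_{1}\hookrightarrow R_{1}\ast R_{2}$ sending a basis element $\alpha\in I_{1}\setminus\{\varepsilon_{1}\}$ to the length-one word $\alpha$ and $\varepsilon_{1}$ to the empty word. The first step is a normal form for the basis of $R_{1}\ast R_{2}$: every alternating word $v$ either ends in a letter of $I_{2}$ (or is empty), in which case $v = w$ with $w\in W(I_{1},I_{2})\cup\{\emptyset\}$, or else ends in a letter $\alpha\in I_{1}\setminus\{\varepsilon_{1}\}$, in which case $v = w\alpha$ with $w$ as before; this decomposition is visibly unique.

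Next I would turn this normal form into a module statement using the free product fusion rules. Since $w$ ends in a letter of $I_{2}$ while $\alpha\in I_{1}$ belongs to the other factor, the first fusion rule says that the product of $w$ and $\iota(\alpha)$ in $R_{1}\ast R_{2}$ is the concatenation $w\alpha$, with no cancellation. Hence the words $\{w : w\in W(I_{1},I_{2})\cup\{\emptyset\}\}$ generate $R_{1}\ast R_{2}$ as a right $R_{1}$-module, and uniqueness of the normal form shows that $R_{1}\ast R_{2} = \bigoplus_{w} w\cdot\iota(R_{1})$, each summand being free of rank one over $R_{1}$ (the elements $w$ and $w\alpha$, for $\alpha\neq\varepsilon_{1}$, are distinct basis words of $R_{1}\ast R_{2}$). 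Therefore $R_{1}\ast R_{2}$ is a free right $R_{1}$-module with basis indexed by the words $w$.

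Tensoring with $N$ over $R_{1}$ and using this freeness then gives
\begin{equation*}
(R_{1}\ast R_{2})\otimes_{R_{1}}N \;\cong\; \bigoplus_{w}\big(w\cdot\iota(R_{1})\big)\otimes_{R_{1}}N \;\cong\; \bigoplus_{w}N
\end{equation*}
as abelian groups, so that the elements $w\otimes_{R_{1}}j$, which we denote by $wj$, form a $\Z$-basis. This settles spanning and linear independence simultaneously, the only real content being the freeness just established. Concretely, the reduction underlying spanning is that whenever a word ends in an $I_{1}$-letter one slides that letter across the tensor sign into $N$, i.e. $w\alpha\otimes_{R_{1}}j = w\otimes_{R_{1}}(\alpha\otimes j)$, the right-hand factor $\alpha\otimes j$ now denoting the $R_{1}$-action on $N$.

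Finally, the $R_{1}\ast R_{2}$-module structure is the left multiplication $x\cdot(w\otimes_{R_{1}}j) = (xw)\otimes_{R_{1}}j$; rewriting $xw$ through the free product fusion rules and pushing any trailing $I_{1}$-letter into $N$ via its $R_{1}$-action produces exactly the \emph{obvious action} on the basis $\{wj\}$, whose structure constants are positive integers because those of $R_{1}\ast R_{2}$ and of $N$ are. The step I expect to require the most care is checking that this is genuinely a \emph{based} module, namely the reciprocity relation $j_{1}\subset i\otimes j_{2}$ if and only if $j_{2}\subset\overline{i}\otimes j_{1}$: I would deduce it by a Frobenius reciprocity argument, combining the corresponding identities for the free product fusion ring ($\overline{i_{1}\otimes i_{2}} = \overline{i_{2}}\otimes\overline{i_{1}}$ and $\varepsilon\subset\overline{i_{1}}\otimes i_{2}$ iff $i_{1}=i_{2}$) with the based-module axiom already available for $N$. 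Tracking the fusion cancellations, that is, the $\delta_{\beta_{1},\overline{\beta}_{2}}$ terms, through this computation is the one genuinely fiddly bookkeeping point.
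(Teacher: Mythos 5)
Your proposal is correct. The paper's own ``proof'' is a single sentence asserting that the lemma follows directly from the description of the representation theory of a free product, so your argument is exactly the expansion it leaves to the reader: the normal form $v=w$ or $v=w\alpha$ for alternating words, the resulting freeness of $R_{1}\ast R_{2}$ as a right $R_{1}$-module with basis $W(I_{1},I_{2})\cup\{\emptyset\}$, and the identification $(R_{1}\ast R_{2})\otimes_{R_{1}}N\cong\bigoplus_{w}N$ are precisely the content being invoked, and your remaining deferred step (the reciprocity axiom for the based-module structure) is the routine Frobenius bookkeeping you describe.
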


\begin{proof}
This directly follows from the description of the representation theory.
\end{proof}

Note that inducing the standard module of one of the factors yields the standard module of the free product. As a first step, we will prove that if we induce a non-standard torsion module, then we again obtain a non-standard one. This requires a characterization of standard module through the following notion of stabilizer : if $R$ is an $I$-based ring and $M$ is a $J$-based $R$-module, we define the \emph{stabilizer} of an element $j\in J$ by
\begin{equation*}
\Stab(j) = \{i\in I, j\subset i\otimes j\}.
\end{equation*}
Note that if $M$ is cofinite, then the stabilizer of any element is finite
. Moreover, if the module is standard then there is at least one element with trivial stabilizer, which is the one corresponding the trivial representation of $\G$. In fact, the converse also holds :

\begin{lem}\label{lem:regularmodules}
Let $N$ be a fusion module of $R_{\G}$ and assume that there is a basis element $j_{0}$ with trivial stabilizer. Then, there is an isomorphism of based $R_{\G}$-modules $N\to R_{\G}$ sending $j_{0}$ to the trivial representation.
\end{lem}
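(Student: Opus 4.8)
The plan is to show that the map $\phi\colon R_{\G}\to N$ defined on basis elements by $\phi(i)=i\otimes j_{0}$ is an isomorphism of based modules sending the trivial representation $\varepsilon$ to $j_{0}$; its inverse is then the desired isomorphism $N\to R_{\G}$. That $\phi$ is a homomorphism of $R_{\G}$-modules is immediate from associativity of the action, since $\phi(i'\otimes i)=(i'\otimes i)\otimes j_{0}=i'\otimes(i\otimes j_{0})=i'\otimes\phi(i)$, and clearly $\phi(\varepsilon)=\varepsilon\otimes j_{0}=j_{0}$. It therefore remains to prove that $\phi$ induces a bijection between the bases $I=\Irr(\G)$ and $J$.

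First I would control the supports $S_{i}=\{j\in J : j\subset i\otimes j_{0}\}$. If $j\in S_{i}$, then the based-module axiom gives $j_{0}\subset\overline{i}\otimes j$. Hence if some $j$ lies in both $S_{i}$ and $S_{i'}$, then $j_{0}\subset\overline{i}\otimes j\subset\overline{i}\otimes(i'\otimes j_{0})=(\overline{i}\otimes i')\otimes j_{0}$, so there is $k\subset\overline{i}\otimes i'$ with $k\in\Stab(j_{0})$. Since $\Stab(j_{0})=\{\varepsilon\}$, this forces $\varepsilon\subset\overline{i}\otimes i'$, and the based-ring axiom then yields $i=i'$. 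Thus the sets $S_{i}$ are pairwise disjoint, so distinct basis elements of $R_{\G}$ are sent by $\phi$ to elements with disjoint supports.

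The heart of the argument is to show that each $i\otimes j_{0}$ is in fact a single basis element with multiplicity one. For this I would compute
\begin{equation*}
\sum_{j\in J}\big(\lambda_{i,j_{0}}^{j}\big)^{2}=\sum_{j\in J}\lambda_{i,j_{0}}^{j}\,\lambda_{\overline{i},j}^{j_{0}},
\end{equation*}
where the equality uses Frobenius reciprocity for fusion modules, i.e.\ $\lambda_{i,j_{0}}^{j}=\lambda_{\overline{i},j}^{j_{0}}$ (the strengthening of the based-module axiom that holds in the presence of a compatible dimension function). The right-hand side is exactly the multiplicity of $j_{0}$ in $\overline{i}\otimes(i\otimes j_{0})=(\overline{i}\otimes i)\otimes j_{0}$. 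Writing $\overline{i}\otimes i=\varepsilon\oplus\bigoplus_{k\neq\varepsilon}c_{k}\,k$ in $R_{\G}$, the summand $\varepsilon$ contributes $j_{0}$ exactly once, while each $k\neq\varepsilon$ satisfies $k\notin\Stab(j_{0})$ and hence contributes nothing to the multiplicity of $j_{0}$ in $k\otimes j_{0}$. Therefore $\sum_{j}(\lambda_{i,j_{0}}^{j})^{2}=1$, and since the $\lambda_{i,j_{0}}^{j}$ are non-negative integers, exactly one of them equals $1$ and all others vanish. So $\phi(i)$ is a single basis element of $N$.

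It remains to check surjectivity and assemble the pieces. Connectedness of $N$ (which holds for the torsion modules under consideration, and is necessary for the conclusion since $R_{\G}$ is connected) guarantees that every $j\in J$ satisfies $j\subset i\otimes j_{0}$ for some $i$, i.e.\ $j\in S_{i}$; by the previous two paragraphs $\phi(i)=j$, so $\phi$ is surjective on bases, each $i\otimes j_{0}$ being nonzero by compatibility of the dimension function. Combined with the disjointness of supports, $\phi$ is a bijection $I\to J$ preserving the based-module structure, hence an isomorphism of based $R_{\G}$-modules; the compatible dimension function is then automatically transported and coincides with that of $R_{\G}$. Its inverse is the sought isomorphism $N\to R_{\G}$ sending $j_{0}$ to $\varepsilon$. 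The main obstacle is the multiplicity-one computation above: the bare based-module axiom only gives the equivalence $\lambda_{i,j_{0}}^{j}\neq 0\Leftrightarrow\lambda_{\overline{i},j}^{j_{0}}\neq 0$, so one must first justify the exact Frobenius reciprocity $\lambda_{i,j_{0}}^{j}=\lambda_{\overline{i},j}^{j_{0}}$, e.g.\ through the equivariant bilinear form $\langle\,\cdot\,,\,\cdot\,\rangle$ and the rigidity provided by the dimension function, before the sum-of-squares counting can be applied.
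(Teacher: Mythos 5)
Your overall strategy is the same as the paper's: show that each $i\otimes j_{0}$ is a single basis element with multiplicity one, and that $i\mapsto i\otimes j_{0}$ is then a bijection onto the basis of $N$ (injectivity from the trivial stabilizer together with the based-ring axiom, surjectivity from connectedness). The disjointness-of-supports argument is correct, and you are right that connectedness must be invoked for surjectivity; it is implicit in the statement since the lemma is only applied to torsion modules.

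The one genuine issue is the step you yourself flag: your multiplicity-one count invokes the exact Frobenius reciprocity $\lambda_{i,j_{0}}^{j}=\lambda_{\overline{i},j}^{j_{0}}$, which is not among the paper's axioms for a based module (only the qualitative version $j\subset i\otimes j_{0}\Leftrightarrow j_{0}\subset\overline{i}\otimes j$ is assumed), and your suggestion that it follows from the bilinear form and the dimension function is left unsubstantiated. Fortunately the equality is not needed, because the quantity you are really computing is
\begin{equation*}
\sum_{j\in J}\lambda_{i,j_{0}}^{j}\,\lambda_{\overline{i},j}^{j_{0}}
=\big[\text{multiplicity of }j_{0}\text{ in }(\overline{i}\otimes i)\otimes j_{0}\big]=1,
\end{equation*}
the last equality holding because $\varepsilon$ occurs exactly once in $\overline{i}\otimes i$ and no nontrivial constituent of $\overline{i}\otimes i$ lies in $\Stab(j_{0})$. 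Since the based-module axiom already gives $\lambda_{\overline{i},j}^{j_{0}}\geqslant 1$ whenever $\lambda_{i,j_{0}}^{j}\geqslant 1$, this forces $\sum_{j}\lambda_{i,j_{0}}^{j}\leqslant 1$, hence $i\otimes j_{0}$ is a single basis element; the sum-of-squares identity is superfluous. This inequality argument is exactly what the paper does, packaged through the equivariant bilinear form: $\langle i\otimes j_{0},j_{0}\rangle=i\otimes\langle j_{0},j_{0}\rangle=i$, while each constituent $j_{k}$ of $i\otimes j_{0}$ contributes at least one copy of $i$ to the left-hand side. With that one substitution your proof is complete and coincides with the paper's.
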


\begin{proof}
Recall from Section \ref{sec:preliminaries} the definition of the bilinear form $\langle j, j'\rangle = \sum_{\beta\in \Irr(\G)}\lambda_{\beta j}^{j'}\overline{\beta}$. By definition, if $j_{0}$ has trivial stabilizer then $\langle j_{0}, j_{0}\rangle = \varepsilon$. Now let $\beta\in \Irr(\G)$ be non-trivial and set
\begin{equation*}
\beta\otimes j_{0} = \sum_{k=1}^{n} \lambda_{k}j_{k}.
\end{equation*}
We then get
\begin{equation*}
\sum_{k}\lambda_{k}\langle j_{k}, j_{0}\rangle = \langle \beta\otimes j_{0}, j_{0}\rangle = \beta\otimes \langle j_{0}, j_{0}\rangle = \beta\otimes \varepsilon = \beta.
\end{equation*}
However, $\langle j_{k}, j_{0}\rangle$ always contains $\beta$ and $\lambda_{k}$ is a non-negative integer. The only possibility is to have $k=1$. In other words, $\beta\otimes j_{0} = j_{1}$ is a basis element. It is now clear that sending $j_{0}$ to the trivial representation gives an isomorphism with the standard module.
\end{proof}

This will be useful in the proofs of the main results of this section. We will also need to know how the induced module splits back over $R_{\G_{1}}$.

\begin{lem}\label{lem:induceddecomposition}
Let $N$ be a torsion module for $R_{\G_{1}}$. Then, $M = \Ind_{\G_{1}}^{\G_{1}\ast \G_{2}}(N)$ splits as a $R_{\G_{1}}$-module into a direct sum of $N$ and standard modules. In particular, $\Ind_{\G_{1}}^{\G_{1}\ast \G_{2}}(N) \simeq \Ind_{\G_{1}}^{\G_{1}\ast \G_{2}}(N')$ as $R_{\G_{1}\ast\G_{2}}$ modules if and only if $N\simeq N'$ as $R_{\G_{1}}$-modules.
\end{lem}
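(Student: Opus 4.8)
The plan is to understand the explicit basis of $M = \Ind_{\G_1}^{\G_1 \ast \G_2}(N)$ given by Lemma \ref{lem:inducedfreeproduct}, namely the words $wj$ with $j \in J$ and $w \in W(I_1, I_2) \cup \{\emptyset\}$, and to reorganize this basis according to how elements of $R_{\G_1}$ act on it. Restricting the $R_{\G_1 \ast \G_2}$-action to the subring $R_{\G_1}$, the key observation is that an element $\beta_1 \in I_1$ acts on a basis element $wj$ without changing the ``tail'' in an essential way when the word $w$ is nonempty. More precisely, I would split the basis into the part consisting of pure tails $\{j : j \in J\}$ (i.e.\ the empty-word case $w = \emptyset$) and the part consisting of words $wj$ with $w \neq \emptyset$. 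The first piece is visibly an $R_{\G_1}$-submodule isomorphic to $N$ itself, since acting by $\beta_1 \in I_1$ on $j$ produces $\beta_1 \otimes j$ computed exactly as in $N$ (no alternating prefix is created because $j$ already sits in the $\G_1$-slot).

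Next I would analyze the words $wj$ with $w \neq \emptyset$. Since $w$ is an alternating word ending in $I_2$ (the factor opposite to $R_{\G_1}$), the last letter before $j$ belongs to $I_2$. When we act by $\beta_1 \in I_1$ on $wj$, the fusion rules of the free product (recalled before Lemma \ref{lem:inducedfreeproduct}) show that $\beta_1$ simply concatenates on the left and can only interact with the leftmost letter of $w$, never with $j$. Fixing a word $w \neq \emptyset$, the set $\{w'j : j \in J\}$ sharing a fixed nonempty prefix pattern is not quite invariant, but I would instead organize by the $R_{\G_1}$-orbit structure: the crucial point is that for $w \neq \emptyset$ the element $wj$ has \emph{trivial stabilizer} in $R_{\G_1}$. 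Indeed, the free-product fusion rules guarantee that $\beta_1 \otimes wj$ for $\beta_1 \neq \varepsilon$ produces only words strictly longer (or with altered first letter) than $wj$, so $wj \not\subset \beta_1 \otimes wj$ for any nontrivial $\beta_1$. Hence each such generator seeds, under the $R_{\G_1}$-action, a submodule with a base element of trivial stabilizer.

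By Lemma \ref{lem:regularmodules}, any fusion $R_{\G_1}$-module containing a base element with trivial stabilizer is standard (isomorphic to $R_{\G_1}$). Therefore I would decompose the span of $\{wj : w \neq \emptyset\}$ into $R_{\G_1}$-submodules, each generated by a suitable ``minimal'' word and each isomorphic to the standard module $R_{\G_1}$, while the empty-word part gives the copy of $N$. Assembling these yields the claimed splitting $M \simeq N \oplus (\text{standard modules})$ as $R_{\G_1}$-modules. For the final ``if and only if'' statement, the reverse implication is trivial (isomorphic modules induce isomorphic modules, since induction is functorial), and the forward implication follows because $N$ is recoverable from $M$ as an $R_{\G_1}$-module: it is the summand, hence $\Ind(N) \simeq \Ind(N')$ forces $N \oplus (\text{standard}) \simeq N' \oplus (\text{standard})$, and by a cancellation argument (using that $N, N'$ are the unique non-standard summands, or that $N$ is characterized as the submodule generated by pure tails $J$) we get $N \simeq N'$.

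The main obstacle I anticipate is making the decomposition of the $w \neq \emptyset$ part into \emph{disjoint} standard submodules precise: one must exhibit a canonical set of generators (minimal words) so that the induced standard submodules partition the remaining basis without overlap, and verify that no $R_{\G_1}$-action links two different such generators. This requires a careful bookkeeping of the alternating-word combinatorics and a verification that the trivial-stabilizer property really holds for every $wj$ with $w \neq \emptyset$ rather than merely for well-chosen representatives. The cancellation step in the ``only if'' direction is the second delicate point, since cancellation of standard summands is not automatic for general based modules and may need the explicit identification of $N$ as the pure-tail submodule of $M$.
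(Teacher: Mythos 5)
Your overall strategy --- split off the empty-word part as a copy of $N$, show everything else is standard over $R_{\G_{1}}$ via Lemma \ref{lem:regularmodules}, and recover $N$ from the restriction --- is the paper's. However, the pivotal claim as you state it is false: it is \emph{not} true that every $wj$ with $w\neq\emptyset$ has trivial $R_{\G_{1}}$-stabilizer. The words $w$ in the basis of $M$ are alternating words whose letter adjacent to $j$ lies in $I_{2}$, but their \emph{leftmost} letter $\gamma_{k}$ may lie in $I_{1}$. In that case the free product fusion rules give $\beta_{1}\otimes wj=\sum_{\gamma\subset\beta_{1}\otimes\gamma_{k},\ \gamma\neq\varepsilon}\gamma\gamma_{k-1}\cdots\gamma_{1}j+\delta_{\beta_{1},\overline{\gamma}_{k}}\,\gamma_{k-1}\cdots\gamma_{1}j$, and the summand with $\gamma=\gamma_{k}$ (first letter \emph{not} altered) occurs precisely when $\beta_{1}\subset\gamma_{k}\otimes\overline{\gamma}_{k}$; any such nontrivial $\beta_{1}$ stabilizes $wj$. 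This had to fail: in a standard module only the element corresponding to $\varepsilon$ has trivial stabilizer, so at most one basis element per standard summand can have one. The repair is the point you flagged as a worry, and it is exactly what the paper does: work with \emph{minimal} generators. Given an $R_{\G_{1}}$-submodule $P$, choose $wj\in P$ with $w$ of minimal length; if $w\neq\emptyset$ its leftmost letter cannot be in $I_{1}$ (tensoring by $\overline{\gamma}_{k}$ would produce a shorter word in $P$), so it lies in $I_{2}$, and then $\beta_{1}\otimes wj=\beta_{1}wj$ is a single strictly longer basis element for every nontrivial $\beta_{1}\in I_{1}$. Hence $\Stab(wj)\cap I_{1}=\{\varepsilon_{1}\}$ and $P$ is standard by Lemma \ref{lem:regularmodules}; the elements whose leftmost letter lies in $I_{1}$ are absorbed into the standard summands generated by these minimal elements, which also settles the disjointness you were concerned about.

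For the ``only if'' direction your cancellation worry is legitimate but resolves without any cancellation theorem: an isomorphism of based $R_{\G_{1}\ast\G_{2}}$-modules restricts to a base-preserving isomorphism of based $R_{\G_{1}}$-modules and therefore matches up the connected components of the restrictions. By the first part, when $N$ is non-standard it is the \emph{unique} non-standard connected component of $M$ over $R_{\G_{1}}$, and likewise for $N'$, so the isomorphism carries one onto the other; when $N$ is standard, $M$ is the standard $R_{\G_{1}\ast\G_{2}}$-module and the decomposition forces $N'$ to be standard as well. Do use this characterization rather than ``the submodule generated by the pure tails'', which is not a priori preserved by an abstract isomorphism.
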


\begin{proof}
Let us consider an arbitrary $R_{\G_{1}}$-submodule $P$ of $M$ and let $w$ be a word of minimal length such that there exists a basis element $j$ of $N$ satisfying $wj\in P$. Assume that $w\neq \emptyset$ and write $w=\gamma_{k}\dots\gamma_{1}$. We cannot have $\gamma_{k}\in\Irr(\G_{1})$ since otherwise $\gamma_{k-1}\cdots\gamma_{1}j\subset \overline{\gamma}_{k}\otimes wj\in P$, contradicting the minimality of $w$. Thus, $\gamma_{k}\in \Irr(\G_{2})$ and the description of the fusion rules for free products implies that $P$ is standard for $R_{\G_{1}}$. Thus, $P$ can only be non-standard if $w$ is the empty word, i.e. if $P$ contains a basis element of $N$, which of course implies $P=N$.

Now if $\Phi : \Ind_{\G_{1}}^{\G_{1}\ast \G_{2}}(N) \rightarrow \Ind_{\G_{1}}^{\G_{1}\ast \G_{2}}(N')$ is an isomorphism of $R_{\G_{1}\ast\G_{2}}$-modules, it can also be seen as an isomorphism of $R_{\G_{1}}$-modules. This isomorphism preserves standard modules so it must also send $N$ isomorphically to $N'$, hence the result.
\end{proof}

For clarity, the proof that torsion actions of a free product are always induced from one of the factors will be split into two parts. First we will work at the level of fusion modules and then recast the proof in the setting of module C*-categories. The argument is inspired by that of \cite[Thm 1.25]{arano2015torsion}
.

\begin{prop}\label{prop:torsionfreeproduct}
Let $M$ be a torsion $R_{\G_{1}\ast\G_{2}}$-module. Then, it is induced from a torsion module of one of the factors.
\end{prop}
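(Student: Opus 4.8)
The plan is to pick a basis element $m_0$ in the torsion module $M$ and understand its structure under the two factors $R_{\G_1}$ and $R_{\G_2}$ separately. Since $M$ is connected and cofinite, a natural first move is to consider the $R_{\G_1}$-submodule and $R_{\G_2}$-submodule of $M$ generated by $m_0$, call them $N_1$ and $N_2$. The idea, following the decomposition strategy of \cite[Thm 1.25]{arano2015torsion}, is that the full module should be reconstructed from just one of these, because the free product fusion rules force a tree-like (word) structure. Concretely, I would try to show that $m_0$ can be chosen so that its stabilizer under one of the factors, say $\Stab_{R_{\G_2}}(m_0)$, is trivial; then by Lemma \ref{lem:regularmodules} the $R_{\G_2}$-submodule through $m_0$ is standard, and I want to promote this to a global statement that $M$ is induced from the $R_{\G_1}$-submodule $N_1$.

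The key steps, in order, would be: first, introduce a notion of \emph{length} on basis elements of $M$ by declaring the generating element(s) to have length zero and measuring distance via the fusion action, mimicking the alternating-word length in $W(I_1,I_2)$. Second, fix a basis element $m_0$ of minimal length and analyze $\beta \otimes m_0$ for $\beta \in \Irr(\G_1)$ and $\beta \in \Irr(\G_2)$; the free product fusion rules say that tensoring by an element of a factor either strictly increases length or reflects back down, exactly as in Lemma \ref{lem:inducedfreeproduct}. Third, use the minimality together with the cofiniteness-driven bilinear form $\langle \cdot, \cdot\rangle$ to argue that, for one of the two factors, the submodule generated by $m_0$ has a basis element of trivial stabilizer, hence is standard. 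Fourth, set $N$ to be the torsion submodule over the \emph{other} factor through $m_0$ and build an explicit map $\Ind_{R_{\G_i}}^{R_{\G_1\ast\G_2}}(N) \to M$ sending $wj \mapsto w \otimes m_j$; checking this is an isomorphism of based modules reduces, via the fusion rules, to the connectedness and cofiniteness of $M$. The final step is to verify that the submodule $N$ is itself a torsion module (cofinite and connected over $R_{\G_i}$), which is inherited from the corresponding properties of $M$.

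I expect the main obstacle to be the second and third steps: rigorously controlling how tensoring by a factor element interacts with the length function, and in particular ruling out the possibility that a minimal-length element is \emph{simultaneously} nontrivially stabilized by pieces of both factors. The subtlety is that a priori $M$ need not be induced at all, so I must extract the word structure purely from the abstract module axioms plus the free product fusion rules, rather than assuming it. The cleanest way to do this is probably to argue by contradiction: if $M$ were not of the induced form, one could find a basis element whose stabilizer contains nontrivial elements from both factors, and then the free product fusion rules (specifically the $\delta_{\beta_1, \overline{\beta}_2}$ term producing a shorter word) would force a shorter element into the same connected component, contradicting minimality. Making the bookkeeping of this contradiction precise—tracking which factor each stabilizing element comes from and how the reflection term behaves—is where the real work lies, and it is essentially the module-theoretic shadow of the representation-theoretic argument in \cite{arano2015torsion}.
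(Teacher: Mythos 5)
Your skeleton matches the paper's: consider the sub-$R_{\G_1}$- and sub-$R_{\G_2}$-modules through a basis element, show that one of them is standard via Lemma \ref{lem:regularmodules}, and then check that $wj\mapsto w\otimes j$ defines an isomorphism from the induced module onto $M$. The gap is in the decisive step, where you propose to rule out a basis element being nontrivially stabilized by both factors using a length function, minimality, and the reflection term $\delta_{\beta_1,\overline{\beta}_2}$. That mechanism does not work: $M$ is connected by hypothesis, so ``a shorter element in the same connected component'' contradicts nothing, and there is no canonical length on an abstract torsion module before you have established the induced structure you are trying to prove. The correct mechanism, and the one the paper uses, is cofiniteness alone: if $\beta_1\in\Irr(\G_1)$ and $\beta_2\in\Irr(\G_2)$ are both nontrivial stabilizers of a basis element $j$, then for every $l$ the alternating word $(\beta_1\beta_2)^{l}$ is a single irreducible representation of $\G_1\ast\G_2$ (no reflection term arises, since consecutive letters lie in different factors) and it stabilizes $j$; hence $\Stab(j)$ is infinite, contradicting the finiteness of stabilizers in a cofinite module. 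Note also that the hypothesis you actually need here is non-standardness of, say, $N_{1}^{e}$: by Lemma \ref{lem:regularmodules} this guarantees that every basis element of $N_{1}^{e}$ has a nontrivial $\Irr(\G_1)$-stabilizer, which then forces all $\Irr(\G_2)$-stabilizers to be trivial. The case where both factor submodules are standard for every basis element must be handled separately by quoting that $M$ is then itself standard, which is the content of the proof of \cite[Thm 1.25]{arano2015torsion}.

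Two further points where your sketch is too thin. First, showing that $w\otimes j$ is actually a basis element for every alternating word $w$ ending in $\Irr(\G_2)$ requires an induction on the length of $w$, reapplying the stabilizer dichotomy at each stage to the new basis element $\gamma_{k}\cdots\gamma_{1}\otimes j$. Second, injectivity of the map $\Phi$ is not a formal consequence of ``connectedness and cofiniteness'': if $\Phi(wj)=\Phi(w'j')$ with $w\neq w'$, one deduces $j'= w''\otimes j$ for a nonempty word $w''$ starting and ending in $\Irr(\G_2)$, and one must again manufacture infinitely many stabilizers, this time of the form $(\overline{w}''\beta' w''\beta)^{l}$, to reach a contradiction with cofiniteness. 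So the single tool you need throughout is the infinite-stabilizer argument, not a minimality argument.
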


\begin{proof}
Let $e$ be a basis element of $M$ and let $N_{1}^{e}$ (resp. $N_{2}^{e}$) denote the sub-$R_{\G_{1}}$-module (resp. the sub-$R_{\G_{2}}$-module) of $M$ generated by $e$. The proof of \cite[Thm 1.25]{arano2015torsion} shows that if both $N_{1}^{e}$ and $N_{2}^{e}$ are standard for all $e$, then $M$ is standard. In particular, it is induced from the standard module of either $R_{\G_{1}}$ or $R_{\G_{2}}$. 

Let us assume that $M$ is not standard and let $e$ be such that $N_{1}^{e}$ is not standard (the case where $N_{2}^{e}$ is not standard is similar). There is a natural candidate for the isomorphism : for a word $w\in \Irr(\G_{1}\ast\G_{2})$ ending in $\Irr(\G_{2})$ and a basis element $j$ of $N_{1}^{e}$, it should send the basis element $wj$ of $\Ind_{\G_{1}}^{\G_{1}\ast\G_{2}}(N_{1}^{e})$ to $w\otimes j$. However, this only makes sense if $w\otimes j$ is also a basis element. Let us prove this by induction on the length of $w$, with the following induction hypothesis :
\begin{center}
$H_{k}$ : "Let $w = \gamma_{k}\cdots\gamma_{1}\in \Irr(\G_{1}\ast\G_{2})$ be a word ending in $\Irr(\G_{2})$ and let $j$ be a basis element of $N_{1}^{e}$. Then, $w\otimes j$ is a basis element."
\end{center}

For $k=1$, assume that there exists $\beta_{2}\in \Irr(\G_{2})\setminus\{\varepsilon_{2}\}$ such that $j\subset \beta_{2}\otimes j$ and let $\beta_{1}$ be a nontrivial element in $\Stab(j)\cap\Irr(\G_{1})$, which exists by Lemma \ref{lem:regularmodules} because $N_{1}^{e}$ is not standard. Then, for any integer $l$, $(\beta_{1}\beta_{2})^{l}\in \Irr(\G_{1}\ast\G_{2})$ is a non-trivial stabilizer of $j$. This yields infinitely many stabilizers, contradicting cofiniteness. Thus, $\Stab(j)\cap\Irr(\G_{2}) = \{\varepsilon_{2}\}$. This implies by assumption that the $R_{\G_{2}}$-submodule generated by $j$ is standard and the isomorphism must send $j$ to $\varepsilon_{2}$. In particular, $\gamma_{1}\otimes j$ is a basis element for all $\gamma_{1}\in \Irr(\G_{2})$ and $H_{1}$ holds.

Assume now $H_{k}$ for some $k\geqslant 1$ and let $\gamma_{k}\cdots\gamma_{1}\in \Irr(\G_{1}\ast\G_{2})$ be a word ending in $\Irr(\G_{2})$. Let us assume for simplicity that $\gamma_{k}\in \Irr(\G_{2})\setminus\{\varepsilon_{2}\}$, the other case being similar. Set $j' = \gamma_{k}\cdots\gamma_{1}\otimes j$ which is a basis element by $H_{k}$. By the same argument as before, $\Stab(j')\cap\Irr(\G_{1}) = \{\varepsilon_{1}\}$ so that $N_{1}^{j'}$ is standard for $R_{\G_{1}}$ with an isomorphism sending $j'$ to the trivial representation. In particular, for any $\gamma_{k+1}\in \Irr(\G_{1})\setminus\{\varepsilon_{1}\}$, $\gamma_{k+1}\otimes j'$ is a basis element and $H_{k+1}$ holds.

We can now finish the proof. Let
\begin{equation*}
\Phi : \Ind_{\G_{1}}^{\G_{1}\ast\G_{2}}(N_{1}^{e}) \rightarrow M
\end{equation*}
be the map sending $wj$ to $w\otimes j$. This is a surjective module homomorphism by connectedness. Let $w\neq w'$ be words ending in $\Irr(\G_{2})$ and let $j, j'$ be basis elements in $N_{1}^{e}$ such that $\Phi(wj) = \Phi(w'j')$. Then,
\begin{equation*}
j'\subset(\overline{w}'\otimes w)\otimes j.
\end{equation*}
Observe that $\overline{w}'\otimes w$ is a sum of non-empty words starting and ending in $\Irr(\G_{2})\setminus\{\varepsilon_{2}\}$. In particular, there exists $w''\in \Irr(\G_{1}\ast\G_{2})$ ending in $\Irr(\G_{2})$ such that $j'\subset w''\otimes j$. Since we have proved that $w''\otimes j$ is a basis element, we get $j' = w''\otimes j$. Now, if $\beta, \beta'\in \Irr(\G_{1})\setminus\{\varepsilon_{1}\}$ stabilize respectively $j$ and $j'$, we get for any $l\in \N$ a stabilizer $(\overline{w}''\beta'w''\beta)^{l}$ of $j$, contradicting cofiniteness. Thus, $w=w'$, $j=j'$ and $\Phi$ is faithful.
\end{proof}

We can now state and prove the main result of this subsection.

\begin{thm}\label{thm:actionsfreeproduct}
Let $\G_{1}$ and $\G_{2}$ be two compact quantum groups. Then, up to equivariant Morita equivalence there is a one-to-one correspondance between torsion actions of $\G_{1}\ast\G_{2}$ and torsion actions induced from $\G_{1}$ or $\G_{2}$.
\end{thm}

\begin{proof}
As in \cite[Thm 3.16]{arano2015torsion}, it suffices to recast the previous proof in the setting of module C*-categories. Let $(A, \alpha)$ be a torsion action of $\G_{1}\ast\G_{2}$ and consider the associated C*-module category $(\mathcal{C}, \otimes)$. For any irreducible object $X$ in $\mathcal{C}$, we can consider the module C*-categories $\mathcal{C}_{1}^{X}$ and $\mathcal{C}_{2}^{X}$ generated by $X$ and the action of the representation categories of $\G_{1}$ and $\G_{2}$ respectively. By Proposition \ref{prop:torsionfreeproduct}, there is an $X$ such that one of them (say $\mathcal{C}_{1}^{X}$) has a non-standard fusion module if $(A, \alpha)$ is not trivial. By \cite[Lem 3.10]{arano2015torsion}, the module C*-category is equivalent to the one of the trivial action if its associated fusion module is standard. The same reasoning as for fusion modules therefore yields an isomorphism between $\mathcal{C}$ and the module C*-category induced from $\mathcal{C}_{1}^{X}$ (by which we mean the module C*-category obtained by induction of all the objects of $\mathcal{C}_{1}^{X}$). To conclude, note that by the general results of \cite{de2013tannaka}, there is a torsion action $(A', \alpha')$ of $\G_{1}$ such that the associated module C*-category is $\mathcal{C}_{1}^{X}$. Thus, $\mathcal{C}$ is equivalent to the module C*-category associated to $\Ind_{\G_{1}}^{\G_{1}\ast\G_{2}}(\alpha')$ and again by \cite{de2013tannaka} the actions are equivariantly Morita equivalent.

Consider now two induced torsion actions which are equivariantly Morita equivalent. If they are induced from different factors, then the fusion module of the one induced from $\G_{2}$ is a direct sum of standard modules when restricted to $\G_{1}$. Thus, the fusion module associated to the one induced from $\G_{1}$ is isomorphic to a direct sum of standard module for $R_{\G_{1}}$. Since all its submodules are also standard for $R_{\G_{2}}$, we conclude by \cite[Thm 1.25]{arano2015torsion} that the actions are trivial. This leaves us with the case of two torsion actions $(A, \alpha)$ and $(A', \alpha')$ of say $\G_{1}$ such that $\Ind_{\G_{1}}^{\G_{1}\ast\G_{2}}(\alpha)$ and $\Ind_{\G_{1}}^{\G_{1}\ast\G_{2}}(\alpha')$ are equivariantly Morita equivalent. The same reasoning as in Lemma \ref{lem:induceddecomposition} shows that in both associated module C*-categories $\mathcal{C}$ and $\mathcal{C}'$, the module C*-subcategory coming from the original action is the only one to be non-trivial over the representation category of $\G_{1}$. The equivalence of categories must therefore restrict to an equivalence between these sub-categories and we conclude by \cite{de2013tannaka}.
\end{proof}

\subsection{Free wreath product}\label{subsec:wreathproduct}

In \cite{bichon2004free}, J. Bichon introduced a "free" version of the wreath product construction which produces, out of a compact quantum group $\G$ and a quantum permutation group $S_{N}^{+}$, their \emph{free wreath product} $\G\wr_{\ast}S_{N}^{+}$. This gives many examples of compact quantum groups and in particular the so-called \emph{quantum reflection groups} when $\G$ is the dual of a cyclic group. Once again, we will not give the definition of these objects since it is not necessary for our purpose. All we need is to understand their fusion ring, which was described by F. Lemeux and P. Tarrago in \cite{lemeux2014free}. Consider the free monoid $F$ over $\Irr(\G)$. Then, $\Z_{F}$ is an abelian group. Moreover, we can define an involution on it by setting
\begin{equation*}
\overline{\gamma_{1}\cdots\gamma_{k}} = \overline{\gamma}_{k}\cdots\overline{\gamma}_{1}.
\end{equation*}
We can also use the fusion rules of $\G$ to define a ring structure on $\Z_{F}$ by
\begin{equation*}
(\gamma_{1}\cdots\gamma_{k})\otimes (\gamma'_{1}\cdots\gamma'_{n}) = \gamma_{1}\cdots\gamma_{k}\gamma'_{1}\cdots\gamma'_{n}\oplus \left(\sum_{\beta\subset\gamma_{k}\otimes\gamma_{1}'}\gamma_{1}\cdots\gamma_{k-1}\beta\gamma'_{2}\cdots\gamma'_{n}\right) \oplus \delta_{\gamma_{k}, \overline{\gamma}'_{1}}(\gamma_{1}\cdots\gamma_{k-1})\otimes (\gamma'_{2}\cdots\gamma'_{n}).
\end{equation*}
If $N\geqslant 4$, the based ring obtained in this way is the fusion ring of $\G\wr_{\ast}S_{N}^{+}$.

The idea to study torsion in $\G\wr_{\ast}S_{N}^{+}$ is to embed it into a compact quantum group whose torsion is better understood, namely a free product. Such an embedding need not exist in general, but it always does once we consider a monoidally equivalent compact quantum group, thanks to the following result of F. Lemeux and P. Tarrago in \cite{lemeux2014free}. From now on, let us denote by $u^{1}$ the fundamental representation of $SU_{q}(2)$.

\begin{thm}[Lemeux-Tarrago]\label{thm:lemeuxtarrago}
Let $\G$ be a compact quantum group and let $\HH_{q}$ be the compact quantum subgroup of $\G\ast SU_{q}(2)$ generated by $\{u^{1}\alpha u^{1}\mid \alpha\in \Irr(\G)\}$. If $N\geqslant 4$, then there exists $0< \vert q\vert < 1$ such that $\HH_{q}$ is monoidally equivalent to $\G\wr_{\ast}S_{N}^{+}$.
\end{thm}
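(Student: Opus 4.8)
The plan is to turn the statement into a comparison of representation categories via Woronowicz's Tannaka--Krein duality: two compact quantum groups are monoidally equivalent exactly when their representation categories are equivalent as rigid C*-tensor categories, so it suffices to produce a unitary tensor equivalence $\Rep(\HH_q)\simeq\Rep(\G\wr_{\ast}S_{N}^{+})$. At the outset I would fix $q$ through the classical monoidal equivalence $S_{N}^{+}\sim_{\mathrm{mon}}SO_{q}(3)$, which is available as soon as $N\geqslant 4$; the relevant constraint is that the loop parameter satisfies $(q+q^{-1})^{2}=N$, and this is where the hypothesis on $N$ enters. Note that $SO_{q}(3)$ is nothing but the quantum subgroup $\langle u^{2}\rangle$ of $SU_{q}(2)$, so the role of the ambient $SU_{q}(2)$ is precisely to supply the spinor $u^{1}$.

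First I would describe $\Rep(\HH_q)$ explicitly. Since the irreducibles of a free product are the alternating words, one enumerates the irreducibles of $\HH_q$ from the free product fusion rules together with the single relation $u^{1}\otimes u^{1}=\varepsilon\oplus u^{2}$. This relation already forces $u^{2}$, hence all $u^{2m}$, into $\HH_q$, so that $\HH_q$ contains $\langle u^{2}\rangle=SO_{q}(3)$. Computing a product such as $(u^{1}\gamma_{1}u^{1})\otimes(u^{1}\gamma_{2}u^{1})$, the two inner copies of $u^{1}$ merge according to $u^{1}\otimes u^{1}=\varepsilon\oplus u^{2}$: the $\varepsilon$-channel produces $u^{1}(\gamma_{1}\otimes\gamma_{2})u^{1}$ together with, through the self-duality $\overline{u^{1}}=u^{1}$, a contraction term, while the $u^{2}$-channel produces the longer word $u^{1}\gamma_{1}u^{2}\gamma_{2}u^{1}$. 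Proceeding by induction on the length, one checks that the resulting fusion rules are exactly those of $\G\wr_{\ast}S_{N}^{+}$, the $u^{2}$-channel accounting for the appearance of longer words and the $\varepsilon$-channel for the merging of end letters and, via $\overline{u^{1}}=u^{1}$, for their contraction. This yields a bijection between the basis words $\gamma_{1}\cdots\gamma_{k}$ and the irreducibles of $\HH_q$ which identifies the two based rings.

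The heart of the matter, and the step I expect to be hardest, is that a based-ring isomorphism is much weaker than a monoidal equivalence: one has to match the intertwiner spaces, equivalently the associativity constraints and $6j$-symbols, and not merely the fusion multiplicities. Here I would exploit that both categories admit a diagrammatic description. The intertwiners of $\G\wr_{\ast}S_{N}^{+}$ are spanned by non-crossing partitions whose blocks are decorated by morphisms of $\Rep(\G)$, with loop parameter $\sqrt{N}$; on the other side, the Temperley--Lieb calculus of $SU_{q}(2)$ endows the subcategory $\HH_q$ generated by the $u^{1}\alpha u^{1}$ with the very same decorated non-crossing calculus, now with loop parameter $q+q^{-1}$. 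The genuinely technical point is a \emph{freeness}, i.e.\ a basis, statement: one must show that these decorated diagrams span the intertwiner spaces of $\HH_q$ and are linearly independent, so that they have the expected dimensions; this is what upgrades the diagram-preserving functor to a fully faithful one. I expect essentially all of the work to be concentrated here, typically via Jones--Wenzl idempotents or an explicit semisimplicity and dimension count.

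Once this is in place the conclusion is formal. With the loop parameters matched by the choice $(q+q^{-1})^{2}=N$, the two decorated non-crossing categories coincide, so the functor sending each generator $u^{1}\alpha u^{1}$ to the corresponding generator of $\G\wr_{\ast}S_{N}^{+}$ is a unitary tensor equivalence; equivalently, $\HH_q$ is a concrete model of $\G\wr_{\ast}SO_{q}(3)$ and the equivalence is simply the transport of $\Rep(\G\wr_{\ast}S_{N}^{+})$ along Banica's equivalence $\Rep(S_{N}^{+})\simeq\Rep(SO_{q}(3))$. Tannaka--Krein then promotes this categorical equivalence to the asserted monoidal equivalence of compact quantum groups.
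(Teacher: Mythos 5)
The paper does not actually prove this statement: it is imported verbatim from Lemeux and Tarrago \cite{lemeux2014free} and used as a black box, so there is no internal proof to compare against. Your outline is nonetheless faithful to the strategy of the cited reference: fix $q$ by $(q+q^{-1})^{2}=N$ (which is where $N\geqslant 4$ enters, via the monoidal equivalence $S_{N}^{+}\sim SO_{q}(3)$ and the identification $SO_{q}(3)=\langle u^{2}\rangle\subset SU_{q}(2)$), check from the free product fusion rules and $u^{1}\otimes u^{1}=\varepsilon\oplus u^{2}$ that the words $u^{1}\gamma_{1}u^{2}\cdots u^{2}\gamma_{k}u^{1}$ reproduce the fusion ring of $\G\wr_{\ast}S_{N}^{+}$, and then upgrade this based-ring isomorphism to a unitary tensor equivalence by identifying both intertwiner calculi with non-crossing partitions decorated by morphisms of $\Rep(\G)$, the loop parameters $\sqrt{N}$ and $q+q^{-1}$ being matched by the choice of $q$. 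Your fusion computation for $(u^{1}\gamma_{1}u^{1})\otimes(u^{1}\gamma_{2}u^{1})$ is correct and does match the Lemeux--Tarrago rules recalled in Subsection \ref{subsec:wreathproduct}.

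As a proof, however, the text has a genuine gap exactly where you locate the difficulty, and it is not a small one: the assertion that the decorated non-crossing partitions span the intertwiner spaces of $\G\wr_{\ast}S_{N}^{+}$, and that the corresponding spaces on the $\HH_{q}$ side have the same dimensions (equivalently, that the evaluation of decorated diagrams inside $\Rep(\G\ast SU_{q}(2))$ is injective on the relevant span), is the entire content of the theorem. Invoking Jones--Wenzl idempotents or ``a dimension count'' names a method without discharging it; in \cite{lemeux2014free} this step is carried out by an induction on word length, computing dimensions of intertwiner spaces on the free product side from the known fusion rules and comparing with the combinatorics of decorated partitions. Without it you only obtain an isomorphism of fusion rings, which, as you say yourself, is strictly weaker than monoidal equivalence. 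Two minor points: once a unitary tensor equivalence of the representation categories is exhibited, that \emph{is} the definition of monoidal equivalence, so the closing appeal to Tannaka--Krein does no work; and your normalization $(q+q^{-1})^{2}=N$ forces $\vert q\vert=1$ when $N=4$, so the clause $0<\vert q\vert<1$ in the statement should really be read as $0<\vert q\vert\leqslant 1$ --- a quirk of the statement rather than of your argument.
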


\begin{rem}
P. Fima and L. Pittau introduced in \cite{fima2015free} the free wreath product of a compact quantum group by any quantum automorphism group of a finite-dimensional C*-algebra $B$ preserving a given state and proved a similar monoidal equivalence. In fact, $\HH_{q}$ is nothing but $\G\wr_{\ast} SO_{q}(3)$ in the sense of \cite[Def 2.6]{fima2015free}.
\end{rem}

We will therefore classify the torsion actions of $\HH_{q}$. The point is that any torsion action of $\HH_{q}$ induces a torsion action of $\G\ast SU_{q}(2)$. Since $SU_{q}(2)$ is known to be torsion-free by \cite[Prop 3.2]{voigt2011baum} (see also \cite[Prop 1.23]{arano2015torsion} for a combinatorial proof), the torsion actions of the free product are induced from torsion actions of $\G$ by Theorem \ref{thm:actionsfreeproduct}. For convenience, let us give an explicit description of the inclusion $\Lambda : R_{\HH_{q}}\subset R_{\G\ast SU_{q}(2)}$. Let us denote by $u^{n}$ the $n$-th irreducible representation of $SU_{q}(2)$ and let $w$ be a word in the free monoid over $\Irr(\G)$.
\begin{itemize}
\item If $w = \varepsilon_{\G}^{n}$, then $\Lambda(w) = u^{2n}$,
\item if $w = \beta_{1}\cdots \beta_{n}$ with $\beta_{i}\neq \varepsilon_{\G}$ for all $1\leqslant i\leqslant n$, then $\Lambda(w) = u^{1}\beta_{1}u^{2}\beta_{2}u^{2}\cdots u^{2}\beta_{n}u^{1}$,
\item otherwise, we can write $w$ as $w = \varepsilon^{n_{1}}_{\G}w_{1}\varepsilon^{n_{2}}_{\G}w_{2}\cdots w_{k}\varepsilon^{n_{k+1}}_{\G}$ where each $w_{i}$ does not contain $\varepsilon_{\G}$. Let us denote by $\widetilde{\Lambda}(w_{i})$ the same expression as $\Lambda(w_{i})$ except that the first and last $u^{1}$'s are removed. Then, $\Lambda(w) = u^{2n_{1}+1}\widetilde{\Lambda}(w_{1})u^{2n_{2}+2}\widetilde{\Lambda}(w_{2})\cdots u^{2n_{k}+2}\widetilde{\Lambda}(w_{k})u^{2n_{k+1}+1}$.
\end{itemize}
One can easily check that these expressions are compatible with the tensor products and that $\Lambda$ is an isomorphism. What we now have to understand is how torsion modules of $\G\ast SU_{q}(2)$ behave when we only consider the action of $\HH_{q}$.

\begin{lem}\label{lem:torsionwreathproduct}
Let $N$ be a torsion module for $R_{\G}$ and let $M = \Ind_{\G}^{\G\ast SU_{q}(2)}(N)$. Then, $M$ contains a unique non-standard torsion module for $R_{\HH_{q}}$.
\end{lem}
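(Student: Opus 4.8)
The plan is to describe $M$ explicitly and then sort its $R_{\HH_q}$-connected components into standard and non-standard ones by means of the trivial-stabilizer criterion of Lemma \ref{lem:regularmodules}. By Lemma \ref{lem:inducedfreeproduct}, a basis of $M$ is given by the elements $wj$, where $j$ ranges over a basis $J$ of $N$ and $w$ ranges over the alternating words ending in $\Irr(SU_{q}(2))$ together with the empty word; here a $\G$-letter sitting immediately to the left of $j$ is to be absorbed into $N$ via the $R_{\G}$-module structure. The action of $R_{\HH_q}$ is then obtained by letting the explicit images $\Lambda(v)$ act on the left, and the whole analysis reduces to understanding which elements have trivial stabilizer.

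The key observation is that every non-trivial basis element $\Lambda(v)$ begins with a letter in $\Irr(SU_{q}(2))$: this is immediate from the three cases of the formula for $\Lambda$. Since the leftmost letter of $\Lambda(v)$ is untouched when forming $\Lambda(v)\otimes m$, two consequences follow. First, $\Lambda(\varepsilon_{\G})=u^{2}$ stabilizes every basis element $wj$ whose word begins with some $u^{a}$, because $u^{2}\otimes u^{a}$ contains $u^{a}$; hence such an element has a non-trivial stabilizer. Second, if $w$ is empty or begins with a $\G$-letter, then for $v\neq\emptyset$ the product $\Lambda(v)\otimes wj$ is a single strictly longer word (a concatenation, with no cancellation at the junction between an $SU_{q}(2)$-letter and a $\G$-letter), so $wj$ has trivial stabilizer. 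By Lemma \ref{lem:regularmodules}, a component is therefore standard if and only if it contains an element whose word is empty or begins with a $\G$-letter; equivalently, the non-standard components are exactly those all of whose basis elements have a word beginning with an $SU_{q}(2)$-letter.

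It remains to prove that there is exactly one such component. First I would show that in a non-standard component every initial exponent is odd: using $\Lambda(\varepsilon_{\G})=u^{2}$ one can lower the leftmost exponent in steps of two, and an even initial exponent would eventually expose the $u^{0}$-term, deleting the first letter and producing either the empty word or a $\G$-initial word inside the same component, contradicting non-standardness. The same reduction brings any element down to one of the form $u^{1}\gamma\cdots$ or $u^{1}j$. Applying $\Lambda(\overline{\gamma})=u^{1}\overline{\gamma}u^{1}$ and extracting the term coming from $\varepsilon_{\G}\subset\overline{\gamma}\otimes\gamma$ cancels the $\G$-letter $\gamma$ and shortens the word, so by induction on its length every element is connected to some $u^{1}j$. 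Finally, since $N$ is connected, acting by the various $\Lambda(\delta)$ links all the $u^{1}j$ to a fixed $u^{1}j_{0}$, so these elements lie in a single component; this component contains $u^{1}j_{0}$ and is non-standard because $u^{2}$ stabilizes $u^{1}j_{0}$.

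Cofiniteness of this component is inherited from that of $M$ over $R_{\G\ast SU_{q}(2)}$, since $v\mapsto\Lambda(v)$ embeds the relevant index sets, so it is genuinely a torsion module. The main obstacle is precisely the last connectivity argument: making the double induction (on the initial exponent and on the word length) rigorous while controlling the several terms produced by the $SU_{q}(2)$ Clebsch--Gordan rule and by the absorption of $\G$-letters into $N$, exactly in the spirit of the induction used in the proof of Proposition \ref{prop:torsionfreeproduct}.
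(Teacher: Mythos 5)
Your argument is correct and takes essentially the same route as the paper: reduce the leading $u$-exponent using $u^{2}=\Lambda(\varepsilon_{\G})$, cancel $\G$-letters using $u^{1}\overline{\gamma}u^{1}$, use connectedness of $N$ to merge all the $u^{1}j$ into a single component, and detect standardness via the trivial-stabilizer criterion of Lemma \ref{lem:regularmodules}. The only step to tighten is the very last one: a non-trivial stabilizer of the single element $u^{1}j_{0}$ does not rule out standardness (every non-trivial element of a standard module has a non-trivial stabilizer), so you should use your own dichotomy to check that \emph{every} basis element of this component begins with an $SU_{q}(2)$-letter and hence is stabilized by $u^{2}$ --- which holds because $\Lambda(v)\otimes u^{1}j_{0}$ only ever produces such words, no $v$ having $\Lambda(v)=u^{1}$ --- and this is exactly the form of the claim the paper's proof invokes.
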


\begin{proof}
Let $j$ be a basis element of $N$, let $w\in \Irr(\G\ast SU_{q}(2))$ be a word ending in $\Irr(SU_{q}(2))$ and let $N(wj)$ be the sub-$R_{\HH_{q}}$-module generated by the basis element $wj$ of $M$. We will denote by $w'$ a word of minimal length such that $w'j \in N(wj)$. If $w' = \emptyset$, then for any word $w''$ ending in $\Irr(SU_{q}(2))\setminus\{\varepsilon\}$, $\Lambda(w'')\otimes j = \Lambda(w'')j$ so that $N(j)$ is standard. If $w'$ starts in $\Irr(\G)$, then again $\Lambda(w'')\otimes w'j = \Lambda(w'')w'j$ for any word $w''$ ending in $\Irr(SU_{q}(2))\setminus\{\varepsilon\}$, so that $N(wj) = N(w'j)$ is standard. Let us therefore assume that $w' = u^{n_{0}}\beta_{1}\cdots\beta_{k}u^{n_{k}}$. If $n_{0} > 1$, then since $u^{2}\in \Irr(\HH_{q})$ we see that
\begin{equation*}
u^{n_{0}-2}\beta_{1}\cdots\beta_{k}u^{n_{k}}j\in N(wj)
\end{equation*}
and we can assume that $n_{0} = 1$. If $k>0$, then tensoring by $u^{1}\overline{\beta}_{1}u^{1}$ we see that $u^{n_{1}}\beta_{2}\cdots\beta_{k}u^{n_{k}}j\in N(wj)$, contradicting the minimality of $w'$. Thus, $w' = u^{1}$. Note that for any basis element $j'$ of $N$, there exists $\beta\in \Irr(\G)$ such that $j'\subset \beta\otimes j$, therefore $u^{1}j'\subset u^{1}\beta u^{1}\otimes u^{1}j$. This implies that $N(u^{1}j) = N(u^{1}j')$ and we therefore get only one torsion module, denoted by $N_{u^{1}}$.

To conclude, we now have to show that $N_{u^{1}}$ is not standard. But this is clear since $u^{2}$ is a non-trivial stabilizer of all its basis elements.
\end{proof}

As for free products, the statement for actions can be deduced from this algebraic result.

\begin{thm}\label{thm:torsionwreathproduct}
Let $\G$ be a compact quantum group and let $N\geqslant 4$. Then, the equivariant Morita equivalence classes of \emph{non-trivial} torsion actions of $\G\wr_{\ast}S_{N}^{+}$ are in one-to-one correspondence with \emph{all} the equivariant Morita equivalence classes of torsion actions of $\G$.
\end{thm}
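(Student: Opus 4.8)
The plan is to move the whole problem to the monoidally equivalent quantum group $\HH_q = \G\wr_{\ast}SO_q(3)$ and to classify \emph{its} torsion actions, following the same two-stage strategy (fusion modules first, module C*-categories afterwards) that was used for Theorem \ref{thm:actionsfreeproduct}. First I would invoke Theorem \ref{thm:lemeuxtarrago} together with the remark identifying $\HH_q$ with $\G\wr_{\ast}SO_q(3)$ to pick $q$ with $\HH_q$ monoidally equivalent to $\G\wr_{\ast}S_N^+$ for $N\geqslant 4$. Since equivariant Morita equivalence classes of torsion actions are in one-to-one correspondence under monoidal equivalence (as recalled in Section \ref{sec:preliminaries}), it suffices to exhibit a bijection between non-trivial torsion actions of $\HH_q$ and all torsion actions of $\G$.

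The combinatorial heart of the argument is a bijection between torsion $R_\G$-modules and non-standard torsion $R_{\HH_q}$-modules. In the direction from $\G$ to $\HH_q$, I would take a torsion $R_\G$-module $N$, form $M = \Ind_{\G}^{\G\ast SU_q(2)}(N)$, and extract the unique non-standard torsion $R_{\HH_q}$-submodule $N_{u^1}$ furnished by Lemma \ref{lem:torsionwreathproduct}. This assignment is well defined up to isomorphism, since an isomorphism $N\simeq N'$ induces one between the corresponding modules $M$ and $M'$ over $R_{\G\ast SU_q(2)}$, hence over $R_{\HH_q}$, and thus carries the distinguished non-standard piece of one to that of the other. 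By \cite[Lem 3.10]{arano2015torsion} a standard fusion module is exactly the one coming from the trivial action, so $N_{u^1}$ being non-standard means the associated torsion action of $\HH_q$, produced via the Tannaka-Krein duality of \cite{de2013tannaka}, is non-trivial.

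For the reverse direction I would start from a non-trivial torsion action of $\HH_q$, with non-standard torsion module $P$, and induce it to $\G\ast SU_q(2)$ through the inclusion $\Lambda : R_{\HH_q}\hookrightarrow R_{\G\ast SU_q(2)}$. The resulting torsion $R_{\G\ast SU_q(2)}$-module is, by Proposition \ref{prop:torsionfreeproduct}, induced from a torsion module of one factor; since $SU_q(2)$ is torsion-free, any module induced from it is standard and therefore also equals $\Ind_{\G}^{\G\ast SU_q(2)}$ of the standard $R_\G$-module, so in every case the module has the form $\Ind_{\G}^{\G\ast SU_q(2)}(N)$ for a torsion $R_\G$-module $N$. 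Lemma \ref{lem:torsionwreathproduct} says this module contains only one non-standard $R_{\HH_q}$-piece, which must be $P$, identifying $P$ with $N_{u^1}$; and Lemma \ref{lem:induceddecomposition} recovers $N$ up to isomorphism from $\Ind_{\G}^{\G\ast SU_q(2)}(N)$. Hence the two constructions are mutually inverse at the level of fusion modules, and recasting the argument in module C*-categories exactly as in the proof of Theorem \ref{thm:actionsfreeproduct} — using \cite{de2013tannaka} to pass between module C*-categories and equivariant Morita equivalence classes of torsion actions — upgrades this to the desired bijection of actions.

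The step I expect to be the main obstacle is the reverse direction of the combinatorial bijection: checking that inducing an arbitrary non-standard torsion $R_{\HH_q}$-module $P$ up to $\G\ast SU_q(2)$ yields a genuine torsion module that is of the form $\Ind_{\G}^{\G\ast SU_q(2)}(N)$, and that the $R_\G$-module $N$ so produced is uniquely determined and recovers $P$ as its distinguished non-standard piece. This hinges on a careful analysis of how $\Lambda$ interacts with induction and on the stabilizer and cofiniteness bookkeeping already exploited in Proposition \ref{prop:torsionfreeproduct} and Lemma \ref{lem:torsionwreathproduct}, and it is where the delicate part of the proof will lie.
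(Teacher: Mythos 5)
Your proposal is correct and follows essentially the same route as the paper: reduce via monoidal equivalence to $\HH_{q}$, induce a torsion action up to $\G\ast SU_{q}(2)$ and apply Theorem \ref{thm:actionsfreeproduct} to see it comes from $\G$, then use Lemma \ref{lem:torsionwreathproduct} to identify the unique non-standard summand of the restriction, with injectivity following from compatibility of Morita equivalence with induction. The paper's proof is simply a more compressed version of the same argument, leaving the fusion-module bookkeeping you spell out to the cited lemmas.
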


\begin{proof}
Let $(A, \alpha)$ be a torsion action of $\HH_{q}$. By Theorem \ref{thm:actionsfreeproduct}, $\Ind_{\HH_{q}}^{\G\ast SU_{q}(2)}(\alpha)$ is equivariantly Morita equivalent to an action induced from $\G$. By Lemma \ref{lem:torsionwreathproduct}, the restriction of such an action to $\HH_{q}$ has exactly one non-trivial summand. Thus, this summand is equivariantly Morita equivalent to $(A, \alpha)$. Moreover, if two torsion actions of $\HH_{q}$ are equivariantly Morita equivalent, then the same holds for their induction to $\G\ast SU_{q}(2)$, so that by Theorem \ref{thm:actionsfreeproduct} the original actions of $\G$ are also equivariantly Morita equivalent.
\end{proof}

In particular, a free wreath product is never torsion-free since the trivial action of $\G$ gives rise to a non-trivial torsion action of $\G\wr_{\ast}S_{N}^{+}$. Let us describe explicitly this action. Consider the quantum subgroup of $\HH_{q}$ generated by $u^{2}$, which is isomorphic to $SO_{q}(3)$. It is known that it admits a projective action $\alpha_{q}$ on $M_{2}(\C)$ which is a non-trivial torsion action. It was proven in \cite[Lem 4.4]{voigt2015structure} that this is the only non-trivial torsion action of $SO_{q}(3)$ up to equivariant Morita equivalence. Note that this action becomes trivial when induced to $SU_{q}(2)$. Similarly, by Theorem \ref{thm:torsionwreathproduct} $\Ind_{SO_{q}(3)}^{\HH_{q}}(\alpha_{q})$ is a nontrivial torsion action whose induction to $\G\ast SU_{q}(2)$ is trivial. Under the monoidal equivalence, $(M_{2}(\C), \alpha_{q})$ becomes the defining action of $C(S_{N}^{+})$ on $\C^{N}$. Inducing this action to $\G\wr_{\ast}S_{N}^{+}$ yields a nontrivial torsion action $(\C^{N}, \alpha_{N})$ which is precisely the one obtained from the trivial action of $\G$. If $\G$ is torsion-free, this is the only source of torsion in the free wreath product :

\begin{cor}
Let $\G$ be a torsion-free compact quantum group and let $N\geqslant 4$. Then, $(\C^{N}, \alpha_{N})$ is the only non-trivial torsion action of $\G\wr_{\ast}S_{N}^{+}$ up to equivariant Morita equivalence.
\end{cor}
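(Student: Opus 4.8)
The plan is to derive the corollary as an immediate consequence of Theorem \ref{thm:torsionwreathproduct} applied to the torsion-free hypothesis on $\G$. The theorem establishes a bijection between equivariant Morita equivalence classes of non-trivial torsion actions of $\G\wr_{\ast}S_{N}^{+}$ and \emph{all} equivariant Morita equivalence classes of torsion actions of $\G$. So the entire content reduces to counting the latter. Since $\G$ is torsion-free, every torsion action of $\G$ is, by definition, equivariantly Morita equivalent to one of the actions $\alpha_{\gamma}$ on $B(H_{\gamma})$ for $\gamma\in\Irr(\G)$. The first step is therefore to observe that all of these $\alpha_{\gamma}$ fall into a single equivariant Morita equivalence class — namely that of the trivial action $\alpha_{\varepsilon}$ on $\C$. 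Indeed, each $\alpha_{\gamma}(T)=(u^{\gamma})^{*}(T\otimes 1)u^{\gamma}$ is by construction conjugate by the unitary $u^{\gamma}$ to the trivial action, so it is even equivariantly isomorphic (and in particular equivariantly Morita equivalent) to $\alpha_{\varepsilon}$. Hence the set of equivariant Morita equivalence classes of torsion actions of $\G$ is a singleton.

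Feeding this singleton through the bijection of Theorem \ref{thm:torsionwreathproduct}, we conclude that $\G\wr_{\ast}S_{N}^{+}$ admits exactly one equivariant Morita equivalence class of non-trivial torsion action. It then remains only to identify this unique class concretely. For this I would invoke the explicit computation carried out in the discussion immediately preceding the corollary: the trivial action $\alpha_{\varepsilon}$ of $\G$ is precisely the one whose induction to $\G\wr_{\ast}S_{N}^{+}$ produces, under the monoidal equivalence of Theorem \ref{thm:lemeuxtarrago} and the transport of the $SO_{q}(3)$-action $\alpha_{q}$ to the defining action of $C(S_{N}^{+})$ on $\C^{N}$, the action $(\C^{N},\alpha_{N})$. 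Thus the unique non-trivial class is represented by $(\C^{N},\alpha_{N})$, as claimed.

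The main (and in fact only) conceptual point is the first step, namely verifying that the actions $\alpha_{\gamma}$ genuinely collapse to a single Morita class rather than producing one class per irreducible $\gamma$. This is where the definition of triviality — Morita equivalence to \emph{some} $\alpha_{\gamma}$, with all such declared "trivial" — does the work, and it is worth making explicit that the unitary conjugation linking $\alpha_{\gamma}$ to $\alpha_{\varepsilon}$ respects the $\G$-equivariant structure. Everything else is a direct transport along the bijection already proven, so no genuine obstacle remains; the corollary is essentially a bookkeeping consequence of the theorem together with the earlier explicit identification of $(\C^{N},\alpha_{N})$.
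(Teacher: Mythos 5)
Your argument is correct and is essentially the paper's (unwritten) intended derivation: the corollary follows immediately from Theorem \ref{thm:torsionwreathproduct} once one knows that a torsion-free $\G$ has exactly one equivariant Morita equivalence class of torsion actions, the unique non-trivial class for $\G\wr_{\ast}S_{N}^{+}$ then being identified with $(\C^{N},\alpha_{N})$ by the discussion preceding the corollary. One small correction to your key step: $(B(H_{\gamma}),\alpha_{\gamma})$ is \emph{not} equivariantly isomorphic to $(\C,\alpha_{\varepsilon})$ when $\dim\gamma>1$ (the underlying algebras are not isomorphic); conjugation by $u^{\gamma}$ exhibits $\alpha_{\gamma}$ as a cocycle perturbation of the trivial action on $B(H_{\gamma})$, and the correct statement is that $(H_{\gamma},u^{\gamma})$ is an equivariant imprimitivity bimodule implementing an equivariant Morita equivalence between $(B(H_{\gamma}),\alpha_{\gamma})$ and $(\C,\alpha_{\varepsilon})$ — equivalently, all $\alpha_{\gamma}$ have the standard fusion module $R_{\G}$. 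With that fix the bookkeeping goes through exactly as you describe.
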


\subsection{Free complexification}

Before ending this section, we want to investigate one last construction called \emph{free complexification} and introduced by T. Banica in \cite{banica2007note}. This is an important source of free unitary quantum groups and may therefore be an interesting object from the point of view of K-theory computations. However, it turns out that in that case, understanding torsion is more difficult than for free wreath products. We will give some general results and then work out the particular case of the complexified hyperoctahedral quantum group $\widetilde{H}_{N}^{+}$. We start by recalling the definition of free complexification, which is slightly different from the previous ones because it does not only take a compact quantum group as an argument but also a distinguished representation. A representation $u$ of a compact quantum group $\G$ is said to be a \emph{fundamental representation} of $\G$ if any irreducible representation is contained up to equivalence in a tensor power of $u$ and its conjugate.

\begin{de}
A \emph{compact matrix quantum group} is a pair $(\G, u)$ where $\G$ is a compact quantum group and $u$ is a fundamental representation of $\G$. If moreover $\overline{u} = u$, then $(\G, u)$ is said to be \emph{orthogonal}.
\end{de}

Let $S^{1}$ denote the compact group of complex numbers of modulus $1$ and consider the free product $\G\ast S^{1}$. If $z$ denotes the identity representation of $S^{1}$, then $\widetilde{u} = uz$ is a representation of $\G\ast S^{1}$ and we can consider the quantum subgroup $\widetilde{\G}$ generated by this representation. This yields a new compact matrix quantum group $(\widetilde{\G}, \widetilde{u})$ called the \emph{free complexification} of $(\G, u)$.

Before studying the behaviour of torsion actions under this construction, let us prove a result concerning divisibility. The notion of divisible quantum subgroup was introduced in \cite{vergnioux2013k} to study the Baum-Connes conjecture for the free unitary quantum groups $U_{F}^{+}$. There are several equivalent definitions but we only give the one which will be useful for us. Let $\G$ and $\HH$ be compact quantum groups such that $C(\HH)\subset C(\G)$. Then, $\Irr(\HH)$ embeds into $\Irr(\G)$ and we can define an equivalence relation on $\Irr(\G)$ by setting $\beta\sim\beta'$ if there exists $\gamma\in \Irr(\HH)$ such that $\beta'\subset \beta\otimes \gamma$.

\begin{de}
The quantum group $\HH$ is said to be \emph{divisible} in $\G$ if for any class $X\in \Irr(\G)/\Irr(\HH)$, there exists a representative $\beta$ of $X$ such that for all $\gamma\in \Irr(\HH)$, $\beta\otimes \gamma$ is irreducible.
\end{de}

It was proven in \cite[Prop 4.3]{vergnioux2013k} that $U_{F}^{+}$ is divisible in $O_{F}^{+}\ast S^{1}$, where $O_{F}^{+}$ denotes the corresponding free orthogonal quantum group. We will extend this result to arbitrary compact quantum groups provided that the fundamental representation is orthogonal. To do so, we first introduce an important subgroup of $(\G, u)$.

\begin{de}
Let $(\G, u)$ be an orthogonal compact matrix quantum group. The compact quantum subgroup $\G_{ev}$ of $\G$ generated by $u\otimes u$ is called the \emph{even part} of $(\G, u)$.
\end{de}

Equivalently, the irreducible representations of $\G_{ev}$ are exactly the irreducible representations of $\G$ which are subrepresentations of $u^{\otimes 2k}$ for some $k\in\N$. Note that because $u\otimes u\subset (uz)\otimes (\overline{z}u)$, $C(\G_{ev})\subset C(\widetilde{\G})$.

\begin{prop}\label{prop:divisiblefreecomplexification}
Let $(\G, u)$ be an orthogonal compact matrix quantum group. Then, the free complexification $\widetilde{\G}$ is divisible in $\G\ast S^{1}$.
\end{prop}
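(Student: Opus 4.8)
The plan is to produce, in each $\sim$-class on $\Irr(\G\ast S^{1})$, a \emph{good} representative, i.e. an element $\beta$ such that $\beta\otimes\gamma$ is irreducible for every $\gamma\in\Irr(\widetilde{\G})$. Recall from the description of the representation theory of free products that the irreducible representations of $\G\ast S^{1}$ are the alternating words in $\Irr(\G)\setminus\{\varepsilon\}$ and the non-trivial characters $\{z^{n}\mid n\neq 0\}$ of $S^{1}$, and that $\widetilde{\G}$ is generated by $\widetilde{u}=uz$, with $\Irr(\G_{ev})\subset\Irr(\widetilde{\G})$ occurring as the length-one words coming from $\widetilde{u}\otimes\overline{\widetilde{u}}=u\otimes u$. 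My candidate representatives are the pure powers $z^{m}$, $m\in\Z$; I will show that each of them is good and that every class contains one of them, which gives divisibility.

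That each $z^{m}$ is good is the easy half. Given any $\gamma\in\Irr(\G\ast S^{1})$, the product $z^{m}\otimes\gamma$ is computed by fusing the single letter $z^{m}$ with the first letter of $\gamma$. By the free product fusion rules this junction is either of type $S^{1}\otimes\G$, in which case $z^{m}\otimes\gamma$ is the concatenation $z^{m}\gamma$, or of type $S^{1}\otimes S^{1}$, in which case it is the single word obtained from $\gamma$ by modifying its leading exponent. In both cases the outcome is a single (possibly shorter) alternating word, hence irreducible; the crucial point is that, because $z^{m}$ carries no $\G$-letter, no $\G\otimes\G$ collision — the only mechanism producing a genuine direct sum — can ever occur, even when the leading exponent of $\gamma$ cancels $z^{m}$.

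The substance of the proof, and its main obstacle, is to show that every $\beta\in\Irr(\G\ast S^{1})$ satisfies $\beta\sim z^{m}$ for some $m$. I would argue by induction on the number $k$ of $\G$-letters of the word $\beta=z^{a_{0}}\gamma_{1}\cdots\gamma_{k}z^{a_{k}}$, the case $k=0$ being trivial, the idea being to remove the rightmost letter $\gamma_{k}$ at the cost of tensoring on the right by an element of $\Irr(\widetilde{\G})$. If $\gamma_{k}\in\Irr(\G_{ev})$ then $\overline{\gamma_{k}}\in\Irr(\widetilde{\G})$, and a suitable tensoring produces, through the $\delta_{\beta_{1},\overline{\beta}_{2}}$ term of the fusion rules, a word with $k-1$ letters of $\G$. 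If $\gamma_{k}\notin\Irr(\G_{ev})$, one first tensors by $\widetilde{u}=uz$: since every $\rho\subset\gamma_{k}\otimes u$ then lies in $\Irr(\G_{ev})$, this trades the odd letter $\gamma_{k}$ for an even one up to a shift by $z$, reducing to the previous situation. Orthogonality is essential precisely here: it is the hypothesis $\overline{u}=u$ that makes $u\otimes u$ contain $\varepsilon$ and feeds the $\delta_{\beta_{1},\overline{\beta}_{2}}$ term, which is what ultimately allows a $\G$-letter to be cancelled against $\widetilde{u}$. The delicate part is to organise these moves into a genuinely decreasing complexity measure — the letters produced by fusing $\gamma_{k}$ with $u$ are not smaller in any naive sense, and the trailing powers of $z$ created at each step must be absorbed (using $\overline{\widetilde{u}}=z^{-1}u$) and tracked — so that the induction terminates at a pure power of $z$. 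Once this reduction is in place, transitivity of $\sim$ yields $\beta\sim z^{m}$, and together with the first half we conclude that every class contains a good representative of the form $z^{m}$, so that $\widetilde{\G}$ is divisible in $\G\ast S^{1}$.
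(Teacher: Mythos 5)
Your first half is sound: each pure power $z^{m}$ is indeed a good representative, for exactly the reason you give (a letter $z^{m}$ can never create a $\G\otimes\G$ junction, so $z^{m}\otimes\gamma$ is always a single word). The fatal problem is the second half: when $\G_{ev}\neq\G$ it is \emph{not} true that every class in $\Irr(\G\ast S^{1})/\Irr(\widetilde{\G})$ contains a pure power of $z$, so the reduction you flag as ``delicate'' cannot be completed. Indeed, every non-trivial element of $\Irr(\widetilde{\G})$ is a word containing at least one $\G$-letter and whose leading $z$-exponent lies in $\{-1,0\}$ (this is the containment $\Irr(\widetilde{\G})\subset W$ from the paper's proof). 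Take $\beta=uz^{2}$. For any non-trivial $\gamma\in\Irr(\widetilde{\G})$ the product $uz^{2}\otimes\gamma$ is computed at the junction $z^{2}\otimes z^{-1}=z$ or at a junction between different factors; in either case no $\G$-letter is ever cancelled, so every irreducible constituent of $uz^{2}\otimes\gamma$ contains at least two $\G$-letters. Hence the class of $uz^{2}$ contains no $z^{m}$. Equivalently, $z^{m}\subset uz^{2}\otimes\gamma$ would force $\gamma\subset z^{-2}\overline{u}z^{m}$, whose leading exponent $-2$ is forbidden for elements of $W$. The same obstruction breaks your inductive step: to strip a trailing $z^{a_{k}}$ with $\vert a_{k}\vert\geqslant 2$ you would need an element of $\Irr(\widetilde{\G})$ beginning with $z^{-a_{k}}$, and there is none.

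The statement is nonetheless true, but the set of good representatives must be taken larger than the powers of $z$. The workable argument is to pick in each class a representative $w$ of \emph{minimal length}. If $w$ has length at least two and ends in a $\G$-letter or in $z^{1}$, one can shorten it by tensoring with $\overline{\beta}$, $\overline{\beta}z$ or $z^{-1}\overline{\beta}$ (which lie in $\Irr(\widetilde{\G})$ according to the parity of $\beta$), contradicting minimality; single letters can be traded for $\varepsilon$ or $z$ in the same way. What survives is: $\varepsilon$, the powers $z^{k}$, and words of length at least two ending in $z^{k}$ with $k\neq 0,1$. Each of these is good by the very argument you used for $z^{m}$: since elements of $\Irr(\widetilde{\G})$ begin with $z^{-1}$ or a $\G$-letter, tensoring such a $w$ on the right never produces a $\G\otimes\G$ junction, hence always yields a single irreducible word. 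In short, the right move is not to normalise every class down to a power of $z$, but to observe that whatever cannot be shortened is automatically good.
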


\begin{proof}
If $C(\G_{ev}) = C(\G)$, then $C(\G)\subset C(\widetilde{\G})$. This implies that $z\subset u\otimes (uz)\in \Irr(C(\widetilde{\G}))$ and eventually that $\widetilde{\G} = \G\ast S^{1}$. In that case, the result is trivial.

Let us therefore assume that $\G_{ev}\neq \G$ and describe the irreducible representations of $\widetilde{\G}$ inside $\G\ast S^{1}$. Following the proof of \cite[Prop 4.3]{vergnioux2013k}, we set, for $\epsilon\in \{-1, 1\}$, $[\epsilon]_{-} = \min(\epsilon, 0)$ and $[\epsilon]_{+} = \max(\epsilon, 0)$. Let $W$ be the subset of $\Irr(\G\ast S^{1})$ consisting in words of the form
\begin{equation*}
z^{[\epsilon_{0}]_{-}}\beta_{1}z^{\epsilon_{1}}\cdots z^{\epsilon_{p-1}}\beta_{p}z^{[\epsilon_{p}]_{+}}
\end{equation*}
with $\epsilon_{i+1} = -\epsilon_{i}$ if $\beta_{i}\in\Irr(\G_{ev})$ and $\epsilon_{i+1} = \epsilon_{i}$ if $\beta_{i}\in \Irr(\G)\setminus\Irr(\G_{ev})$. By definition, $W$ is stable under taking tensor products and contragredients. Moreover, it contains $uz$ so that $\Irr(\widetilde{\G})\subset W$. One can prove that the reverse inclusion also holds but we will not need it hereafter.

Let now $X\in \Irr(\G\ast S^{1})/\Irr(\widetilde{\G})$ and let $w$ be a representative of $X$ of minimal length. If $w$ is the trivial representation then we are done. Assume therefore that $w$ has length $1$. If $w=z^{k}$ with $k\in \Z^{*}$, then the result is also clear. If $w=\beta$ for some representation $\beta\in \Irr(\G)$, we have two cases : if $\beta\in \Irr(\G_{ev})$ then tensoring with $\overline{\beta}$ yields $\varepsilon\in X$. Otherwise, tensoring by $\overline{\beta}z$ yields $z\in X$ and we can again conclude. Let us eventually assume that $w$ has length at least $2$. If it ends with $z^{k}$ for some $k\neq 1$, the result is clear. Otherwise, we can as before reduce the length of $w$, hence the result.
\end{proof}

A consequence of Proposition \ref{prop:divisiblefreecomplexification} is that if $\G$ is torsion free and has the strong Baum-Connes property, then so does $\widetilde{\G}$. This follows from \cite[Prop 1.28]{arano2015torsion} and \cite[Thm 6.6]{vergnioux2013k} but does not settle the general case. We therefore have to investigate how torsion may pass to the free complexification. Surprisingly, the answer is not as clear as before because it involves the relationship between modules on $R_{\G}$ and modules on $R_{\G_{ev}}$.

Let us fix an orthogonal compact matrix quantum group $(\G, u)$ once and for all. For convenience, we will say that an irreducible representation $\beta\in \Irr(\G)$ is \emph{even} if it is in $\Irr(\G_{ev})$ and \emph{odd} otherwise. Of course, everything is trivial if $\G_{ev} = \G$. We will therefore assume from now on that this is not the case, i.e. that there exist odd representations. Given a torsion module $N$ on $R_{\G}$, we want to understand how it splits over $\G_{ev}$ and in particular how many connected components there will be. This can be captured by the following equivalence relation on the basis elements $J$ of $N$ : $j\sim j'$ if there exists $\beta\in \Irr(\G_{ev})$ such that $j\subset \beta\otimes j'$. This is an equivalence relation since it means that the two basis elements generate the same $R_{\G_{ev}}$-submodule. Intuitively, $\G_{ev}$ is an "index two" subgroup of $\G$ and we may therefore expect that the number of equivalence classes is at most two.

\begin{prop}\label{prop:torsionevenpart}
There are at most two equivalence classes. Moreover, the following are equivalent :
\begin{enumerate}
\item All basis elements have an odd stabilizer.
\item There exists a basis element with an odd stabilizer.
\item There is only one equivalence class, i.e. the module $N$ is $R_{\G_{ev}}$-connected.
\end{enumerate}
\end{prop}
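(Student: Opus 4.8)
The plan is to reduce the entire statement to the existence of a $\Z/2$-grading on $\Irr(\G)$ for which $\Irr(\G_{ev})$ is exactly the degree-zero part. Since $u$ is fundamental and $\overline{u} = u$, every $\delta \in \Irr(\G)$ is contained in some tensor power $u^{\otimes m}$, and I would declare $\delta$ \emph{even} or \emph{odd} according to the parity of $m$. The first task, which I expect to be the main obstacle, is to check that this parity is \emph{well defined}, i.e. that no irreducible sits in both an even and an odd power of $u$. This is exactly where the standing hypothesis $\G_{ev} \neq \G$ is essential: if $\varepsilon$ were contained in some $u^{\otimes(2m+1)}$, then every subrepresentation $\beta \subset u$ would satisfy $\beta \subset u \otimes u^{\otimes(2m+1)} = u^{\otimes(2m+2)}$ and hence be even, forcing every irreducible to be even and thus $\G_{ev} = \G$, a contradiction. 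Granting this, if some $\delta$ were both even and odd then $\varepsilon \subset \delta \otimes \overline{\delta}$ would land in an odd power of $u$ (using $\overline{u} = u$), the same contradiction. Once parity is well defined the grading rule follows: a subrepresentation of $\beta \otimes \gamma$ has parity $\partial\beta + \partial\gamma$, so even$\otimes$even and odd$\otimes$odd are even while even$\otimes$odd is odd, and $\Irr(\G_{ev})$ is precisely the even part.

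For the bound of two classes I would fix a basis element $j_0$ and use $R_\G$-connectedness of the torsion module $N$: every $j \in J$ satisfies $j \subset \beta \otimes j_0$ for some $\beta \in \Irr(\G)$, which is necessarily even or odd. Let $J_0$ (resp. $J_1$) be the set of $j$ reachable with an even (resp. odd) $\beta$, so that $J = J_0 \cup J_1$. By definition $J_0$ is exactly the $\sim$-class of $j_0$. To see that $J_1$ lies in a single class, take $j, j' \in J_1$ with $j \subset \beta \otimes j_0$ and $j' \subset \beta' \otimes j_0$, $\beta, \beta'$ odd; the based-module axiom gives $j_0 \subset \overline{\beta'} \otimes j'$, whence $j \subset \beta \otimes \overline{\beta'} \otimes j'$, and since $\beta \otimes \overline{\beta'}$ is even we conclude $j \sim j'$. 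Thus $J$ is covered by the class of $j_0$ and the class of any element of $J_1$, so there are at most two classes.

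For the three-way equivalence I would run a short cycle. The implication $(1)\Rightarrow(2)$ is immediate. For $(2)\Rightarrow(3)$, suppose $j_1 \subset \beta_1 \otimes j_1$ with $\beta_1$ odd; given any $j$, connectedness gives $j \subset \gamma \otimes j_1$, and when $\gamma$ is odd I rewrite this as $j \subset \gamma \otimes \beta_1 \otimes j_1$ with $\gamma \otimes \beta_1$ even, so in every case $j \sim j_1$ and $N$ is $R_{\G_{ev}}$-connected. For $(3)\Rightarrow(1)$, fix an odd $\gamma_0 \in \Irr(\G)$ (which exists as $\G_{ev} \neq \G$) and any $j$, and pick a basis element $j' \subset \gamma_0 \otimes j$; connectedness of the single class yields an even $\beta$ with $j \subset \beta \otimes j'$, hence $j \subset (\beta \otimes \gamma_0) \otimes j$, so some irreducible component $\delta$ of $\beta \otimes \gamma_0$ stabilizes $j$, and $\delta$ is odd because $\beta$ is even and $\gamma_0$ is odd. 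Thus every basis element has an odd stabilizer. The only delicate point is the well-definedness of parity; once the grading is in place everything else is a direct combinatorial chase through the based-module axiom.
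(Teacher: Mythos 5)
Your proof is correct and follows essentially the same route as the paper's: the observation that odd $\otimes$ odd lands in the even part gives both the two-class bound and $(2)\Rightarrow(3)$, while $(3)\Rightarrow(1)$ tensors an even connector with an odd representation to produce an odd stabilizer. The only difference is that you explicitly check that the parity of the power of $u$ containing a given irreducible is well defined when $\G_{ev}\neq\G$ — a point the paper uses implicitly when asserting that $\gamma\otimes u$ contains an odd stabilizer — and this is a welcome clarification rather than a different argument.
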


\begin{proof}
Let $j_{1}, j_{2}, j_{3}$ be basis elements such that the first two are not equivalent to $j_{3}$. By connectedness, there exists $\beta_{1}, \beta_{2}\in \Irr(\G)$ such that $j_{1}\subset \beta_{1}\otimes j_{3}$ and $j_{3}\subset\beta_{2}\otimes j_{2}$. Moreover, $\beta_{1}$ and $\beta_{2}$ are odd by assumption, so that any subrepresentation of $\beta_{1}\otimes\beta_{2}$ is even. Since $j_{1}\subset (\beta_{1}\otimes \beta_{2})\otimes j_{3}$, we get $j_{1}\sim j_{3}$.

As for the equivalence of the statements, we will prove that each one implies the next one. The first implication is trivial. For the second one, let $j$ be a basis element with an odd stabilizer $\beta$, let $j'$ be another basis element and let $\gamma\in \Irr(\G)$ satisfy $j\subset\gamma\otimes j'$. If $\gamma$ is even, then we are done. Otherwise, $j\subset (\beta\otimes \gamma)\otimes j'$ and the tensor product contains only even representations, hence the result. For the third implication, let $j$ be a basis element and let $j'\subset u\otimes j$. Then, there exists an even representation $\gamma$ such that $j\subset \gamma\otimes j'$, so that $\gamma\otimes u$ contains an odd stabilizer of $j$.
\end{proof}

There is always at least one module not satisfying the above equivalent conditions, namely the standard module of $R_{\G}$, since $\varepsilon$ then has trivial stabilizer. More precisely, as $R_{\G_{ev}}$-modules we have $R_{\G} = R_{\G_{ev}}\oplus R_{odd}$, where $R_{odd}$ is the span of all odd representations (which is also the submodule generated by $u$). However, there can also be other non-standard $R_{\G}$-modules whithout odd stabilizers. For instance, the action of $R_{\Z_{4}}$ on the two-dimensional module $\Z.j_{0}\oplus\Z.j_{1}$ where even elements stabilize each point while odd elements exchange them.

We can now link this with torsion modules on the free complexification. To do this, first note that any torsion module of $R_{\widetilde{\G}}$ can be induced to a torsion module of $\G\ast S^{1}$ and this induced module is by Theorem \ref{thm:actionsfreeproduct} isomorphic to one induced from $R_{\G}$. Thus, we can focus on modules coming from $\G$.

\begin{prop}\label{prop:torsionfreecomplexification}
Let $N$ be a non-standard torsion module of $R_{\G}$ and let $M = \Ind_{\G}^{\G\ast S^{1}}(N)$. Then, $M$ contains one or two non-standard torsion $R_{\widetilde{\G}}$-submodules, denoted by $P$ and $P'$. Moreover, $P = P'$ if and only if $N$ is $R_{\G_{ev}}$-connected.
\end{prop}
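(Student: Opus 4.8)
The plan is to follow the strategy of Lemma \ref{lem:torsionwreathproduct}: analyse the $R_{\widetilde{\G}}$-submodules of $M$ generated by individual basis elements, reduce them to a normal form, and count how many non-standard ones appear. By Lemma \ref{lem:inducedfreeproduct}, a basis of $M$ is given by the elements $wj$ where $j\in J$ and $w$ is an alternating word ending in $\Irr(S^{1})=\{z^{k}\}$ (or $w=\emptyset$). Recall that $\widetilde{\G}$ is generated by $\widetilde{u}=uz$ and, since $u$ is orthogonal, by $\overline{\widetilde{u}}=z^{-1}u$. The whole dichotomy in the statement will be governed by a $\Z/2\Z$-grading. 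For a word $x\in\Irr(\G\ast S^{1})$, let $e(x)$ be the total exponent of $z$ in $x$ and let $p(x)$ be the number of odd $\Irr(\G)$-letters of $x$, both taken modulo $2$, and set $\phi(x)=e(x)+p(x)$. Since even $\otimes$ even and odd $\otimes$ odd are even while even $\otimes$ odd is odd, $p$ is additive under fusion, $e$ is obviously additive, and $\phi(\widetilde{u})=\phi(\overline{\widetilde{u}})=0$. Hence $\phi$ vanishes on all of $\Irr(\widetilde{\G})$.

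The first point is to transport $\phi$ to $M$. One wants to define $\widetilde{\phi}(wj)=\phi(w)+\epsilon(j)$ for a parity function $\epsilon:J\to\Z/2\Z$; compatibility with the absorption rule $w'\beta\otimes_{R_{\G}}j=w'\otimes_{R_{\G}}(\beta\otimes j)$ forces $\epsilon(\beta\otimes j)=\epsilon(j)+p(\beta)$ for every $\beta\in\Irr(\G)$. Such an $\epsilon$ exists if and only if one can $2$-colour the $R_{\G_{ev}}$-components of $N$ so that odd representations switch colours, and by Proposition \ref{prop:torsionevenpart} this is possible exactly when $N$ is \emph{not} $R_{\G_{ev}}$-connected: indeed, an odd stabilizer of a basis element would force $\epsilon(j)=\epsilon(j)+1$, while if there are two $R_{\G_{ev}}$-components then odd representations interchange them (this last fact is again read off from Proposition \ref{prop:torsionevenpart}, since otherwise one produces an odd stabilizer). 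Thus when $N$ is not $R_{\G_{ev}}$-connected, $\widetilde{\phi}$ is a well-defined $\Z/2\Z$-grading preserved by the $R_{\widetilde{\G}}$-action, so $M$ splits accordingly and can contain at most two non-standard submodules; when $N$ is $R_{\G_{ev}}$-connected no such grading exists and we must show directly that everything merges into one.

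The second point is the reduction and linking argument, carried out on basis elements. Using $\widetilde{u}$ and $\overline{\widetilde{u}}$ to raise and lower the exponent of $z$ and to absorb $\Irr(\G)$-letters into $N$, one shows that the $R_{\widetilde{\G}}$-submodule generated by any basis element is either standard or contains a reduced generator of the form $z^{-1}j$ (up to conjugation, equivalently $zj$). These are genuinely non-standard: if $\gamma\in\Irr(\G_{ev})$ stabilizes $j$ in $N$ then $z^{-1}\gamma z\in\Irr(\widetilde{\G})$ (it has the correct grading and lies in the set $W$ of Proposition \ref{prop:divisiblefreecomplexification}), and the cancellation $z\otimes z^{-1}=\varepsilon$ gives $z^{-1}\gamma z\otimes z^{-1}j\supset z^{-1}j$; connectedness and non-standardness of $N$ propagate such a stabilizer to every $z^{-1}j$. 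The same computation with $j'\subset\gamma\otimes j$ for even $\gamma$ shows $N(z^{-1}j)=N(z^{-1}j')$ whenever $j\sim j'$, so all $z^{-1}j$ within a single $R_{\G_{ev}}$-component generate the same submodule. Finally, for the crossing between components one uses an odd connector $j'\subset\beta\otimes j$: then $\beta z\in\Irr(\widetilde{\G})$ and $\beta z\otimes z^{-1}j\supset j'$ (an empty-word element), from which $\overline{\widetilde{u}}$ produces $z^{-1}$-elements attached to the opposite component. When $N$ is $R_{\G_{ev}}$-connected such odd connectors exist between any two basis elements, so all reduced generators collapse to a single submodule and $P=P'$; when $N$ has two components the crossing is compatible with $\widetilde{\phi}$ (one checks $\phi(\beta z)=0$ and $\phi(j')=\phi(z^{-1}j)$), so it stays inside one graded piece and the two values of $\widetilde{\phi}$ give two distinct non-standard submodules $P\neq P'$. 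This yields precisely the asserted equivalence $P=P'\iff N$ is $R_{\G_{ev}}$-connected.

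The main obstacle will be the reduction step, namely proving rigorously that every basis element generates either a standard submodule or one containing some $z^{-1}j$: unlike the $SU_{q}(2)$ case of Lemma \ref{lem:torsionwreathproduct}, the fundamental representation $z$ of $S^{1}$ is not self-conjugate, so the bookkeeping must track the exponent of $z$ together with the interplay between cancellations $z\otimes z^{-1}=\varepsilon$ and absorptions into $N$. The grading $\widetilde{\phi}$ makes the upper bound of two transparent and explains the dichotomy, but establishing that the two submodules in the disconnected case are genuinely distinct (and that in the connected case only one survives) is the delicate part, and is exactly where Proposition \ref{prop:torsionevenpart} must be invoked, together with the membership of the auxiliary elements $z^{-1}\gamma z$ and $\beta z$ in $\Irr(\widetilde{\G})$ via the set $W$ of Proposition \ref{prop:divisiblefreecomplexification}.
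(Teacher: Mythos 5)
Your overall strategy is the paper's: reduce the generator of any $R_{\widetilde{\G}}$-submodule of $M$ to a normal form, observe that the surviving generators are indexed by the $R_{\G_{ev}}$-components of $N$, and invoke Proposition \ref{prop:torsionevenpart} to count these. The $\Z/2$-grading $\widetilde{\phi}$ is a genuinely nice addition: the paper obtains the same dichotomy by noting that an element of $\Irr(\widetilde{\G})$ not containing a power of $z$ must lie in $\Irr(\G_{ev})$, so that two basis elements of $N$ generate the same submodule if and only if they are equivalent in the sense of Proposition \ref{prop:torsionevenpart}; your grading packages this more transparently and immediately yields $P\neq P'$ in the disconnected case. (The paper's normal form is the empty word, i.e.\ the generators are the basis elements $j$ of $N$ themselves rather than $z^{-1}j$; the two are interchangeable since tensoring with $\beta z$ and $z^{-1}\overline{\beta}$, $\beta$ odd, passes back and forth.) Nevertheless there are two genuine gaps. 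First, the reduction step, which you explicitly defer as ``the main obstacle'', is the actual content of the paper's proof: a four-case analysis of a word $w$ of minimal length with $wj\in P$, showing that either the submodule is standard (when $w$ starts with $z^{k}$, $k\neq -1$) or $w$ can be emptied. Announcing this step without carrying out the case analysis leaves the proposal incomplete exactly where the work lies.

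Second, your argument that $P$ (and $P'$) is non-standard does not go through as written. You produce a stabilizer $z^{-1}\gamma z$ of $z^{-1}j$ from a non-trivial \emph{even} stabilizer $\gamma$ of $j$, but non-standardness of $N$ only guarantees, via Lemma \ref{lem:regularmodules}, that every basis element has \emph{some} non-trivial stabilizer, and this may consist entirely of odd representations; an odd stabilizer $\beta$ need not yield an even one, since $\beta\otimes\beta$ can reduce to $\varepsilon$ (take $\G=\widehat{\Z_{2}}$ with $u=s$ and $N=\Z j_{0}$ the trivial module: the only non-trivial stabilizer of $j_{0}$ is $s$, and $s\otimes s=\varepsilon$). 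Moreover, even granting a non-trivial stabilizer of $z^{-1}j$, that alone does not exclude standardness of the submodule it generates: Lemma \ref{lem:regularmodules} only says that an element with \emph{trivial} stabilizer forces standardness, and standard modules contain many elements with non-trivial stabilizers. The paper handles this point globally and by contradiction: if all the distinguished submodules were standard, then $M$ would be standard over $R_{\G\ast S^{1}}$ and hence $N$ standard by Lemma \ref{lem:induceddecomposition}; note that it thereby only asserts that at least one of $P,P'$ is non-standard, which is weaker than what you claim. You should either adopt that argument or justify the existence of the even stabilizers you use.
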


\begin{proof}
Recall that the basis of $M$ consists in elements of the form $wj$ with $w$ a word on $\{z^{k}, k\neq 0\}\cup\Irr(\G)\setminus\{\varepsilon_{\G}\}$ not ending in $\Irr(\G)$. Let $P$ be a $R_{\widetilde{\G}}$-submodule and consider a word $w$ and a basis element $j$ of $N$ such that $wj\in P$ and $w$ has minimal length. We have several cases :
\begin{itemize}
\item If $w$ starts with an element $\gamma$ of $\Irr(\G_{ev})\subset\Irr(\widetilde{\G})$, then letting $\overline{\gamma}$ act we see that we can reduce the length of $w$, contradicting minimality.
\item If $w$ starts with $z^{k}$ with $k\neq -1$, then the description of the inclusion $\Irr(\widetilde{\G})\subset \Irr(\G\ast S^{1})$ above implies that corresponding module is standard.
\item If $w$ starts with an element $\beta$ of $\Irr(\G)\setminus\Irr(\G_{ev})$ and is of the form $\beta z^{k}w'$, then tensoring with $z^{-1}\overline{\beta}$ shows that we can replace $w$ by $z^{k-1}w'$, contradicting minimality.
\item If $w = \beta\in\Irr(\G)\setminus\Irr(\G_{ev})$, tensoring by $z^{-1}\overline{\beta}$ shows that we can assume $w=z^{-1}$. Let then $j'\neq j$ be such that there is $\gamma\in \Irr(\G)\setminus\Irr(\G_{ev})$ such that $j'\subset \gamma\otimes j$. Then $j'\subset \gamma z\otimes z^{-1}j$, contradicting minimality.
\end{itemize}
As a conclusion, the word $w$ must be empty and we have to consider submodules generated by basis elements of $N$. Let $j$ be such a basis element and let $P_{j}$ be the $R_{\widetilde{\G}}$-submodule generated by $j$. Because $R_{\G_{ev}}\subset R_{\widetilde{\G}}$, if $j\sim j'$ then $P_{j} = P_{j'}$. Reciprocally, any word $w\in \Irr(\widetilde{\G})$ of length at most two contains a power of $z$, hence cannot connect two basis elements of $N$. Thus, by Proposition \ref{prop:torsionevenpart}, all the sub-modules $P_{j}$ coincide if and only if $N$ is connected as a $R_{\G_{ev}}$-module. Otherwise, we get two distinct submodules $P$ and $P'$.

Assume that $P$ and $P'$ are both standard. Then, $M$ was the standard module of $R_{\G\ast S^{1}}$, hence by Theorem \ref{thm:actionsfreeproduct} $N$ was standard, contradicting the assumption. As a consequence, there is at least one non-standard torsion $R_{\widetilde{\G}}$-submodule.
\end{proof}

The main problem with the previous statement is that when all the stabilizers are even, we get two non-standard modules so that there may be more torsion in $\widetilde{\G}$ than in $\G$. It is moreover not clear whether the torsion modules obtained are isomorphic to one another, or isomorphic to those arising from other torsion modules. At least, in the particular case of $\widetilde{H}_{N}^{+}$ we can settle this issue. Recall that $H_{N}^{+} = \Z_{2}\wr_{\ast}S_{N}^{+}$ is the free hyperoctahedral quantum group and $\widetilde{H}_{N}^{+}$ is its free complexification.

\begin{cor}\label{cor:torsioncomplexifiedhyperoctahedral}
The quantum group $\widetilde{H}_{N}^{+}$ has exactly two non-trivial torsion actions up to equivariant Morita equivalence.
\end{cor}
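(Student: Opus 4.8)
The plan is to list the non-standard torsion $R_{\widetilde{H}_N^+}$-modules by combining the free wreath product classification with Proposition \ref{prop:torsionfreecomplexification}, and then to transport the count to actions via \cite{de2013tannaka}. Since $H_N^+ = \Z_2\wr_{\ast}S_N^+$ (in Bichon's sense), Theorem \ref{thm:torsionwreathproduct} identifies the non-standard torsion modules of $R_{H_N^+}$ with \emph{all} the torsion modules of $R_{\Z_2}$, of which there are exactly two: the standard (regular) module and the one-dimensional module $\Z.j_0$ with $s\otimes j_0 = j_0$. I write $N_0$ and $N_1$ for the associated torsion modules of $R_{H_N^+}$. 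The first task is to decide, for each, whether it is connected over the even part $(H_N^+)_{ev}$ by means of Proposition \ref{prop:torsionevenpart}.

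For $N_0$ I would use the explicit embedding $\Lambda$ recorded before Lemma \ref{lem:torsionwreathproduct}: the fundamental representation $u^1 s u^1$ is odd, and $u^1 s u^1\otimes u^1 j_0$ contains $u^1 s j_0 = u^1 j_0$, so $u^1 j_0$ has an odd stabilizer. By Proposition \ref{prop:torsionevenpart}, $N_0$ is $(H_N^+)_{ev}$-connected, and Proposition \ref{prop:torsionfreecomplexification} then yields a single non-standard torsion module $Q$ of $R_{\widetilde{H}_N^+}$. For $N_1$, the number of copies of $s$ modulo $2$ is a grading preserved by the even part, and the two minimal basis elements $u^1\varepsilon$ and $u^1 s$ lie in different classes; hence $N_1$ is not $(H_N^+)_{ev}$-connected, and Proposition \ref{prop:torsionfreecomplexification} produces two non-standard modules $P$ and $P'$. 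I would then note that multiplication by $s$ is an automorphism of the standard $R_{\Z_2}$-module exchanging $\varepsilon$ and $s$; inducing it gives an automorphism of $N_1$ swapping the two even components, and inducing further to $R_{H_N^+\ast S^1}$ and restricting to $R_{\widetilde{H}_N^+}$ exhibits an isomorphism $P\cong P'$. Finally, the discussion preceding Proposition \ref{prop:torsionfreecomplexification} together with Theorem \ref{thm:actionsfreeproduct} shows that every non-standard torsion module of $R_{\widetilde{H}_N^+}$ is induced from $N_0$ or $N_1$, so the list $\{[P],[Q]\}$ is exhaustive.

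The hard part will be to prove that $[P]\neq[Q]$, i.e.\ that the module coming from the defining action $N_1$ is genuinely different from the one coming from $N_0$. This is delicate because the two modules agree on all \emph{local} data: their restrictions to $(H_N^+)_{ev}$ have the same dimension spectrum, the minimal object has the same purely even stabilizer in both, and the low-degree fusion graphs are indistinguishable. The only structural difference is that the fundamental representation \emph{fixes} the relevant even component for $N_0$ but \emph{permutes} the two components for $N_1$. My proposed route is to promote this into a global invariant via the $\Z_2$-grading $\deg\colon R_{\widetilde{H}_N^+}\to\Z_2$ given by the number of $s$'s modulo $2$ (with $\deg(z)=0$, so $\deg(\widetilde{u})=1$): the module $P$ inherits a coloring of its basis compatible with $\deg$ from the consistent $s$-parity on $N_1$, whereas the relation $s\otimes j_0 = j_0$ deprives $j_0$ of a well-defined parity and obstructs any such compatible coloring of $Q$. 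The main technical point is to verify that this obstruction survives free complexification, i.e.\ that $Q$ admits no $\deg$-compatible grading over $R_{\widetilde{H}_N^+}$; concretely one must exhibit two basis elements joined both by an even representation and by a $z$-degree-zero representation of odd degree (built from three copies of $\widetilde{u}$), which is where the odd stabilizer of $N_0$ re-enters. Once this is settled, $\{[P],[Q]\}$ consists of exactly two classes.

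To conclude, I would recast the entire argument in the language of module C*-categories, exactly as in the proofs of Theorems \ref{thm:actionsfreeproduct} and \ref{thm:torsionwreathproduct}, and use \cite{de2013tannaka} to pass between fusion modules and equivariant Morita equivalence classes of torsion actions. Both $P$ and $Q$ genuinely arise from actions, being induced from the two torsion actions of $H_N^+$ furnished by Theorem \ref{thm:torsionwreathproduct}, so no module is lost in the translation. This yields exactly two non-trivial torsion actions of $\widetilde{H}_N^+$ up to equivariant Morita equivalence.
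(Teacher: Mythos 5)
Most of your proposal runs parallel to the paper's proof: the identification of the two torsion modules $N_{0}$ and $N_{1}$ of $R_{H_{N}^{+}}$ via Theorem \ref{thm:torsionwreathproduct}, the connectivity analysis through Proposition \ref{prop:torsionevenpart} (the odd stabilizer $u^{1}su^{1}$ for $N_{0}$, purely even stabilizers for $N_{1}$), and the automorphism $\theta$ swapping $j_{+}$ and $j_{-}$ to get $P\simeq P'$ are exactly the paper's steps. The gap is in the step you yourself single out as the hard part, namely $[P]\neq[Q]$, and the grading argument you propose cannot work. Let $\deg_{s}$ be the number of $s$'s modulo $2$ and $\deg_{z}$ the total $z$-winding number; both are multiplicative on $\Irr(H_{N}^{+}\ast S^{1})$. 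The generator of $\Irr(\widetilde{H}_{N}^{+})$ is $\widetilde{u}=uz$, for which $\deg_{s}=\deg_{z}=1$, so \emph{every} irreducible representation of $\widetilde{H}_{N}^{+}$ satisfies $\deg_{s}\equiv\deg_{z}\pmod 2$. Consequently there is no representation of $\widetilde{H}_{N}^{+}$ with $z$-degree zero and odd $s$-degree (a fortiori not one built from three copies of $\widetilde{u}$ and its conjugate, whose total $z$-degree is always odd), so the configuration you want to exhibit inside $Q$ does not exist. Worse, the assignment $wj'\mapsto\deg_{z}(w)\bmod 2$ on the basis of $\Ind_{H_{N}^{+}}^{H_{N}^{+}\ast S^{1}}(N_{0})$ \emph{is} a $\deg_{s}$-compatible coloring of $Q$: the odd stabilizer $u$ of $j_{0}$ does not belong to $\Irr(\widetilde{H}_{N}^{+})$, and any element of $\Irr(\widetilde{H}_{N}^{+})$ of odd $s$-degree has odd $z$-degree and therefore shifts this coloring coherently. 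So the obstruction you rely on does not survive free complexification, and your invariant assigns the same answer to $P$ and $Q$.

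The paper settles $[P]\neq[Q]$ by a different and much softer argument, which you should substitute for the grading step: if the torsion actions corresponding to $P$ and $Q$ were equivariantly Morita equivalent over $\widetilde{H}_{N}^{+}$, then so would be their inductions to $H_{N}^{+}\ast S^{1}$, and the injectivity half of Theorem \ref{thm:actionsfreeproduct} would force them to come from the same torsion action of $H_{N}^{+}$; since $N_{0}\not\simeq N_{1}$, this is impossible. With that replacement (and the final passage to module C*-categories via \cite{de2013tannaka}, which you already have), the proof matches the paper's.
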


\begin{proof}
By Theorem \ref{thm:torsionwreathproduct}, $H_{N}^{+}$ has two non-trivial torsion actions up to equivariant Morita equivalence. One of them comes from the trivial $R_{\Z_{2}}$-module $\Z.j_{0}$. At the level of the free wreath product, it is generated by $u^{1}j_{0}$ and this element has the odd stabilizer $u^{1}su^{1}$, where $s$ denotes the generator of $\Z_{2}$ seen as an irreducible representation. Thus, $P = P'$ in that case.

As for the second one, it comes from the standard module $\Z.j_{+}\oplus \Z j_{-}$ of $\Z_{2}$ where $s$ exchanges the two basis elements. In that case, all stabilizers are even so that $P \neq P'$. However, there is a $R_{H_{N}^{+}}$-equivariant automorphism $\theta$ of $N$ given by $u^{1}j_{+} \to u^{1}j_{-}$ which restricts to an isomorphism between the two summands of the restriction of $N$ to the even part of $H_{N}^{+}$. Thus, $P$ is isomorphic to $P'$.

As before, the proof can be recast in the context of module categories to yield the result. Moreover, if two torsion actions are equivariantly Morita equivalent, so is their induction to the free product so that by Theorem \ref{thm:actionsfreeproduct} we can conclude that they come from the same torsion action of $H_{N}^{+}$.
\end{proof}

\subsection{Application to the Baum-Connes conjecture}

Classifying the torsion actions of a discrete quantum group is a first step towards the statement of a suitable form of the Baum-Connes conjecture. In this section we will give such a statement and prove it, essentially following \cite{voigt2015structure}. The argument involves duality for quantum groups and therefore requires some conventions. Given a compact quantum group $\G$, we will denote by $\widehat{\G}$ its dual discrete quantum group. The C*-algebra $C^{*}(\widehat{\G})$ is then isomorphic to $C(\G)$ and the Baum-Connes property concerns $\widehat{\G}$. More precisely, it concerns the category $KK^{\widehat{\G}}$ of $\widehat{\G}$-C*-algebras with morphisms given by the equivariant KK-groups. By Baaj-Skandalis duality \cite{baaj1993unitaires}, $A\rightarrow \G\ltimes A$ implements an equivalence of categories $KK^{\G}\rightarrow KK^{\widehat{\G}}$ so that we can chose either the compact or discrete point of view. Let now $\G$ be a compact quantum group and let $C(\HH_{q})\subset C(\G\ast SU_{q}(2))$ be the compact quantum group defined in Theorem \ref{thm:lemeuxtarrago}. Monoidal equivalence implements an equivalence of categories $KK^{\G\wr_{\ast}S_{N}^{+}}\rightarrow KK^{\HH_{q}}$ so that it is enough to study $\HH_{q}$ or its dual.

We now introduce some terminology from \cite{meyer2008homological}. Let $\T_{\HH_{q}}\subset KK^{\HH_{q}}$ be the full subcategory generated by all $\HH_{q}$-C*-algebras of the form $B\otimes C$ where $B$ is a torsion action of $\HH_{q}$ and the action is only on the first tensor factor. It is known by \cite{meyer2006baum} that $KK^{\HH_{q}}$ is a triangulated category so that we may consider the localizing subcategory $\langle \T_{\HH_{q}}\rangle\subset KK^{\HH_{q}}$ generated by $\T_{\HH_{q}}$ and denote by $\langle\CI_{\widehat{\HH}_{q}}\rangle\subset KK^{\widehat{\HH}_{q}}$ the corresponding subcategory. We will say that the discrete quantum group $\widehat{\HH}_{q}$ satisfies the \emph{strong Baum-Connes property} if $\langle\CI_{\widehat{\HH}_{q}}\rangle$ is equal to the whole category $KK^{\widehat{\HH}_{q}}$.

We will prove this property in the case of a torsion-free $\G$. The reason for this assumption is that there is only one non-trivial torsion action, which can moreover be nicely described. Let $S$ be the subset of irreducible representations of $\G\ast SU_{q}(2)$ generated by the action of $\Irr(\HH_{q})$ on $u^{1}$ and let us consider inside $C^{*}(\G\ast SU_{q}(2))$ the closure $A_{q}$ of the direct sum of coefficients of representations in $S$. Then, $C^{*}(\G\ast SU_{q}(2))$ splits as a $\widehat{\HH}_{q}$-C*-algebra into a direct sum of $A_{q}$ and copies of $C^{*}(\HH_{q})$ by Theorem \ref{thm:torsionwreathproduct}. We first want to indentify $A_{q}$ as a crossed product, i.e. an image under Baaj-Skandalis duality.

\begin{lem}\label{lem:canonicaltorsion}
Let $(M_{2}(\C), \alpha_{q})$ be the torsion action of $\HH_{q}$ defined in Subsection \ref{subsec:wreathproduct}. Then, $A_{q}$ is equivariantly Morita equivalent to $\HH_{q}\ltimes M_{2}(\C)$.
\end{lem}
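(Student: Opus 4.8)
The plan is to realize both $A_{q}$ and $\HH_{q}\ltimes M_{2}(\C)$ as the \emph{same} $\widehat{\HH}_{q}$-C*-algebra attached to the non-standard torsion module $N_{u^{1}}$ of Lemma \ref{lem:torsionwreathproduct}, and then to produce the equivariant Morita equivalence from the generating representation $u^{1}$. First I would pin down the $\widehat{\HH}_{q}$-C*-algebra structure on $A_{q}$. By construction $A_{q}$ is the direct summand of $C^{*}(\G\ast SU_{q}(2))$ supported on the orbit $S$ of $u^{1}$ under $\Irr(\HH_{q})$, and $S$ is precisely the basis of the module $N_{u^{1}}$ (here $\G$ is torsion-free, so the ambient induced module is $\Ind_{\G}^{\G\ast SU_{q}(2)}$ of the standard $R_{\G}$-module and $N_{u^{1}}$ is its unique non-standard summand, by Lemma \ref{lem:torsionwreathproduct}). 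The $\widehat{\HH}_{q}$-action on $A_{q}$ is the one inherited from the quotient $\widehat{\G\ast SU_{q}(2)}\twoheadrightarrow\widehat{\HH}_{q}$ dual to the inclusion $C(\HH_{q})\subset C(\G\ast SU_{q}(2))$, and the block decomposition of $A_{q}$ is indexed by $S$.

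On the other side, by Baaj--Skandalis duality \cite{baaj1993unitaires} the crossed product $\HH_{q}\ltimes M_{2}(\C)$ is the image of $(M_{2}(\C),\alpha_{q})\in KK^{\HH_{q}}$ in $KK^{\widehat{\HH}_{q}}$, and its equivariant structure is governed by the module C*-category associated with $\alpha_{q}$, whose Grothendieck group is the fusion module of $(M_{2}(\C),\alpha_{q})$. As explained just before the lemma and in the proof of Lemma \ref{lem:torsionwreathproduct}, this fusion module is again $N_{u^{1}}$. I would therefore exhibit a $\widehat{\HH}_{q}$-equivariant imprimitivity bimodule between $\HH_{q}\ltimes M_{2}(\C)$ and $A_{q}$ using the Tannaka--Krein description of ergodic actions of \cite{de2013tannaka}: the bimodule is built from the spectral subspace of $M_{2}(\C)$ generating $u^{1}$, concretely the completion of the intertwiner spaces $\mathrm{Hom}(\sigma,u^{1}\otimes\,\cdot\,)$ for $\sigma\in S$, on which $\HH_{q}\ltimes M_{2}(\C)$ and $A_{q}$ act on the two sides; fullness then follows from connectedness of $N_{u^{1}}$ and positivity of the structure constants.

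The routine part is the underlying $*$-isomorphism of the ambient C*-algebras: up to Morita reduction, both $\HH_{q}\ltimes M_{2}(\C)$ and $A_{q}$ are direct sums of matrix blocks indexed by the data of $N_{u^{1}}$, and the matching is forced once the fusion modules agree. The main obstacle is to verify that the two $\widehat{\HH}_{q}$-actions correspond under this identification, that is, to track the equivariant structure simultaneously through the inclusion $C(\HH_{q})\subset C(\G\ast SU_{q}(2))$ and through the Baaj--Skandalis dual of $\alpha_{q}$. I expect this bookkeeping, rather than any single estimate, to carry the real content, and it is exactly here that realizing $(M_{2}(\C),\alpha_{q})$ as the projective action of the subgroup $\langle u^{2}\rangle\cong SO_{q}(3)$ of $\HH_{q}$ (whose coefficients lie in $u^{1}\otimes u^{1}=u^{0}\oplus u^{2}$) makes the comparison tractable and keeps the argument parallel to the $SO_{q}(3)$ computation of \cite{voigt2015structure}.
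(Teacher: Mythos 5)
Your proposal is essentially correct, but it follows the route that the paper only mentions in passing, namely the construction of an explicit imprimitivity bimodule in the style of \cite[Lem 5.1]{voigt2015structure}. The paper's actual proof is much shorter and entirely abstract: under Baaj--Skandalis duality the $\widehat{\HH}_{q}$-algebra $A_{q}$ corresponds to the unique non-trivial summand of the restriction to $\HH_{q}$ of the trivial action of $\G\ast SU_{q}(2)$; by Theorem \ref{thm:torsionwreathproduct} (applied to $(M_{2}(\C),\alpha_{q})$, whose induction to $\G\ast SU_{q}(2)$ is trivial) that summand is equivariantly Morita equivalent to $(M_{2}(\C),\alpha_{q})$, and dualizing back --- Baaj--Skandalis duality preserves equivariant Morita equivalence --- gives $A_{q}\sim\HH_{q}\ltimes M_{2}(\C)$. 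What the paper's argument buys is that all the categorical bookkeeping you worry about in your last paragraph has already been done in Theorem \ref{thm:torsionwreathproduct}; what your explicit bimodule buys is a concrete equivalence, which can be useful in the K-theory computations of Section \ref{sec:ktheory} but is not needed for the lemma itself.

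One step of your write-up is overstated: you say the matching of the two algebras ``is forced once the fusion modules agree.'' An isomorphism of fusion modules is only the decategorified shadow of an equivalence of module C*-categories and does not by itself yield equivariant Morita equivalence of the actions --- this is exactly the gap between torsion modules and torsion actions that the paper is careful about throughout Section \ref{sec:torsion}. To close it you must either carry out the spectral-subspace bimodule construction in full (verifying fullness of both inner products and equivariance, as in Voigt's proof), or invoke the categorical uniqueness statement already contained in Theorem \ref{thm:torsionwreathproduct}, which is what the paper does.
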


\begin{proof}
By Theorem \ref{thm:torsionwreathproduct}, the image of $A_{q}$ under Baaj-Skandalis duality is the only non-trivial summand in the restriction of the trivial action of $\G\ast SU_{q}(2)$ to $\HH_{q}$, hence it is equivariantly Morita equivalent to $(M_{2}(\C), \alpha_{q})$. Taking duals again yields the result. Note that one can also produce an explicit imprimitivity bimodule by mimicking the proof of \cite[Lem 5.1]{voigt2015structure}.
\end{proof}

We are now ready to prove our main result concerning the strong Baum-Connes property.

\begin{thm}\label{thm:baumconnestorsionfree}
Let $\G$ be a torsion-free discrete quantum group satisfying the strong Baum-Connes property. Then, $\G\wr_{\ast}S_{N}^{+}$ satisfies the strong Baum-Connes property, i.e. $\left\langle\T_{\G\wr_{\ast}S_{N}^{+}}\right\rangle = KK^{\G\wr_{\ast}S_{N}^{+}}$.
\end{thm}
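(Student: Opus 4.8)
The plan is to work with the monoidally equivalent model $\HH_q\subset \G\ast SU_q(2)$ and exploit the triangulated-category machinery of Meyer--Nest, transferring the strong Baum--Connes property from the factors to the free product and then descending to the subgroup $\HH_q$. First I would reduce everything to $\HH_q$: by monoidal equivalence there is an equivalence of categories $KK^{\G\wr_\ast S_N^+}\to KK^{\HH_q}$ carrying $\T_{\G\wr_\ast S_N^+}$ to $\T_{\HH_q}$, so it suffices to prove $\langle\T_{\HH_q}\rangle = KK^{\HH_q}$. Since $SU_q(2)$ is torsion-free with the strong Baum--Connes property (by \cite{voigt2011baum}) and $\G$ is assumed torsion-free with the strong Baum--Connes property, the first task is to establish that the free product $\G\ast SU_q(2)$ also has the strong Baum--Connes property. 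This is exactly where the torsion classification enters: because both factors are torsion-free, Theorem \ref{thm:actionsfreeproduct} tells us that every torsion action of the free product is induced from a trivial action of a factor, so the only compactly induced building blocks are (up to Morita equivalence) those coming from $\alpha_\gamma$'s. I would then invoke the permanence of the strong Baum--Connes property under free products, following the argument of Vergnioux--Voigt in \cite{vergnioux2013k}, to conclude $\langle\CI_{\widehat{\G\ast SU_q(2)}}\rangle = KK^{\widehat{\G\ast SU_q(2)}}$.

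The second and central step is the descent from $\G\ast SU_q(2)$ to the divisible (here: conveniently embedded) subgroup $\HH_q$. The mechanism, following \cite{voigt2015structure}, is to analyze how the regular object $C^\ast(\G\ast SU_q(2))$ decomposes as a $\widehat{\HH}_q$-C*-algebra. By Theorem \ref{thm:torsionwreathproduct} and the discussion preceding Lemma \ref{lem:canonicaltorsion}, restricting the trivial action of $\G\ast SU_q(2)$ to $\HH_q$ splits $C^\ast(\G\ast SU_q(2))$ into a direct sum of copies of $C^\ast(\HH_q)$ and the single non-trivial summand $A_q$, which by Lemma \ref{lem:canonicaltorsion} is equivariantly Morita equivalent to $\HH_q\ltimes M_2(\C)$, hence lies in $\T_{\HH_q}$. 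I would use this to show that the restriction functor $\Res^{\G\ast SU_q(2)}_{\HH_q}$ sends the generating compactly induced objects of $\G\ast SU_q(2)$ into the localizing subcategory $\langle\T_{\HH_q}\rangle$: an object compactly induced over $\G\ast SU_q(2)$ is built from a torsion action of $\G\ast SU_q(2)$, and restricting such an action to $\HH_q$ decomposes it (again via Theorem \ref{thm:torsionwreathproduct}) into trivial pieces and pieces Morita equivalent to the canonical torsion action, all of which are compactly induced over $\HH_q$.

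The final assembly is a standard triangulated-category argument. Since the strong Baum--Connes property for $\G\ast SU_q(2)$ means $\langle\T_{\G\ast SU_q(2)}\rangle = KK^{\G\ast SU_q(2)}$, and since restriction $\Res^{\G\ast SU_q(2)}_{\HH_q}$ is a triangulated functor that carries the generators $\T_{\G\ast SU_q(2)}$ into the localizing subcategory $\langle\T_{\HH_q}\rangle$, it follows that $\Res$ maps the whole category $KK^{\G\ast SU_q(2)}$ into $\langle\T_{\HH_q}\rangle$. It then remains to check that every object of $KK^{\HH_q}$ is a retract of (the restriction of) an object induced up from $\G\ast SU_q(2)$, so that $\langle\T_{\HH_q}\rangle$ contains all of $KK^{\HH_q}$; this is where divisibility, or more precisely the explicit decomposition of the regular object above, does the work, exactly as in \cite[Sec 5]{voigt2015structure}.

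I expect the main obstacle to be the descent step, specifically verifying that restriction along $\HH_q\subset\G\ast SU_q(2)$ behaves well enough with respect to compact induction. The subtlety is that restricting a compactly induced $\G\ast SU_q(2)$-C*-algebra must be shown to land in $\langle\T_{\HH_q}\rangle$ rather than merely in some larger subcategory, and this relies essentially on the fact that the torsion of $\HH_q$ is completely controlled by Theorem \ref{thm:torsionwreathproduct}. The torsion-freeness of $\G$ is what makes this tractable, since by the Corollary there is then a \emph{unique} non-trivial torsion action $(\C^N,\alpha_N)$, so the torsion subcategory $\T_{\HH_q}$ is generated by just two building blocks (the trivial one and $\HH_q\ltimes M_2(\C)$), and Lemma \ref{lem:canonicaltorsion} identifies the latter concretely. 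The remaining routine points — that the relevant functors are triangulated and compatible with direct sums, and that the splitting of the regular object is equivariant — follow the template of \cite{voigt2015structure} and \cite{vergnioux2013k} without new difficulty.
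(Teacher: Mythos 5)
Your proposal is correct and follows essentially the same route as the paper: reduce to $\HH_{q}$ by monoidal equivalence, induce up to the torsion-free free product $\G\ast SU_{q}(2)$ where the strong Baum--Connes property holds by \cite{vergnioux2013k}, restrict back using the decomposition $C^{*}(\G\ast SU_{q}(2)) = A_{q}\oplus C^{*}(\HH_{q})^{\oplus\N}$ together with Lemma \ref{lem:canonicaltorsion}, and conclude because $B$ is a retract of $\Res\circ\Ind(B)$ as in \cite[Thm 5.2]{voigt2015structure}. All the ingredients you identify (torsion-freeness of the free product via Theorem \ref{thm:actionsfreeproduct}, the splitting of the regular object, stability of localizing subcategories under retracts) are exactly those used in the paper's proof.
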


\begin{proof}
By monoidal equivalence, it is enough to prove the result for $\HH_{q}$ and even for $\widehat{\HH}_{q}$. So let $B$ be any $\widehat{\HH}_{q}$-C*-algebra and let $B' = \Ind_{\widehat{\HH}_{q}}^{\widehat{\G}\ast \widehat{SU}_{q}(2)}(B)$ be its induction. Because
\begin{equation*}
\left\langle\T_{\G\ast SU_{q}(2)}\right\rangle = KK^{\G\ast SU_{q}(2)}
\end{equation*}
by \cite{vergnioux2013k}
and $\G\ast SU_{q}(2)$ is torsion-free by Theorem \ref{thm:actionsfreeproduct}, $B'$ is in the localizing subcategory generated by elements of the form $C^{*}(\G\ast SU_{q}(2))\otimes D$ where the action is only on the first factor. Using the fact that $C^{*}(\G\ast SU_{q}(2)) = A\oplus C^{*}(\HH_{q})^{\oplus \N}$, we see that $B'' = \Res^{\widehat{\G}\ast \widehat{SU}_{q}(2)}_{\widehat{\HH}_{q}}(B')$ is in $\left\langle\CI_{\widehat{\HH}_{q}}\right\rangle$. Moreover, $B$ is a direct summand in $B''$ as explained in the proof of \cite[Thm 5.2]{voigt2015structure}. Since triangulated subcategories are stable under retracts, $B$ is in $\left\langle\CI_{\widehat{\HH}_{q}}\right\rangle$ and we conclude by Baaj-Skandalis duality.
\end{proof}

\begin{rem}
As indicated to us by the referee, the idea of the proof (which comes from \cite[Thm 5.2]{voigt2015structure}) yields a more general statement : the strong Baum-Connes property passes to discrete quantum subgroups. Indeed, if $C(\HH)\subset C(\G)$ and if $(A, \alpha)$ is a torsion action of $\G$, we can consider the corresponding module C*-category $\CC$ over $\Rep(\G)$, which is cofinite and connected. Since cofiniteness passes to subcategories, it follows that the restriction of $\CC$ to $\Rep(\HH)$ splits as a sum of cofinite connected module C*-categories. By \cite[Lem 3.11]{arano2015torsion}, any such C*-category comes from a torsion action so that we have proved that $\CI$ is preserved under the restriction functor and this is the only fact we used in the proof if Theorem \ref{thm:baumconnestorsionfree}.
\end{rem}

We can also get another structure result for $\G\wr_{*}S_{N}^{+}$, called $K$-amenability but this requires some terminology.

\begin{de}
A $\G$-C*-algebra $A$ is said to be \emph{proper almost homogeneous} if it is equivariantly Morita equivalent to $\G\ltimes B$ for some torsion action $B$ of $\G$.
\end{de}

In particular, all elements of $\CI_{\widehat{\HH}_{q}}$ are proper almost homogeneous tensored by a trivial action by Lemma \ref{lem:canonicaltorsion}. This property passes to the generated subcategory, that is to say to all of $KK^{\widehat{\HH}_{q}}$ by Theorem \ref{thm:baumconnestorsionfree}. Because the action of a discrete quantum group on a proper almost homogeneous C*-algebra is amenable by \cite[Lem 4.6]{voigt2015structure}, we conclude that $\widehat{\HH}_{q}$ is $K$-amenable. The property of being proper almost homogeneous is stable under monoidal equivalence, thus the same reasoning works for $\G\wr_{\ast} S_{N}^{+}$, yielding

\begin{cor}
Let $\G$ be a torsion-free compact quantum group satisfying the strong Baum-Connes property. Then, the dual of $\G\wr_{\ast}S_{N}^{+}$ is $K$-amenable.
\end{cor}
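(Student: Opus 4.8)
The plan is to deduce $K$-amenability from the strong Baum-Connes property established in Theorem \ref{thm:baumconnestorsionfree}, using the notion of proper almost homogeneity together with Voigt's amenability criterion. Since both the strong Baum-Connes property and $K$-amenability are invariant under monoidal equivalence and Baaj-Skandalis duality, it suffices to prove that the discrete dual $\widehat{\HH}_{q}$ is $K$-amenable; the conclusion for $\G\wr_{\ast}S_{N}^{+}$ then follows by transporting everything back through the monoidal equivalence of Theorem \ref{thm:lemeuxtarrago}, because the relevant structures are preserved under that equivalence.

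First I would observe that every generator of $\CI_{\widehat{\HH}_{q}}$ is proper almost homogeneous. Indeed, a compactly induced object is, up to equivariant Morita equivalence, of the form $\widehat{\HH}_{q}\ltimes B$ for a torsion action $B$, which is exactly the definition of being proper almost homogeneous; Lemma \ref{lem:canonicaltorsion} provides the concrete identification in the case of the only non-trivial torsion action $(M_{2}(\C),\alpha_{q})$. By \cite[Lem 4.6]{voigt2015structure}, the $\widehat{\HH}_{q}$-action on any proper almost homogeneous C*-algebra is amenable, so the canonical morphism from the maximal to the reduced crossed product is a $KK^{\widehat{\HH}_{q}}$-equivalence on each such object.

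The core of the argument is then to propagate this from the generators to the whole category. The precise statement needed is not that proper almost homogeneity is literally inherited, but that the full subcategory of those $\widehat{\HH}_{q}$-C*-algebras on which the comparison map between the maximal and reduced crossed products is an isomorphism in $KK$ is localizing: it is closed under suspensions, mapping cones, countable direct sums and, crucially, under retracts. This subcategory contains $\CI_{\widehat{\HH}_{q}}$ by the previous step, hence contains $\langle\CI_{\widehat{\HH}_{q}}\rangle$, which equals all of $KK^{\widehat{\HH}_{q}}$ by Theorem \ref{thm:baumconnestorsionfree}. In particular it contains the unit object, which is precisely the assertion that $\widehat{\HH}_{q}$ is $K$-amenable.

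I expect the main obstacle to be the verification that this comparison subcategory is genuinely localizing, in particular stable under retracts, and that amenability of the action on the generators really forces the maximal and reduced crossed products to agree at the level of $KK^{\widehat{\HH}_{q}}$. Both points are built into the machinery of \cite{voigt2015structure}, so the cleanest route is to cite that reference for the compatibility of the comparison functor with the triangulated structure; reconstructing that compatibility from scratch is where the real technical work would lie.
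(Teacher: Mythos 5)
Your proposal follows essentially the same route as the paper: generators of $\CI_{\widehat{\HH}_{q}}$ are proper almost homogeneous via Lemma \ref{lem:canonicaltorsion}, the action on such algebras is amenable by \cite[Lem 4.6]{voigt2015structure}, the conclusion propagates to the whole category using Theorem \ref{thm:baumconnestorsionfree}, and everything transports through the monoidal equivalence. If anything, your formulation of the propagation step --- working with the localizing subcategory of objects on which the maximal-to-reduced comparison map is a $KK$-equivalence, rather than claiming proper almost homogeneity itself is inherited --- is the more precise version of what the paper asserts.
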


Note that because all our arguments carry through monoidal equivalence, the result holds for any free wreath product by an arbitrary quantum automorphism group of a finite-dimensional C*-algebra with a distinguished state in the sense of \cite{fima2015free}.

\section{K-theory computations}\label{sec:ktheory}

From now on, $\G$ denotes a torsion-free compact quantum group with the strong Baum-Connes property and we set $G = \G\ast SU_{q}(2)$. We will use the previous results to compute the K-theory of the C*-algebras associated to the compact quantum group $\HH_{q} = \G\wr_{\ast} SO_{q}(3)$. The point here is that the corresponding C*-algebra sits inside $C(G)$ so that we can use restriction to produce an exact sequence for the K-theory of $\HH_{q}$ from one for $G$. More precisely, assume that we have a projective resolution of $\C$ of length one in $KK^{\widehat{G}}$
\begin{equation*}
0 \longrightarrow B_{1} \longrightarrow B_{0} \longrightarrow \C \longrightarrow 0.
\end{equation*}
The general theory of \cite{meyer2006baum} then implies that the Dirac element $\widetilde{\C}$ sits in an exact triangle
\begin{equation}
B_{1} \longrightarrow B_{0} \longrightarrow \widetilde{\C} \longrightarrow \Sigma B_{1}.
\end{equation}
The image of this triangle in $KK^{\widehat{\HH}_{q}}$ is still exact, hence yields a six-term exact sequence in K-theory, from which we can compute the K-theory groups of $C^{*}(\widehat{\HH}_{q}) = C(\HH_{q})$.

This strategy was used in \cite[Sec 5]{voigt2012quantum} for quantum automorphism groups of matrices, which is the case where $\G$ is trivial. In the general case, the computations can be split into two parts, one of which only depends on $SU_{q}(2)$ and is very similar to those of \cite[Sec 5]{voigt2012quantum}. We will therefore deal with this first part separately.

\subsection{Preliminary computations}

We denote by $u$ the fundamental representation of $SU_{q}(2)$ and by $p_{u}$ the minimal central projection associated to $u$ in $C_{0}(\widehat{G})$. Define, following \cite{voigt2011baum}, a KK-theory element $T_{u}\in KK^{\widehat{\HH}_{q}}(C_{0}(\widehat{G}), C_{0}(\widehat{G}))$ as the composition of
\begin{equation*}
(\id\otimes p_{u})\circ\widehat{\D} : C_{0}(\widehat{G})\to C_{0}(\widehat{G})\otimes C_{0}(\widehat{G})_{u}
\end{equation*}
with the canonical equivariant Morita equivalence $C_{0}(\widehat{G})\otimes C_{0}(\widehat{G})_{u}\sim C_{0}(\widehat{G})$. More explicitly, this is the Kasparov module
\begin{equation*}
{}_{C_{0}(\widehat{G})}\left(C_{0}(\widehat{G})\otimes H_{u}\right)_{C_{0}(\widehat{G})}
\end{equation*}
where the right action is by multiplication on the first tensor and the left action is through
\begin{equation*}
(\id\otimes p_{u})\circ\widehat{\D} : C_{0}(\widehat{G})\to C_{0}(\widehat{G})\otimes B(H_{u}).
\end{equation*}
We want to understand the image of $T_{u}$ under the descent morphism
\begin{equation*}
j : KK^{\widehat{\HH}_{q}}(C_{0}(\widehat{G}), C_{0}(\widehat{G}))\to KK(\widehat{\HH}_{q}\ltimes C_{0}(\widehat{G}), \widehat{\HH}_{q}\ltimes C_{0}(\widehat{G})).
\end{equation*}

Let us first decompose $C_{0}(\widehat{G})$ as an $\widehat{\HH}_{q}$-algebra. For a word $w$ on $\Irr(\G)\setminus\{\varepsilon_{\G}\}\sqcup\Irr(SU_{q}(2))\setminus\{\varepsilon_{SU_{q}(2)}\}$ which is either empty or starts in $\Irr(\G)$, we denote by $J_{w}$ the set of irreducible representations which are contained in a tensor product of an irreducible representation of $\HH_{q}$ by $w$ and we write
\begin{equation*}
A_{w} = \bigoplus_{x\in J_{w}}B(H_{x}),
\end{equation*}
where $\bigoplus$ is a $c_{0}$-sum. According to the results of subsection \ref{subsec:wreathproduct}, $A_{w}$ is equivariantly Morita equivalent to $C_{0}(\widehat{\HH}_{q})$. In fact, it is even isomorphic to $C_{0}(\widehat{\HH}_{q})\otimes B(H_{w})$ with the trivial action on the second factor. Moreover,
\begin{equation*}
C_{0}(\widehat{G}) = A_{u}\oplus\bigoplus_{w} A_{w},
\end{equation*}
where $A_{u}$ is the sum of the blocks obtained by tensoring representations of $\HH_{q}$ by $u$. The KK-element $T_{u}$ therefore splits as a direct sum and we will study each summand separately. Note that the crossed-product of each $A_{w}$ by $\widehat{\HH}_{q}$ is isomorphic to the compact operators. The same holds for $A_{u}$ since the crossed product, being a non-trivial torsion action, must be equivariantly Morita equivalent to $M_{2}(\C)$, hence Morita equivalent to $\C$. Thus, after splitting and applying the descent morphism to each component, the result can be described as a matrix acting on a free $\Z$-module of infinite rank. Let us denote by $e_{w}$ the unit of the copy of $\Z$ corresponding to $A_{w}$ and by $e_{u}$ the one corresponding to $A_{u}$.

\begin{prop}\label{prop:descentsuq}
The element $\partial_{u} = j(T_{u})$ acts as follows on the basis :
\begin{itemize}
\item $\partial_{u}(e_{wu^{k}}) = e_{wu^{k+1}} + e_{wu^{k-1}}$ for $w\neq\emptyset$ ending in $\Irr(\G)$,
\item $\partial_{u}(e_{w}) = e_{wu}$ for $w\neq\emptyset$ ending in $\Irr(\G)$,
\item $\partial_{u}(e_{\emptyset}) = 2e_{u}$ and $\partial_{u}(e_{u}) = 2e_{\emptyset}$.
\end{itemize}
\end{prop}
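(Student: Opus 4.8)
We must compute the action of $\partial_u = j(T_u)$ on the basis $\{e_w, e_u\}$ of the free $\Z$-module obtained by applying the descent morphism to the summands of $T_u$.

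**Understanding the setup.** Let me recall what's going on. We have $G = \G * SU_q(2)$, and $\HH_q$ is the monoidally-equivalent model of $\G \wr_* S_N^+$ sitting inside $\G * SU_q(2)$. The C*-algebra $C_0(\widehat{G})$ decomposes as an $\widehat{\HH}_q$-algebra into blocks $A_w$ indexed by words $w$ (empty or starting in $\Irr(\G)$) plus a special block $A_u$. Each $A_w \cong C_0(\widehat{\HH}_q) \otimes B(H_w)$ with trivial action on the second factor. The KK-element $T_u$ is defined via $(\id \otimes p_u) \circ \widehat{\D}$, which is essentially "tensoring on the right by the fundamental representation $u$ of $SU_q(2)$" (and picking out the $u$-isotypic component).

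**What the descent morphism does.** The crossed product $\widehat{\HH}_q \ltimes A_w$ is isomorphic to the compact operators (since $A_w$ is Morita equivalent to $C_0(\widehat{\HH}_q)$ and the crossed product of that by $\widehat{\HH}_q$ is $\K$). So on each summand $T_u$ becomes an element of $KK(\K, \K) = KK(\C, \C) = \Z$ after descent, and the whole thing is a $\Z$-matrix. The key is that $T_u$ acts by "right tensoring by $u$" combined with "restricting back down to $\HH_q$-blocks." So I need to understand: if I start with the block(s) in $A_w$ and tensor on the right with the fundamental rep $u$ of $SU_q(2)$, which blocks $A_{w'}$ do I land in, and with what multiplicity?

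**The combinatorial heart: free product fusion rules.** The representations in $J_w$ are those contained in $(\Irr \HH_q) \otimes w$. Tensoring on the right by $u$ means looking at $w \otimes u$ in the free product fusion ring. Here I must use the free product fusion rules recalled at the start of Section 3:
- If $w$ ends in $\Irr(\G)$ and I tensor by $u \in \Irr(SU_q(2))$, the two don't belong to the same factor, so $w \otimes u = wu$ — just concatenation. This should give $\partial_u(e_w) = e_{wu}$, matching the second bullet.
- If $w = w'u^k$ already ends in $\Irr(SU_q(2))$ (so $w' \neq \emptyset$ ends in $\Irr(\G)$), then $u^k \otimes u$ in $SU_q(2)$ decomposes as $u^{k+1} \oplus u^{k-1}$ (the $SU_q(2)$ fusion rule, same as classical $SU(2)$). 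Reassembling via the free product rule, this gives $wu^{k+1} + wu^{k-1}$ — the first bullet, with the understanding that $u^0 = \varepsilon$ terminates the word back into $\Irr(\G)$.

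**The delicate case: the factor of $2$.** The third bullet, $\partial_u(e_\emptyset) = 2e_u$ and $\partial_u(e_u) = 2e_\emptyset$, is where the main obstacle lies. For the empty word $w = \emptyset$, the block $A_\emptyset$ consists of representations of $\HH_q$ itself sitting inside $G$, and tensoring by $u$ lands in $A_u$. The factor of $2$ cannot come from the abstract fusion rules alone — it is a genuine KK-theoretic multiplicity. I expect it arises exactly as in the analogous computation of Voigt for $SU_q(2)$ / quantum automorphism groups (cited as \cite{voigt2011baum} and \cite{voigt2012quantum}): the element $T_u$ descends not merely to a combinatorial incidence but to a Fredholm-index-type computation, and the relevant inner product / dimension count for the defining representation of $SO_q(3)$ acting on $M_2(\C)$ produces a $2$. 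Concretely, $u^1$ of $SU_q(2)$ is 2-dimensional and the block structure at the "boundary" between $\Irr(\HH_q)$ and $u$ doubles up because $u^1 \otimes u^1 \supset \varepsilon \oplus u^2$ and the torsion block $A_u$ is attached via the 2-dimensional representation.

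Here is my plan in order:

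\emph{Step 1.} Reduce to a per-summand computation. Since $T_u$ respects the direct sum decomposition $C_0(\widehat{G}) = A_u \oplus \bigoplus_w A_w$, and since the descent morphism is additive and compatible with direct sums, it suffices to compute, for each basis vector $e_w$ (and $e_u$), the image $\partial_u(e_w)$ block by block. I would set up the identification $\widehat{\HH}_q \ltimes A_w \cong \K$ explicitly, fixing the Morita equivalence $A_w \sim C_0(\widehat{\HH}_q)$ coming from Subsection \ref{subsec:wreathproduct}, so that $e_w$ is literally the generator of $K_0(\K) = \Z$.

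\emph{Step 2.} Identify $\partial_u$ as right-tensoring by $u$ at the level of index sets. I would trace through the definition of $T_u$ as the Kasparov module ${}_{C_0(\widehat G)}(C_0(\widehat G) \otimes H_u)_{C_0(\widehat G)}$ with left action through $(\id \otimes p_u) \circ \widehat\D$. The upshot is that the left action mixes the block $A_w$ into those blocks $A_{w'}$ for which some $x' \in J_{w'}$ appears in $x \otimes u$ with $x \in J_w$. So the matrix coefficient $\langle \partial_u(e_w), e_{w'}\rangle$ is governed by the fusion multiplicity of passing from $w$ to $w'$ upon tensoring by $u$, computed in the free product $\Irr(G)$.

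\emph{Step 3.} Apply the free product fusion rules (stated at the opening of Section \ref{sec:torsion}) to evaluate $w \otimes u$ in each case. For $w$ ending in $\Irr(\G)$: concatenation gives the single term $wu$, so $\partial_u(e_w) = e_{wu}$. For $w = w'u^k$ ending in $\Irr(SU_q(2))$: use $u^k \otimes u = u^{k+1} \oplus u^{k-1}$ and reassemble, getting $\partial_u(e_{w'u^k}) = e_{w'u^{k+1}} + e_{w'u^{k-1}}$. Care must be taken at $k=1$ so that $u^{0}$ is interpreted as the trivial $SU_q(2)$-representation, which fuses back into the word ending in $\Irr(\G)$, recovering $e_{w'}$.

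\emph{Step 4 (the hard step).} Handle the exceptional boundary $w = \emptyset$ and $w = u$. This is where I expect the real work, and where the coefficient $2$ must be justified. Since $\emptyset$ corresponds to $\Irr(\HH_q)$ itself, tensoring by the fundamental representation $u = u^1$ of $SU_q(2)$ produces the block $A_u$, but the index-theoretic multiplicity is governed by the dimension of the relevant multiplicity space, which for the defining $2$-dimensional representation of $SO_q(3)$ on $M_2(\C)$ (equivalently $u^1$ being $2$-dimensional, attaching $\varepsilon$ and $u^2 \in \Irr(\HH_q)$) yields a factor of $2$. I would compute this precisely by following the model computation in \cite[Sec 5]{voigt2012quantum} and \cite{voigt2011baum}, where the analogous off-diagonal entries connecting the "trivial" block to the "torsion" block of $SU_q(2)$ carry multiplicity $2$. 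The symmetry $\partial_u(e_u) = 2 e_\emptyset$ follows by the same computation with the roles reversed, using that $\overline{u^1} = u^1$ so the adjoint/transpose relation forces the matching coefficient.

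The main obstacle, to reiterate, is Step 4: the two generic cases are pure fusion-rule bookkeeping, but the factor $2$ linking $e_\emptyset$ and $e_u$ is a genuinely KK-theoretic phenomenon that I do not believe can be read off the based-ring combinatorics, and its clean justification will rely on importing the explicit descent computation from Voigt's work on $SU_q(2)$ and the quantum automorphism groups.
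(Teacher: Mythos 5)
Your proposal follows essentially the same route as the paper: decompose $C_{0}(\widehat{G})$ into the blocks $A_{w}$ and $A_{u}$, identify each crossed product with the compacts so that $\partial_{u}$ becomes a $\Z$-matrix, read off the nonzero entries from the free product and $SU_{q}(2)$ fusion rules, verify the generic coefficients are $1$ by unwinding the Kasparov module through Morita equivalences, and import the coefficient $2$ on $\Z e_{\emptyset}\oplus\Z e_{u}$ from Voigt's $SO_{q}(3)$ computation. The only small point the paper makes explicit that you leave implicit is the justification for that last citation, namely that the action of $j(T_{u})$ on $\Z e_{\emptyset}\oplus\Z e_{u}$ does not depend on $\G$, so one may simply take $\G$ trivial and quote \cite[Sec 5]{voigt2012quantum}.
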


\begin{proof}
If $a\in B(H_{x})$, then $(\id\otimes p_{u})\circ\widehat{\D}(a)$ belongs the sum of the tensor products $B(H_{y})\otimes B(H_{u})$ with $y$ such that $x\subset y\otimes u$. We therefore have three cases :
\begin{itemize}
\item If $x\in J_{wu^{k}}$, then $y \in J_{wu^{k+1}}$ or $y\in J_{wu^{k-1}}$.
\item If $x\in J_{w}$ with $w\neq\emptyset$ ending in $\Irr(\G)$, then $y\in J_{wu}$.
\item If $x\in J_{u}$ then $y\in J_{\emptyset}$ and if $x\in J_{\emptyset}$ then $y\in J_{u}$.
\end{itemize}
Let us concentrate on the first case. We have to understand the module
\begin{equation*}
{}_{\widehat{\HH}_{q}\ltimes C_{0}(\widehat{\HH}_{q})\otimes B(H_{wu^{k}})}\left(\widehat{\HH}_{q}\ltimes C_{0}(\widehat{\HH}_{q})\otimes B(H_{wu^{k+1}})\otimes H_{u}\right)_{\widehat{\HH}_{q}\ltimes C_{0}(\widehat{\HH}_{q})\otimes B(H_{wu^{k+1}})}
\end{equation*}
where the right action is multiplication on the first two tensors and the left action is multiplication for the crossed-product and $(p_{wu^{k+1}}\otimes p_{u})\circ\widehat{\D}$ for $B(H_{wu^{k}})$. The crossed-product appearing here is by Takesaki-Takai duality isomorphic to the compact operators so that we can remove it. Applying then Morita equivalence on the right action, we get
\begin{equation*}
{}_{B(H_{wu^{k}})}(B(H_{wu^{k+1}})\underset{B(H_{wu^{k+1}})}{\bigotimes}H_{wu^{k+1}}\otimes H_{u})_{\C} = {}_{B(H_{wu^{k}})}(H_{wu^{k+1}}\otimes H_{u})_{\C},
\end{equation*}
with the left action given simply by the embedding of $B(H_{wu^{k}})$ as a corner in $B(H_{wu^{k+1}}\otimes H_{u})$. Applying now Morita equivalence on the left action yields
\begin{equation*}
{}_{\C}(H_{wu^{k}}\underset{B(H_{wu^{k}})}{\bigotimes}(H_{wu^{k+1}}\otimes H_{u}))_{\C}.
\end{equation*}
Recall that $H_{wu^{k+1}}\otimes H_{u} = H_{wu^{k}}\oplus H_{wu^{k+2}}$ and that the tensor product with the second component vanishes since $B(H_{wu^{k}})$ acts on it by $0$. We are therefore left with
\begin{equation*}
H_{wu^{k}}\underset{B(H_{wu^{k}})}{\bigotimes}H_{wu^{k}}
\end{equation*}
which is $1\in \Z = KK(\C, \C)$. The same computation works for the term $wu^{k-1}$.

If $w\neq\emptyset$ ends in $\Irr(\G)$, the previous computation still works and is even simpler since there is only one possible component for the right-hand side. As for the third case, it is clear that $e_{\emptyset}$ and $e_{u}$ are exchanged. Moreover, the action of $j(T_{u})$ on $\Z e_{\emptyset}\oplus\Z e_{u}$ does not depend on the compact quantum group $\G$ so that it is enough to do the computation when $\G$ is trivial. In that case, $\HH_{q} = SO_{q}(3)$ and the result was proven in \cite[Sec 5]{voigt2012quantum}.
\end{proof}

We will need later on to know the image of the linear map $d_{u} = \partial_{u} - 2\id$, which is easily computed. For convenience, let us set $E_{w} = \Span\{e_{wu^{k}} \mid k\in \N\}$ for $w\neq\emptyset$ and $E_{u} = \Z e_{\emptyset}\oplus \Z e_{u}$.

\begin{lem}\label{lem:imagesu2}
The image of $E_{u}$ under $d_{u}$ is $2\Z(e_{\emptyset} - e_{u})$. Moreover, let $(a_{k})_{k\in \N}$ be the sequence defined by $a_{0} = 2$, $a_{1} = 3$ and $a_{k+1} = 2a_{k} - a_{k-1} $. Then, the image of $E_{w}$ is the free module spanned by the vectors $\epsilon_{k} = e_{wu^{k+1}} - a_{k}e_{w}$.
\end{lem}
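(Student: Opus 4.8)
The plan is to exploit that, by Proposition \ref{prop:descentsuq}, the operator $d_u = \partial_u - 2\id$ preserves each of the submodules $E_w$ and $E_u$, so that the image can be computed summand by summand.

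For $E_u$ this is a two-dimensional calculation. From $\partial_u(e_\emptyset) = 2e_u$ and $\partial_u(e_u) = 2e_\emptyset$ I read off
\[
d_u(e_\emptyset) = 2e_u - 2e_\emptyset = -2(e_\emptyset - e_u), \qquad d_u(e_u) = 2e_\emptyset - 2e_u = 2(e_\emptyset - e_u),
\]
so both generators are integer multiples of $e_\emptyset - e_u$, and the image of $E_u$ is exactly $2\Z(e_\emptyset - e_u)$.

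For $E_w$ with $w\neq\emptyset$ ending in $\Irr(\G)$, I would first record the action of $d_u$ on the basis, writing $v_k := d_u(e_{wu^k})$: Proposition \ref{prop:descentsuq} gives $v_0 = e_{wu} - 2e_w$ and $v_k = e_{wu^{k+1}} - 2e_{wu^k} + e_{wu^{k-1}}$ for $k\geq 1$, so the image of $E_w$ is the $\Z$-span of $\{v_k\}_{k\geq 0}$. The key structural observation is that $v_k$ has highest-index term $e_{wu^{k+1}}$ with coefficient $1$; hence the family $\{v_k\}$ is already in echelon form with respect to the exponent of $u$. Its span is therefore free of countable rank, and I can Gaussian-eliminate the intermediate terms $e_{wu^j}$ with $1\leq j\leq k$ so as to put each generator into a reduced form supported only on $e_{wu^{k+1}}$ and $e_w$.

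I would carry out this reduction by induction, setting $\epsilon_0 := v_0$ and $\epsilon_k := v_k + 2\epsilon_{k-1} - \epsilon_{k-2}$ (with the convention $\epsilon_{-1}=0$). A direct substitution shows that the middle terms telescope away and that $\epsilon_k = e_{wu^{k+1}} - a_k e_w$, the coefficient of $e_w$ obeying the recurrence forced by this cancellation, namely $a_{k+1} = 2a_k - a_{k-1}$ with $a_0 = 2$, $a_1 = 3$; this is precisely the recurrence defining the sequence $(a_k)$. Since each $\epsilon_k$ is an integer combination of the $v_j$ it lies in the image, and conversely the identity $v_k = \epsilon_k - 2\epsilon_{k-1} + \epsilon_{k-2}$ shows each generator lies in $\Span\{\epsilon_j\}$, so the two spans coincide. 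Linear independence of the $\epsilon_k$ is immediate from their distinct leading terms $e_{wu^{k+1}}$, so they form a free basis. I expect the only real obstacle to be the inductive step itself: verifying that the single combination $v_k + 2\epsilon_{k-1} - \epsilon_{k-2}$ cancels both $e_{wu^k}$ and $e_{wu^{k-1}}$ at once. This is exactly the point where the second-order recurrence for $(a_k)$ is used, and it reduces to a short bookkeeping computation once the echelon structure is in place.
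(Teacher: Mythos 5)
Your proposal is correct and is essentially the paper's own proof rearranged: the paper also treats $E_u$ by direct computation and handles $E_w$ by induction, showing $d_u(e_{wu^{k+1}}) = \epsilon_{k+1} + \epsilon_{k-1} - 2\epsilon_k$, which is exactly your elimination $\epsilon_k = v_k + 2\epsilon_{k-1} - \epsilon_{k-2}$ read in the other direction. One caveat: the recurrence you (correctly) derive, $a_{k+1} = 2a_k - a_{k-1}$ (so $a_k = k+2$), is also what the paper's proof produces, but it does not literally match the recurrence $a_{k+1} = a_{k-1} - 2a_k$ printed in the lemma statement, which is a sign typo — so your claim that your recurrence ``is precisely the recurrence defining $(a_k)$'' is true only after that correction.
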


\begin{proof}
The first point is clear. For the second point, we will proceed by induction, showing that the image of the span of $(d_{u}(e_{wu^{l}}))_{l}$ for $0\leqslant l\leqslant k$ equals the span of $(\epsilon_{l})_{l}$. For $k=0$, we indeed have $d_{u}(e_{w}) = \epsilon_{0}$ and for $k = 1$ a straightforward computation yields 
\begin{equation*}
d_{u}(e_{wu}) = e_{wu^{2}} - 3e_{w} - 2\epsilon_{0} = \epsilon_{1} - 2\epsilon_{0}.
\end{equation*}
If we now assume $H_{k}$ for $k\geqslant 1$, we get
\begin{align*}
d_{u}(e_{wu^{k+1}}) & = e_{wu^{k+2}} + e_{wu^{k}} - 2e_{wu^{k+1}} \\
& = e_{wu^{k+2}} + (\epsilon_{k-1} + a_{k-1}e_{w}) - 2(\epsilon_{k} + a_{k}e_{w}) \\
& = e_{wu^{k+2}} - (2a_{k} - a_{k-1})e_{w} + \epsilon_{k-1} - 2\epsilon_{k} \\
& = \epsilon_{k+1} + \epsilon_{k-1} - 2\epsilon_{k}
\end{align*}
Since the family $(\epsilon_{k})_{k}$ is free, the result follows.
\end{proof}

\subsection{Free orthogonal quantum groups}

The first computation will be for $\G = O_{n}^{+}$. Let us denote by $v$ its fundamental representation and let $T_{v}\in KK^{\widehat{\HH}_{q}}(C_{0}(\widehat{G}), C_{0}(\widehat{G}))$ be defined in the same way as $T_{u}$. We will denote its image under the descent morphism by $\partial_{v}$ and set $d_{v} = \partial_{v} - n\id$. Consider the homological ideal $\mathfrak{J} = \ker(\Res : KK^{\widehat{G}}\to KK)$. It was proven in \cite{vergnioux2013k} that the complex
\begin{equation}
0 \longrightarrow C_{0}(\widehat{G})^{\oplus 2} \overset{\delta}{\longrightarrow} C_{0}(\widehat{G}) \overset{\lambda}{\longrightarrow} \C \longrightarrow 0,
\end{equation}
where $\delta = (T_{u} - \dim(u)\id)\oplus (T_{v} - \dim(v)\id)$ and $\lambda$ is the regular representation, is a $\mathfrak{J}$-projective resolution in $KK^{\widehat{G}}$. It therefore yields an exact triangle in $KK^{\widehat{\HH}_{q}}$ containing $\widetilde{\C}$. Applying the functor $K(\widehat{\HH}_{q}\ltimes\cdot)$ leads to the six-term exact sequence
\begin{equation*}
\xymatrix{ \left(\bigoplus_{w}\Z_{w}\right)\oplus\Z_{u} \ar[r] & K_{0}(C(\HH_{q})) \ar[r] & 0 \ar[d] & \\
 \left(\left(\bigoplus_{w}\Z_{w}\right)\oplus\Z_{u}\right)^{\oplus 2} \ar[u]^{d_{u}\oplus d_{v}}& K_{1}(C(\HH_{q})) \ar[l]& 0 \ar[l] }
\end{equation*}
and the K-theory groups are given by the kernel and coimage of $d_{u}\oplus d_{v}$. To compute these spaces, we need an analogue of Proposition \ref{prop:descentsuq} for $\partial_{v}$. The map $d_{u}\oplus d_{v}$ acts on two copies of $(\oplus_{w}\Z_{w})\oplus\Z_{u}$ and for convenience we will denote by $(f_{w})_{w}$ the basis of the second copy.

\begin{prop}\label{prop:descentorthogonal}
The element $\partial_{v} = j(T_{v})$ acts as follows on the basis :
\begin{itemize}
\item $\partial_{v}(f_{wv^{k}}) = f_{wv^{k+1}} + f_{wv^{k-1}}$,
\item $\partial_{v}(f_{w}) = f_{wv}$ for $w$ ending in $\Irr(SU_{q}(2))$ or $w = \emptyset$.
\item $\partial_{v}(f_{u}) = nf_{u}$.
\end{itemize}
\end{prop}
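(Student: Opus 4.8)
The plan is to reproduce the proof of Proposition \ref{prop:descentsuq} almost verbatim, with $u$ replaced by $v$ and with the fusion rules of $O_n^+$, namely $v^k\otimes v=v^{k+1}\oplus v^{k-1}$. The starting observation is the same: for $a\in B(H_x)$ the element $(\id\otimes p_v)\circ\widehat{\D}(a)$ lies in the sum of the blocks $B(H_y)\otimes B(H_v)$ over those $y$ with $x\subset y\otimes v$, equivalently (since $v=\overline{v}$) over the $y\subset x\otimes v$. Thus the whole point is to determine, for each coset representative, which cosets are reached by tensoring on the right by $v$ and with which multiplicity; once this is known, the Kasparov module attached to each pair of cosets is computed exactly as in Proposition \ref{prop:descentsuq}, i.e.\ one removes the crossed product by Takesaki--Takai duality and performs the two Morita reductions.

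For the first two cases I would argue as in the first and second cases there. When $w$ ends in $\Irr(SU_q(2))$ (or $w=\emptyset$), the last letter of $wv^k$ is the $\G$-letter $v^k$, which lies in a free factor distinct from the remaining letters, so tensoring by $v$ only affects this letter and decomposes it according to $v^k\otimes v=v^{k+1}\oplus v^{k-1}$. Reassembling into words gives $wv^{k+1}$ and $wv^{k-1}$, the latter surviving only for $k\geq 1$; this explains the single term $f_{wv}$ when $k=0$ and the two terms $f_{wv^{k+1}}+f_{wv^{k-1}}$ when $k\geq 1$. In each case the module collapses to $H_x\otimes_{B(H_x)}H_x=\C$, so every coefficient equals $1$.

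The genuinely new point, which I expect to be the main obstacle, is the self-map on the torsion block $A_u$. First I would check that $A_u$ is preserved by $\otimes v$: for $x\in J_u$ the relation $u^1vu^1\in\Irr(\HH_q)$ (concretely $u^1v\subset(u^1vu^1)\otimes u^1$) shows that tensoring by $v$ stays inside the $\widehat{\HH}_q$-orbit of $u^1$, so every $y\subset x\otimes v$ again lies in $J_u$ and no other block is involved. Unlike the generic blocks this is not a shift: elements of $J_u$ ending in $\Irr(\G)$ produce two targets through $v\otimes v=\varepsilon\oplus v^2$, so the induced endomorphism of $K_0(\widehat{\HH}_q\ltimes A_u)=\Z$ genuinely reflects the internal fusion structure of the block rather than a single matrix entry.

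Finally I would identify this endomorphism with multiplication by $\dim(v)=n$. The cleanest route is to use the equivariant Morita equivalence $A_u\sim\HH_q\ltimes M_2(\C)$ of Lemma \ref{lem:canonicaltorsion} and to run the same Kasparov computation as above, now with source and target block equal; because the full fibre $H_v$ is reabsorbed into the single block $A_u$ instead of feeding an adjacent one, it contributes its whole classical dimension. The delicate bookkeeping is to see that the $u^1$-factors carrying the Morita equivalence cancel, leaving exactly $n$ — this is the analogue of the factor $2=\dim(u^1)$ produced by the torsion block in the third case of Proposition \ref{prop:descentsuq}. The essential difference is that here we cannot reduce to the case of a trivial $\G$, since $v$ is a representation of $\G$ itself, so this last computation has to be carried out directly.
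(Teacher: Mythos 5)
Your proposal follows the paper's proof essentially verbatim: the first two cases are handled by repeating the Kasparov-module computation of Proposition \ref{prop:descentsuq} with the $SU(2)$-type fusion rules of $O_n^+$, and the third case is treated exactly as in the paper, by observing that $uv\subset (uvu)\otimes u$ keeps the block $A_u$ stable, invoking Lemma \ref{lem:canonicaltorsion} to replace $\widehat{\HH}_q\ltimes A_u$ by $M_2(\C)$, and reading off the factor $n=\dim H_v$ from the faithfulness of the left action after the two Morita reductions. You also correctly identify the one point where the argument must diverge from Proposition \ref{prop:descentsuq}, namely that one cannot reduce to trivial $\G$ and must compute the $A_u$-endomorphism directly.
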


\begin{proof}
The first two cases follow from the same computations as in Proposition \ref{prop:descentsuq}. As for the third one, first note that the only representation $y$ such that $u\subset y\otimes v$ is $uv$ and that $uv\in J_{u}$ since $uv\subset uvu\otimes u$. Thus, $\partial_{v}(f_{u})\in \Z f_{u}$. Before applying the descent map $j$, we are considering the Kasparov module
\begin{equation*}
{}_{A_{u}}(A_{u}\otimes H_{v})_{A_{u}}
\end{equation*}
with the left action given by $(\id\otimes p_{v})\circ\widehat{\D}$ and the right one given by multiplication on the first tensor. Taking crossed-products we get
\begin{equation*}
{}_{\widehat{\HH}_{q}\ltimes A_{u}}(\widehat{\HH}_{q}\ltimes A_{u}\otimes H_{v})_{\widehat{\HH}_{q}\ltimes A_{u}}
\end{equation*}
By Lemma \ref{lem:canonicaltorsion}, this yields the same KK-theory element as
\begin{equation*}
{}_{M_{2}(\C)}(M_{2}(\C)\otimes H_{v})_{M_{2}(\C)}
\end{equation*}
where the right action is multiplication on the first tensor and all we have to know on the left action is that it is faithful. Applying Morita equivalence on the right, we get
\begin{equation*}
{}_{M_{2}(\C)}(\C^{2}\otimes H_{v})_{\C} = {}_{\C}([\C^{2}\otimes H_{v}] \underset{M_{2}(\C)}\otimes\C^{2})_{\C}.
\end{equation*}
By faithfulness of the representation, the latter Hilbert space has dimension $n$, hence the result.
\end{proof}

The problem now boils down to linear algebra in $\Z$-modules. For clarity we first determine the kernel.

\begin{lem}\label{lem:kernelsumorthogonal}
The kernel of $d_{u}\oplus d_{v}$ is $\Z(e_{\emptyset} + e_{u})\oplus\Z f_{u}$.
\end{lem}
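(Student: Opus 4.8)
The plan is to compute the kernel of $d_{u}\oplus d_{v}$ directly by understanding how $d_{u}$ and $d_{v}$ act on the two copies of the basis. An element of the domain is a finite $\Z$-linear combination, so I write it as $x\oplus y$ where $x$ lies in the first copy (coefficients $e_{\bullet}$) and $y$ in the second copy (coefficients $f_{\bullet}$). The crucial observation is that the two maps $d_{u}$ and $d_{v}$ live on the \emph{same} underlying module but move basis vectors along different ``directions'': $d_{u}$ acts by appending or removing $u$'s (governed by Proposition \ref{prop:descentsuq}), while $d_{v}$ acts by appending or removing $v$'s (governed by Proposition \ref{prop:descentorthogonal}). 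Being in the kernel of $d_{u}\oplus d_{v}$ means being simultaneously in $\ker d_{u}$ and $\ker d_{v}$, so I would first compute each kernel separately and then intersect.

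\textbf{Computing each kernel.}
First I would determine $\ker d_{u}$. Using the image description from Lemma \ref{lem:imagesu2}, or directly from the recursion $a_{k+1}=a_{k-1}-2a_{k}$ with $a_{0}=2$, $a_{1}=3$, the map $d_{u}$ restricted to each $E_{w}$ (for $w\neq\emptyset$ ending in $\Irr(\G)$) is injective, since the images $\epsilon_{k}=e_{wu^{k+1}}-a_{k}e_{w}$ are a free family. Hence $d_{u}$ has no kernel on any $E_{w}$, and on $E_{u}=\Z e_{\emptyset}\oplus\Z e_{u}$ the map sends $e_{\emptyset}\mapsto 2e_{u}-2e_{\emptyset}$ and $e_{u}\mapsto 2e_{\emptyset}-2e_{u}$, whose kernel is exactly $\Z(e_{\emptyset}+e_{u})$. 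By the symmetric computation for $d_{v}$, using Proposition \ref{prop:descentorthogonal}, the kernel of $d_{v}$ on the $f$-copy comes only from the special block $f_{u}$, on which $\partial_{v}(f_{u})=nf_{u}$ gives $d_{v}(f_{u})=0$, so $f_{u}\in\ker d_{v}$; on the other blocks $d_{v}$ is again injective by the same free-family argument.

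\textbf{Intersecting and finishing.}
The subtlety is that $\ker(d_{u}\oplus d_{v})$ is \emph{not} a direct sum of the two separately computed kernels, because $d_{u}$ and $d_{v}$ act on disjoint copies: $d_{u}$ only sees the $e$-coordinates and $d_{v}$ only sees the $f$-coordinates. Thus $x\oplus y\in\ker(d_{u}\oplus d_{v})$ iff $x\in\ker d_{u}$ and $y\in\ker d_{v}$, so the kernel is literally the product $(\ker d_{u})\oplus(\ker d_{v})=\Z(e_{\emptyset}+e_{u})\oplus\Z f_{u}$, matching the claim. The step I expect to be the main obstacle is verifying that $d_{u}$ (resp.\ $d_{v}$) is genuinely injective on each infinite block $E_{w}$: this requires confirming that the triangular structure of the vectors $\epsilon_{k}$ forces every coefficient to vanish, and in particular that no infinite tail of coefficients can conspire to cancel (which is excluded because elements are finite combinations, so only finitely many $\epsilon_{k}$ appear and their leading terms $e_{wu^{k+1}}$ are distinct). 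Once injectivity on every block except the two distinguished ones is established, the identification of the kernel follows immediately.
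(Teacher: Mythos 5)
There is a genuine gap, and it comes from a misreading of what the map $d_{u}\oplus d_{v}$ is. In the projective resolution, $\delta$ goes from $C_{0}(\widehat{G})^{\oplus 2}$ to a \emph{single} copy of $C_{0}(\widehat{G})$, and accordingly the six-term sequence exhibits $d_{u}\oplus d_{v}$ as a map from $\left(\left(\bigoplus_{w}\Z_{w}\right)\oplus\Z_{u}\right)^{\oplus 2}$ to one copy $\left(\bigoplus_{w}\Z_{w}\right)\oplus\Z_{u}$; concretely $(x,y)\mapsto d_{u}(x)+d_{v}(y)$, with the outputs landing in the same module. It is \emph{not} the block-diagonal map $(x,y)\mapsto(d_{u}(x),d_{v}(y))$ that you describe when you say that "$d_{u}$ only sees the $e$-coordinates and $d_{v}$ only sees the $f$-coordinates". (This is also forced by the $K_{0}$ computation that follows the lemma, where relations coming from both $d_{u}$ and $d_{v}$ are imposed on the same classes $p(e_{w})$.) Consequently the kernel condition is $d_{u}(x)=-d_{v}(y)$, and your identification $\ker(d_{u}\oplus d_{v})=\ker d_{u}\oplus\ker d_{v}$ is exactly equivalent to the assertion that the images of $d_{u}$ and $d_{v}$ intersect trivially in the common codomain. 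That assertion is the real content of the lemma and you neither state nor prove it; it is not obvious, since for instance the basis vector $e_{v}$ occurs both in $d_{u}$-images (inside $\epsilon_{k}=e_{vu^{k+1}}-a_{k}e_{v}$) and in $d_{v}$-images (inside $e_{v}-ne_{\emptyset}$).

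The paper closes this gap with a leading-term argument: for a putative kernel element, one looks at a word of maximal length occurring with nonzero coefficient; applying $d_{u}$ to an $e_{w'u^{k_{0}}}$-term produces the strictly longer word $w'u^{k_{0}+1}$ ending in $u$, while every word produced by $d_{v}$ either ends in $v$ or is shorter, so no cancellation between the two images can occur at the top length, and similarly with the roles reversed. This forces all coefficients to vanish except those of $e_{\emptyset}$, $e_{u}$, $f_{\emptyset}$, $f_{u}$, after which a direct two-by-two computation gives the answer. Your computations of the individual kernels ($\Z(e_{\emptyset}+e_{u})$ for $d_{u}$, using that the $\epsilon_{k}$ form a free family, and $\Z f_{u}$ for $d_{v}$) are correct and are indeed part of what is needed, but without the trivial-intersection step the argument does not establish the statement.
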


\begin{proof}
Consider an element
\begin{equation*}
x = \lambda_{u}e_{u} + \mu_{u}f_{u} + \sum_{w}\lambda_{w}e_{w} + \mu_{w}f_{w}
\end{equation*}
and let $L$ be the maximal length of the words appearing with non-zero coefficient in this sum. If we assume that $L > 1$, then a word $w$ of length $L$ is either of the form $w'u^{k}$ or $w'v^{k}$. In the first case, let $k_{0}$ be maximal so that $w = w'u^{k_{0}}$ occurs in $x$. Then,
\begin{equation*}
\lambda_{w}e_{w'u^{k_{0}+1}} - (d_{u}\oplus d_{v})(x)
\end{equation*}
must be a linear combination of basis vectors not including $e_{wu^{k_{0}+1}}$, which is impossible if $(d_{u}\oplus d_{v})(x) = 0$. Similarly, if $\mu_{w}\neq 0$ then $\mu_{w}f_{wv} - (d_{u}\oplus d_{v})(x)$ yields a contradiction. The same argument works if $w = w'v^{k}$. If we assume now that $L = 1$ and $w = v^{k}$ we get the same contradiction. In conclusion, the sum only contains terms associated to $w = u$ or $w = \emptyset$ and
\begin{equation*}
x = \lambda_{u} e_{u} + \mu_{u}f_{u} + \lambda_{\emptyset}e_{\emptyset} + \mu_{\emptyset}f_{\emptyset}.
\end{equation*}
Then,
\begin{align*}
(d_{u}\oplus d_{v})(x) & = \lambda_{u}(2e_{\emptyset} - 2e_{u}) + 0 + \lambda_{\emptyset}(2e_{u} - 2e_{\emptyset}) + \mu_{\emptyset}(f_{v} - nf_{\emptyset}) \\
& = (2\lambda_{u} - 2\lambda_{\emptyset})e_{\emptyset} + (2\lambda_{\emptyset} - 2\lambda_{u})e_{u} + \mu_{\emptyset}(f_{v} - nf_{\emptyset}),
\end{align*}
forcing $\lambda_{u} = \lambda_{\emptyset}$ and $\mu_{\emptyset} = 0$.
\end{proof}

We can now compute the K-theory of $\HH_{q}$ for $\G = O_{n}^{+}$.

\begin{prop}
Let $\G = O_{n}^{+}$ with $n\geqslant 2$. Then, $K_{0}(C(\HH_{q})) = \Z\oplus\Z_{2}$ and $K_{1}(C(\HH_{q})) = \Z^{2}$.
\end{prop}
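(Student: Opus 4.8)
I need to compute $K_0$ and $K_1$ of $C(\HH_q)$ where $\G = O_n^+$. From the six-term exact sequence set up just before the statement, these groups are the cokernel (coimage) and kernel of the map $d_u \oplus d_v$ acting on two copies of $\left(\bigoplus_w \Z_w\right)\oplus \Z_u$. So the entire problem is reduced to linear algebra over $\Z$, using the explicit action of $\partial_u$ (Proposition \ref{prop:descentsuq}) and $\partial_v$ (Proposition \ref{prop:descentorthogonal}), together with the kernel already computed in Lemma \ref{lem:kernelsumorthogonal}.

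Let me sketch the plan.

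\medskip

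\emph{Plan of proof.} The kernel of $d_u \oplus d_v$ is given by Lemma \ref{lem:kernelsumorthogonal} to be $\Z(e_\emptyset + e_u) \oplus \Z f_u$, which is free of rank $2$. Since this kernel injects into $K_1$ via the six-term sequence and the outgoing map from $K_1$ lands in $0$, I expect $K_1(C(\HH_q)) = \ker(d_u \oplus d_v) = \Z^2$; I first confirm that the relevant portion of the exact sequence forces $K_1$ to equal this kernel exactly (the $\Z$'s on the right of the hexagon are the incoming and outgoing terms, both zero, so $K_1$ is sandwiched as the kernel). For $K_0$, the six-term sequence identifies it with the coimage (equivalently the cokernel) of $d_u \oplus d_v$, i.e. with $\left[\left(\bigoplus_w \Z_w\right)\oplus \Z_u\right] \big/ \operatorname{im}(d_u \oplus d_v)$. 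The heart of the computation is therefore to determine this cokernel explicitly.

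\medskip

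\emph{Computing the cokernel.} I would analyze the image of $d_u \oplus d_v$ componentwise along the decomposition into the subspaces $E_w = \Span\{e_{wu^k}\}$, the analogous $v$-subspaces, and $E_u = \Z e_\emptyset \oplus \Z e_u$. The key input is Lemma \ref{lem:imagesu2}: for each nonempty word $w$ ending in $\Irr(\G)$, the image of $d_u$ restricted to $E_w$ is spanned by the vectors $\epsilon_k = e_{wu^{k+1}} - a_k e_w$. The crucial structural observation is that these relations let one express every $e_{wu^{k+1}}$ in terms of $e_w$ in the quotient, so each such "branch" collapses onto a single generator $e_w$ (and similarly for the $v$-branches coming from $d_v$). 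One must then track how these surviving generators $e_w$ are in turn identified or killed: the maps $\partial_u(e_w) = e_{wu}$ and $\partial_v(f_w) = f_{wv}$ (for $w$ ending appropriately) propagate the collapse up the word-length hierarchy, so that inductively every word of length $\geq 1$ is eliminated from the quotient. What remains is the behavior on $E_u$ and the two special generators: from Proposition \ref{prop:descentsuq}, $d_u$ maps $E_u$ with image $2\Z(e_\emptyset - e_u)$ (first point of Lemma \ref{lem:imagesu2}), contributing a $\Z_2$ torsion factor, while one free generator (the surviving combination $e_\emptyset + e_u$, dual to the kernel element) persists. I expect the net result to be a single free summand $\Z$ together with one torsion summand $\Z_2$, giving $K_0 = \Z \oplus \Z_2$.

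\medskip

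\emph{Main obstacle.} The routine part is applying Lemma \ref{lem:imagesu2} to collapse the $u$- and $v$-chains. The genuinely delicate step is the bookkeeping at the "base" of the word tree --- understanding exactly how the relations from the two different differentials $d_u$ and $d_v$ interact on the low-length generators ($e_\emptyset$, $e_u$, $f_\emptyset$, $f_u$, and the words of length one), since this is where the free rank and the $\Z_2$ torsion are actually determined. In particular I must verify that no extra torsion or extra free generators are created by the overlap of the two chain systems, and that the factor $2\Z(e_\emptyset - e_u)$ from the $SU_q(2)$ part really does survive as a $\Z_2$ rather than being trivialized by the orthogonal differential $d_v$. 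Once that finite-dimensional core is correctly diagonalized (e.g. by reducing to a small presentation matrix after quotienting out all the collapsing branches), reading off $K_0 = \Z \oplus \Z_2$ and confirming $K_1 = \Z^2$ from the kernel should follow directly.
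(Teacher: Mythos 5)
Your proposal follows essentially the same route as the paper: $K_{1}$ is read off as the rank-two kernel from Lemma \ref{lem:kernelsumorthogonal}, and $K_{0}$ is the cokernel of $d_{u}\oplus d_{v}$, computed by using Lemma \ref{lem:imagesu2} (and its $d_{v}$-analogue with the recursion $b_{k+1}=b_{k-1}-nb_{k}$) to reduce every $p(e_{w})$ to a multiple of $p(e_{\emptyset})$ or $p(e_{u})$, after which the only surviving relation is $2p(e_{\emptyset})=2p(e_{u})$, giving $\Z\oplus\Z_{2}$. The delicate point you flag --- that the $\Z_{2}$ from $2\Z(e_{\emptyset}-e_{u})$ is not destroyed by $d_{v}$ --- is settled in the paper exactly as you anticipate, by observing that $e_{u}$ does not lie in the image of $d_{v}$ (since $d_{v}(f_{u})=0$).
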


\begin{proof}
The computation of $K_{1}(C(\HH_{q}))$ follows from Lemma \ref{lem:kernelsumorthogonal}. To compute $K_{0}(C(\HH_{q}))$, we first have to compute the image of the two maps. For $d_{u}$ this was done in Lemma \ref{lem:imagesu2}. For $d_{v}$ a similar computation shows that the image is spanned by the vectors $\eta_{k} = e_{wv^{k+1}} - b_{k}e_{w}$ where $w$ is a (possibly empty) word ending in $\Irr(SU_{q}(2))\setminus\{\varepsilon_{SU_{q}(2)}\}$ and $b_{k}$ is defined by $b_{0} = n$, $b_{1} = n^{2}-1$ and $b_{k+1} = nb_{k} - b_{k-1}$.

Let us denote by $p$ the canonical surjection onto the coimage of $d_{u}\oplus d_{v}$. If $w = w'u^{k}$ is a word of length at least two, then
\begin{equation*}
p(e_{w}) = a_{k-1}p(e_{w'})
\end{equation*}
by Lemma \ref{lem:imagesu2}. Similarly, if $w = w'v^{k}$ then $p(e_{w}) = b_{k-1}p(e_{w'})$. Thus, the coimage is spanned by the vectors $p(e_{w})$ for $w$ of length at most one. Since $p(e_{v^{k}}) = b_{k-1}p(e_{\emptyset})$, we even have to consider only $p(e_{\emptyset})$ and $p(e_{u})$ so that the coimage has rank at most two. Since $e_{u}$ is not in the image of $d_{v}$, the only relation between $p(e_{\emptyset})$ and $p(e_{u})$ is the one coming from $d_{u}$, namely $2p(e_{\emptyset}) = 2p(e_{u})$. Hence, the coimage is $\Z\oplus\Z_{2}$ generated by $p(e_{\emptyset} + e_{u})$ and $p(e_{\emptyset} - e_{u})$.
\end{proof}

\subsection{Free unitary quantum groups}

The previous result can easily be extended to free unitary quantum groups and even to arbitrary free products of free orthogonal and free unitary quantum groups. Indeed, let
\begin{equation*}
\G = U_{P_{1}}^{+}\ast\cdots\ast U_{P_{k}}^{+}\ast O_{Q_{1}}^{+}\ast\cdots\ast O_{Q_{l}}^{+}
\end{equation*}
where $P_{i}\in GL_{m_{i}}(\C)$ with $m_{i}\geqslant 2$ and $Q_{j}\in GL_{n_{j}}(\C)$ with $n_{j}\geqslant 2$ satisfies $Q_{j}\overline{Q_{j}} = \pm\id$. It is known from \cite{vergnioux2013k} that if $v_{i}, u_{j}$ denote the fundamental representations of $U_{P_{j}}^{+}$ and $O_{Q_{j}}^{+}$ respectively, then
\begin{equation*}
\delta = \left(\bigoplus_{i}(T_{v_{i}} - \dim(v_{i})\id)\oplus (T_{\overline{v}_{i}}-\dim(\overline{v}_{i})\id)\right)\oplus\left(\bigoplus_{j}T_{u_{j}} - \dim(u_{j})\id\right)
\end{equation*}
yields a projective resolution
\begin{equation*}
0 \longrightarrow C_{0}(\widehat{G})^{\oplus 2k+l} \overset{\delta}{\longrightarrow} C_{0}(\widehat{G}) \overset{\lambda}{\longrightarrow} \C \longrightarrow 0.
\end{equation*}

The same computations as before then yield

\begin{prop}
Let $\G$ be a free product of free orthogonal and unitary quantum groups as above. Then, $K_{0}(C(\HH_{q})) = \Z\oplus \Z_{2}$ and $K_{1}(C(\HH_{q})) = \Z^{2k+l+1}$.
\end{prop}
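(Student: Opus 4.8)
The plan is to run the very same machine as in the orthogonal case, the only genuinely new input being the behaviour of the descent maps attached to the unitary factors. Exactly as there, the projective resolution produces an exact triangle in $KK^{\widehat{\HH}_{q}}$, and applying $K(\widehat{\HH}_{q}\ltimes\cdot)$ together with the decomposition $C_{0}(\widehat{G}) = A_{u}\oplus\bigoplus_{w}A_{w}$ turns the problem into linear algebra over $\Z$: the groups $K_{1}(C(\HH_{q}))$ and $K_{0}(C(\HH_{q}))$ are respectively the kernel and the cokernel of the total boundary map
\begin{equation*}
d = d_{u}\oplus\bigoplus_{i}(d_{v_{i}}\oplus d_{\overline{v}_{i}})\oplus\bigoplus_{j}d_{u_{j}}
\end{equation*}
acting from $2k+l+1$ copies of $\bigl(\bigoplus_{w}\Z_{w}\bigr)\oplus\Z_{u}$ to a single such copy, where $d_{x} = \partial_{x} - \dim(x)\id$ and $\partial_{x} = j(T_{x})$.

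First I would record the descent formulas for each fundamental representation. The contribution of $SU_{q}(2)$ is Proposition \ref{prop:descentsuq} and that of each orthogonal factor $O_{Q_{j}}^{+}$ is Proposition \ref{prop:descentorthogonal}; for the unitary factors one computes $\partial_{v_{i}}$ and $\partial_{\overline{v}_{i}}$ by the same Morita-equivalence argument, the tridiagonal pattern being replaced by the asymmetric free fusion rules of $U_{P_{i}}^{+}$ since $v_{i}$ is not self-conjugate. The single uniform fact I really need is that the torsion block $A_{u}$ is a $\dim(x)$-eigenspace for every $\partial_{x}$ with $x$ a fundamental representation of $\G$: this is precisely the third bullet of Proposition \ref{prop:descentorthogonal}, and it holds in general because, by Lemma \ref{lem:canonicaltorsion}, $\widehat{\HH}_{q}\ltimes A_{u}$ is Morita equivalent to $M_{2}(\C)$, on which $x$ acts faithfully so that only the dimension survives. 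In particular each $d_{x}$ annihilates the torsion vector $e_{u}$ of the copy attached to $x$.

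Next I would compute the kernel exactly as in Lemma \ref{lem:kernelsumorthogonal}. Distinct fundamental representations append distinct letters to a word, so in a relation $d(\cdot)=0$ the maximal-length terms coming from different copies cannot cancel; this forces all words of length $\geqslant 2$, and then all length-one words other than $u$, to carry vanishing coefficients, reducing everything to the finite block spanned by $e_{\emptyset}$, $e_{u}$ and the torsion vectors of the various copies. On this block $d_{u}$ has kernel $\Z(e_{\emptyset}+e_{u})$, while each $\G$-generator $x$ contributes exactly its torsion vector. The kernel is therefore free of rank $1+(2k+l)=2k+l+1$, giving $K_{1}(C(\HH_{q}))=\Z^{2k+l+1}$.

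Finally, for $K_{0}$ I would describe the image of $d$ through the analogue of Lemma \ref{lem:imagesu2}: for each fundamental representation the image is spanned by vectors of the shape $e_{wx^{k+1}}-c_{k}e_{w}$, so in the cokernel every class $p(e_{w})$ with $w$ of length $\geqslant 2$ is a scalar multiple of a shorter one, and likewise each length-one word is proportional to $p(e_{\emptyset})$. The cokernel is thus generated by $p(e_{\emptyset})$ and $p(e_{u})$, and the only relation between them is $2p(e_{\emptyset})=2p(e_{u})$ coming from $d_{u}$, since $e_{u}$ lies in the image of no $d_{x}$ for a $\G$-generator $x$ (such a $d_{x}$ kills the torsion block and sends the remaining blocks away from $A_{u}$). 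This yields $K_{0}(C(\HH_{q}))=\Z\oplus\Z_{2}$, generated by $p(e_{\emptyset}+e_{u})$ and $p(e_{\emptyset}-e_{u})$. I expect the main obstacle to be the unitary descent formula: checking that, despite the non-tridiagonal fusion rules of $U_{P_{i}}^{+}$ and the alternating-word structure inherited from the free product defining $\G$, the leading-term cancellation and the length-reduction of the image both go through unchanged, and that the new structure constants $c_{k}$ never introduce torsion beyond the single $\Z_{2}$.
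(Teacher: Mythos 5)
Your proposal is correct and follows exactly the route the paper intends: the paper's own proof is the single line ``the same computations as before then yield'', and you are carrying out precisely those computations, with the right identification of the kernel (one torsion vector $f_{u}$ per $\G$-generator copy plus $e_{\emptyset}+e_{u}$ from the $SU_{q}(2)$ copy) and of the cokernel ($\Z p(e_{\emptyset})\oplus\Z p(e_{u})$ modulo the single relation $2p(e_{\emptyset})=2p(e_{u})$). One small point in your favour: you correctly set up the resolution with $2k+l+1$ copies, including the summand $d_{u}=\partial_{u}-2\id$ for the $SU_{q}(2)$ factor, which the paper's displayed $\delta$ and exponent $2k+l$ appear to omit --- presumably a typo, since the stated rank of $K_{1}$ requires it.
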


\subsection{Classical free groups}

Our third and last example will be free groups $\G = \widehat{\F}_{n}$ on finitely many generators. The case $n=1$ is particularly interesting since the corresponding free wreath product $\widehat{\Z}\wr_{\ast}S_{N}^{+}$ is isomorphic to the quantum reflection group $H_{N}^{\infty +}$. The resolution given in \cite{vergnioux2013k} is not valid if we replace $U_{n}^{+}$ by a free groups, but it is easy to modify it. Keeping the previous notations and denoting by $a_{1}, \cdots, a_{n}$ the generators of $\F_{n}$, we consider the complex
\begin{equation*}
0 \longrightarrow C_{0}(\widehat{G})^{\oplus n+1} \overset{\delta}{\longrightarrow} C_{0}(\widehat{G}) \overset{\lambda}{\longrightarrow} \C \longrightarrow 0.
\end{equation*}
where $\delta = (T_{a_{1}} - \id)\oplus\cdots\oplus (T_{a_{n}} - \id)\oplus (T_{u} - 2\id)$. We need to prove that this is a $\mathfrak{J}$-projective resolution, where $\mathfrak{J}$ is the kernel of the restriction functor $KK^{\widehat{\F}_{N}\ast SU_{q}(2)}\to KK$. By definition, it is enough to check exactness when applying the functor $KK^{\widehat{\F}_{N}\ast SU_{q}(2)}(C_{0}(\widehat{\F}_{N}\ast SU_{q}(2)), \cdot)$. This yields the diagram
\begin{equation}\label{eq:integersfreeproduct}
0 \longrightarrow R_{G}^{\oplus n+1} \overset{d}{\longrightarrow} R_{G} \overset{\varepsilon}{\longrightarrow} \Z \longrightarrow 0
\end{equation}
with $d = d_{a_{1}^{-1}}\oplus \cdots\oplus d_{a_{n}^{-1}}\oplus d_{u}$ and $\varepsilon$ is the map induced by the dimension function.

\begin{lem}
The diagram \eqref{eq:integersfreeproduct} is an exact sequence.
\end{lem}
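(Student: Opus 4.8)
The plan is to verify the three properties making \eqref{eq:integersfreeproduct} exact: that $\varepsilon$ is onto, that $\ker\varepsilon$ equals the image of $d$, and that $d$ is injective. Throughout I identify $R_{G}$ with the free product based ring of $R_{\widehat{\F}_{n}} = \Z[\F_{n}]$ and $R_{SU_{q}(2)} = \Z[u]$ (the polynomial ring generated by $u = u^{1}$), so that a $\Z$-basis of $R_{G}$ is given by the alternating words in $\F_{n}\setminus\{1\}$ and $\{u^{k} : k\geqslant 1\}$; I write $\varepsilon_{0}$ for the empty word (the unit of $R_{G}$) and $\ell$ for the length of a word. Here $\varepsilon$ is the dimension function, a ring homomorphism with $\varepsilon(a_{i}^{-1}) = 1$ and $\varepsilon(u) = 2$, while $d_{a_{i}^{-1}}(y) = y(a_{i}^{-1}-1)$ and $d_{u}(y) = y(u-2)$. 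Surjectivity of $\varepsilon$ is clear since it sends $\varepsilon_{0}$ to $1$, and $\varepsilon\circ d = 0$ because $\varepsilon(a_{i}^{-1}-1) = 0 = \varepsilon(u-2)$; hence the image of $d$ is contained in $\ker\varepsilon$, which already gives exactness at $\Z$.

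Next I would prove the reverse inclusion. The image of $d$ is the left ideal $L$ generated by $a_{1}^{-1}-1,\dots,a_{n}^{-1}-1,u-2$. First, $L$ contains $a_{i}-1 = -a_{i}(a_{i}^{-1}-1)$ and, by induction on the reduced length of $g$ using $gh-1 = g(h-1)+(g-1)$ together with the stability of $L$ under left multiplication, it contains $g-1$ for every $g\in\F_{n}$. Second, since $R_{SU_{q}(2)} = \Z[u]$ is a domain with $\ker(\varepsilon|_{\Z[u]}) = (u-2)$, every element $u^{k}-(k+1)$ lies in $(u-2)\subseteq L$. Finally, for an arbitrary basis word $w = x_{1}w'$ one writes
\begin{equation*}
w - \dim(w)\,\varepsilon_{0} = x_{1}\bigl(w' - \dim(w')\,\varepsilon_{0}\bigr) + \dim(w')\bigl(x_{1} - \dim(x_{1})\,\varepsilon_{0}\bigr),
\end{equation*}
where the first summand lies in $L$ by left-ideal stability and induction on length, and the second by the length-one cases just treated. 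Since any element of $\ker\varepsilon$ is a $\Z$-combination of the $w - \dim(w)\varepsilon_{0}$, it lies in $L$, so $\ker\varepsilon = L$.

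The injectivity of $d$ is the heart of the argument. The structural input is that $R_{G}$ is free as a right $R_{\widehat{\F}_{n}}$-module on the set $B_{1}$ consisting of the words ending in $SU_{q}(2)$ together with $\varepsilon_{0}$, and free as a right $R_{SU_{q}(2)}$-module on the set $B_{2}$ consisting of the words ending in $\F_{n}$ together with $\varepsilon_{0}$; both facts follow from the unique factorization of alternating words. Combined with the facts that $\ker(\varepsilon|_{\Z[\F_{n}]})$ is free as a left module on $\{a_{i}^{-1}-1\}$ (equivalently $\{a_{i}-1\}$, because $\F_{n}$ is free) and that $u-2$ is a non-zero-divisor, this lets me rewrite $d(\vec{y}) = \sum_{b\in B_{1}} b\,\xi_{b} + \sum_{b'\in B_{2}} b'\,\eta_{b'}$ with $\xi_{b}\in\ker(\varepsilon|_{\Z[\F_{n}]})$ and $\eta_{b'}\in(u-2)$, and with $\vec{y}$ corresponding bijectively to the families $(\xi_{b}),(\eta_{b'})$. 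I then run a downward induction on the maximal length $L$ of an indexing word with non-zero coefficient: the length-$(L+1)$ part of $d(\vec{y})$ consists of the words $bg$ with $\ell(b) = L$ and $g\neq 1$ (ending in $\F_{n}$) and the words $b'u^{k}$ with $\ell(b') = L$ and $k\geqslant 1$ (ending in $SU_{q}(2)$), and unique factorization shows these are pairwise distinct basis elements lying in two disjoint families. Hence $d(\vec{y}) = 0$ forces all these coefficients to vanish, after which the augmentation conditions on $\xi_{b}$ and $\eta_{b'}$ kill the remaining coefficients of the length-$L$ terms; the induction descends to $L = 0$, where $\Z[\F_{n}]\cap\Z[u] = \Z\,\varepsilon_{0}$ closes the argument. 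The main obstacle is precisely this last step: the genuine work is the normal-form bookkeeping ruling out cross-cancellation between the two families of words, whereas the surjectivity of $\varepsilon$, the containment of the image in $\ker\varepsilon$, and the generation argument are routine.
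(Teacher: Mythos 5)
Your proof is correct. It follows the same two-pronged strategy as the paper --- a leading-term argument for the injectivity of $d$ and an induction on word length for exactness at $R_{G}$ --- but it runs on different structural inputs, and the comparison is instructive. For exactness in the middle, the paper reduces $p(x_{w})$ to $p(x_{w'})$ through explicit recursions tailored to $d_{u}$ and $d_{a_{l}^{-1}}$, whereas you first observe that the image of $d$ is the left ideal generated by the $a_{i}^{-1}-1$ and $u-2$ and then combine the standard augmentation-ideal identity $gh-1=g(h-1)+(g-1)$, the equality $\ker(\varepsilon|_{\Z[u]})=(u-2)$, and a two-term telescoping along alternating words; this is shorter and makes the step essentially routine. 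For injectivity, the paper's maximal-word argument contains an inline re-proof (the discussion of the last $\F_{n}$-letter $\gamma$ of maximal reduced length) of the fact that the augmentation ideal of $\Z[\F_{n}]$ is free as a left module on $\{a_{i}-1\}$; you instead quote that standard fact, together with the freeness of $R_{G}$ as a right module over each factor (valid by unique factorization of alternating words) and the non-zero-divisor property of $u-2$ in $\Z[u]$, after which the only combinatorial content left is that the length-$(L+1)$ words $bg$ and $b'u^{k}$ are pairwise distinct basis elements split into two disjoint families according to their last letter --- again immediate from unique factorization. Your organization has the merit of isolating exactly where freeness of $\F_{n}$ enters (which matches the paper's closing remark that any relation in $\Gamma$ produces elements of $\ker d$), at the cost of invoking rather than proving the Fox-derivative freeness of the augmentation ideal; the paper's argument is self-contained, but the corresponding case analysis is the most delicate part of its proof. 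One point you rightly flag and should keep explicit in a final write-up: recovering $\vec{y}=0$ from the vanishing of the $\xi_{b}$ and $\eta_{b'}$ uses the freeness of the augmentation ideal on $\{a_{i}^{-1}-1\}$, not merely that these elements generate it.
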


\begin{proof}
The surjectivity of $\varepsilon$ is clear, let us prove that $d$ is injective. We will denote by $e_{w}^{k}$ the basis element corresponding to the representation $w$ in the $k$-th copy of $R_{G}$. Assume that $x$ is a finite linear combination of such elements in the kernel of $d$ and let $w$ be a word of maximal length appearing in $x$. If it appears in the last component, then either $w$ ends in $\F_{n}$ and its image under $d$ contains $wu$, or it ends with some $u^{k}$ and taking $k$ maximal, its image under $d$ contains the same word but ending with $u^{k+1}$. In both, cases, we get a contradiction by the same argument is in Lemma \ref{lem:kernelsumorthogonal}. We may therefore assume that $w$ appears in one of the first $n$ components, say the first one (the other cases being similar). If $w$ ends with $u^{k}$, then again we get a longer word after applying $d$ which cannot be simplified. Thus, $w = w'\gamma$ for some $\gamma\in F_{n}$ and we may assume that the word length of $\gamma$ is maximal. Then, still by the same argument $\gamma$ must end with $a_{1}$ otherwise we would get a longer word. But then, $d(w\gamma) = w\gamma a_{1}^{-1} - w\gamma$ so that $w\gamma$ appears in $d(x)$. This means that there must be terms in $x$ whose images simplifies with $w\gamma$, but because we are working with free groups such a term must be of the form $w(\gamma a_{i})$, contradicting the maximality of the word length of $\gamma$. As a conclusion, $d$ is injective.

The vectors $x_{w} = w - \dim(w)$ for a basis of the kernel of $\varepsilon$ and we now have to compute their image in the quotient by the image of $d$ (which is clearly contained in $\ker(\varepsilon)$). Let us consider a non-empty word $w$. If $w = w'u^{k}$, the equality
\begin{equation*}
w'u^{k} = d_{u}(w'u^{k-1}) - w'u^{k-2} + 2w'u^{k-1}
\end{equation*}
(with the convention $w'u^{-1} = 0$) yields
\begin{align*}
x_{w} & = d_{u}(w'u^{k-1}) - p(x_{w'u^{k-2}}) + 2p(x_{w'u^{k-1}}) - \dim(w'u^{k-2}) + 2\dim(w'u^{k-1}) - \dim(w'u^{k}) \\
& = d_{u}(w'u^{k-1}) - p(x_{w'u^{k-2}}) + 2p(x_{w'u^{k-1}})
\end{align*}
so that $p(x_{w}) \in \Z p(w'u^{k-2})\oplus\Z p(w'u^{k-1})$. Applying this inductively, we see that $p(w)\in \Z p(w')$. If $w = w'a_{l}^{k}$, then using
\begin{equation*}
w'a_{l}^{k} - w'a_{l}^{k+1} = d_{a_{l}^{-1}}(w'a_{l}^{k+1})
\end{equation*}
we see that we can increase or decrease $k$ depending on its sign until we get $p(w)\in \Z p(w')$. We have shown that all the basis elements have the same image. Since $x_{u}$ is the image of the trivial representation, its image is $0$ and the proof is complete.
\end{proof}

We can now apply the same strategy as before : we restrict the resolution to $\widehat{\HH}_{q}$ and compute the corresponding six-term exact sequence. This yields

\begin{prop}
Let $\G = \widehat{\F}_{n}$. Then, $K_{0}(C(\HH_{q})) = \Z\oplus \Z_{2}$ and $K_{1}(C(\HH_{q})) = \Z^{n+1}$.
\end{prop}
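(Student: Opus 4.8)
The plan is to transport the projective resolution just established into $KK^{\widehat{\HH}_q}$ and read off the K-theory from the resulting six-term sequence, exactly as in the case $\G = O_n^+$. Restricting the exact triangle attached to the resolution and applying the functor $K(\widehat{\HH}_q \ltimes \cdot)$, one uses that the crossed product of each block $A_w$ (and of $A_u$) by $\widehat{\HH}_q$ is isomorphic to the compact operators, so that its odd K-theory vanishes and its even K-theory is a single copy of $\Z$. The six-term sequence then collapses to
\begin{equation*}
0 \longrightarrow K_1(C(\HH_q)) \longrightarrow \left(\bigoplus_{w}\Z_w \oplus \Z_u\right)^{\oplus n+1} \overset{d}{\longrightarrow} \bigoplus_{w}\Z_w \oplus \Z_u \longrightarrow K_0(C(\HH_q)) \longrightarrow 0,
\end{equation*}
where $d = d_{a_1^{-1}} \oplus \cdots \oplus d_{a_n^{-1}} \oplus d_u$, so that $K_1 = \ker d$ and $K_0 = \operatorname{coker} d$.

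First I would record the analogue of Proposition \ref{prop:descentorthogonal} for the one-dimensional generators. Since each $a_l$ is one-dimensional and invertible, with $a_l \otimes a_l^k = a_l^{k+1}$ a single summand and no Jacobi-type recursion, the same Morita-equivalence computation as in Propositions \ref{prop:descentsuq} and \ref{prop:descentorthogonal} should show that $j(T_{a_l})$ acts on the generic blocks as a pure shift of the rightmost free-group syllable, while on the block $A_u$ it acts by the scalar $\dim(a_l) = 1$ (invoking Lemma \ref{lem:canonicaltorsion} exactly as in the orthogonal case). In particular $d_{a_l^{-1}}(f_u) = 0$ for every $l$, which is the one feature that controls the kernel.

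Granting this, the kernel computation mirrors Lemma \ref{lem:kernelsumorthogonal}: for a tuple whose longest word $w$ has positive length, one of the $n+1$ summands produces a leading term --- $wu^{k+1}$ for the $d_u$-component, or a word with one more free-group syllable for a shift component --- that is distinguishable from the leading terms of all the other summands and hence cannot cancel, forcing that term to vanish. Iterating reduces everything to the finite-rank piece spanned by $e_\emptyset$, $e_u$ and the $f^{(l)}_u$, where the only relations are $d_u(e_\emptyset + e_u) = 0$ and $d_{a_l^{-1}}(f^{(l)}_u) = 0$; this gives $\ker d = \Z(e_\emptyset + e_u) \oplus \bigoplus_{l=1}^n \Z f^{(l)}_u$, whence $K_1 = \Z^{n+1}$. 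For the cokernel, each shift relation $p(e_{wa_l}) = p(e_w)$ together with the $SU_q(2)$-recursion of Lemma \ref{lem:imagesu2} collapses every $p(e_w)$ onto $p(e_\emptyset)$ or $p(e_u)$; as $e_u$ lies in the image of no shift component, the unique surviving relation is $2p(e_\emptyset) = 2p(e_u)$ coming from $d_u(e_\emptyset)$, so $K_0 = \operatorname{coker} d = \Z \oplus \Z_2$, generated by $p(e_\emptyset + e_u)$ and $p(e_\emptyset - e_u)$.

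The main obstacle I expect is combinatorial bookkeeping rather than anything conceptual: making the shift description of $j(T_{a_l})$ precise with respect to the $\widehat{\HH}_q$-orbit decomposition, in particular tracking how appending $a_l$ to a word ending in $\Irr(\G) = \F_n$ interacts with reduction in the free group and with the identification of the block $A_u$. This is the same phenomenon that forced the delicate maximality argument in the proof that \eqref{eq:integersfreeproduct} is exact, and the kernel argument here must likewise be arranged so that free-group reduction never shortens the chosen leading word. Once the descent formula for the $a_l$ is pinned down, both the kernel and cokernel computations are routine adaptations of the orthogonal case.
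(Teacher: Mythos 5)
Your proposal is correct and follows essentially the same route as the paper, which at this point simply says ``apply the same strategy as before'' --- restrict the resolution to $\widehat{\HH}_{q}$, take the six-term sequence, and compute $\ker d$ and $\operatorname{coker} d$ by the leading-word and collapsing arguments of Lemmas \ref{lem:imagesu2} and \ref{lem:kernelsumorthogonal}. Your explicit descent formulas for the one-dimensional generators (in particular $d_{a_l^{-1}}(f_u)=0$, which supplies the $n$ extra kernel generators) and your caveat about free-group reduction when appending $a_l$ are exactly the points the paper leaves implicit.
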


If $\Gamma$ is a torsion-free group which is not free, then the above sequence does not work anymore. Indeed, any relation in $\Gamma$ will produce elements in the kernel of $d$ so that its injectivity fails. For instance if we consider $\Z^{2}$ instead of $\F_{2}$, then
\begin{equation*}
d_{a^{-1}}(a - ab) = d_{b^{-1}}(aba^{-1} - ab).
\end{equation*}
One way to go round this problem could be to build a longer projective resolution. The price to pay then is that the exact sequence in K-theory will not be obtained directly but through a spectral sequence computation.

\bibliographystyle{amsplain}
\bibliography{Freslon_Martos}

\end{document}